\newcounter{results}[section] 
\theoremstyle{plain}
\newtheorem{theorem}[results]{Theorem}
\newtheorem{lemma}[results]{Lemma}
\newtheorem{proposition}[results]{Proposition}
\newtheorem*{theorem*}{Theorem}
\newtheorem*{lemma*}{Lemma}
\newtheorem*{proposition*}{Proposition}
\newtheorem*{corollary*}{Corollary}
\newtheorem*{exercise*}{Exercise}
\newtheorem*{fact*}{Fact}
\theoremstyle{remark}
\newtheorem{remark}[results]{Remark}
\newtheorem*{remark*}{Remark}
\newtheorem*{question*}{Question}
\theoremstyle{definition}
\newtheorem{definition}[results]{Definition}
\newtheorem*{definition*}{Definition}
\newtheorem*{example*}{Example}
\numberwithin{equation}{section}
\crefname{figure}{Figure}{Figures}
        \renewcommand{\comma}{\ensuremath{\, \text{, }}}
        \newcommand{\comma}{\ensuremath{\, \text{, }}}
\newcommand{\N}{\ensuremath{\mathbb N}}
\newcommand{\Z}{\ensuremath{\mathbb Z}}
\newcommand{\R}{\ensuremath{\mathbb R}}
\DeclarePairedDelimiter\abs{\lvert}{\rvert} 
\DeclarePairedDelimiter\norm{\lVert}{\rVert} 
\newcommand{\scal}[2]{\ensuremath{\langle #1 , #2 \rangle}} 
\newcommand{\st}{\ensuremath{\ :\ }} 
\newcommand{\eqdef}{\ensuremath{\coloneqq}} 
\newcommand{\id}{\ensuremath{\mathrm{id}}}
\DeclareMathOperator{\tr}{tr}
\renewcommand{\d}{\ensuremath{\mathrm{d}}} 
\newcommand{\de}{\ensuremath{\, \mathrm{d}}} 
\newcommand{\grad}{\ensuremath{\nabla}} 
\newcommand{\lapl}{\ensuremath{\Delta}} 
\newcommand{\vf}{\ensuremath{\mathfrak X}} 
\DeclareMathOperator{\II}{I\!I} 
\DeclareMathOperator{\Ric}{Ric} 
\newcommand{\Haus}{\ensuremath{\mathscr H}} 
\DeclareMathOperator{\ind}{ind} 
\newcommand{\area}{\ensuremath{\Haus^2}} 
\DeclareMathOperator{\genus}{genus} 
\newcommand{\Diff}{\ensuremath{D}} 
\newcommand{\dih}{\ensuremath{\mathbb D}}
\newcommand{\vard}{\ensuremath{\boldsymbol {\mathrm{F}}}}
\newcommand{\so}{\ensuremath{\boldsymbol{\Sigma}}}
\newcommand{\smetric}{\ensuremath{\gamma}}
\newcommand{\selem}{\ensuremath{h}}
\newcommand{\limitv}{\ensuremath{\Xi}}
\newcommand{\avoids}{\ensuremath{\Theta}}
\colorlet{myGray}{gray}
\colorlet{myBlue}{blue}
\colorlet{myBlack}{black}
\colorlet{myBackground}{gray!10}
\begin{document}

\title[Equivariant index bound for min-max FBMS]{Equivariant index bound for \\ min-max free boundary minimal surfaces}

\author{Giada Franz}
     \address{ \noindent Giada Franz: 
     	\newline ETH D-Math, R\"amistrasse 101, 8092 Z\"urich, Switzerland 
     	 	\newline
     	 \textit{E-mail address: giada.franz@math.ethz.ch} 
}

\begin{abstract}
Given a three-dimensional Riemannian manifold with boundary and a finite group of orientation-preserving isometries of this manifold, we prove that the equivariant index of a free boundary minimal surface obtained via an equivariant min-max procedure à la Simon--Smith with $n$-parameters is bounded above by $n$. 
\end{abstract}

\maketitle

\thispagestyle{empty}

\setcounter{tocdepth}{1} 
\tableofcontents

\section{Introduction}

The aim of this paper is to show that, through an equivariant min-max procedure à la Simon--Smith \cite{Smith1982} and Colding--De Lellis \cite{ColdingDeLellis2003} in a three-dimensional manifold with boundary, it is possible to obtain a free boundary minimal surface with equivariant index bounded above by the number of parameters of the sweepouts.

The min-max theory developed by Simon--Smith and Colding--De Lellis differs from the one by Almgren--Pitts \cite{Almgren1965}, \cite{Pitts1981} and Marques--Neves \cite{MarquesNeves2014}, \cite{MarquesNeves2017} since, in the former one, stronger regularity and convergence conditions are imposed on the sweepouts (cf. \cite{MarquesNeves2014}*{Section 2.11}) and the ambient dimension is assumed to be equal to three. 
This makes the Simon--Smith approach less flexible; indeed, note for example that the sweepout constructed by Marques and Neves in \cite{MarquesNeves2014} to prove the Willmore conjecture does not satisfy the Simon--Smith assumptions.

On the other hand, the stronger regularity makes proofs easier and enables to obtain a partial control on the topology of the resulting surface. This has been an advantage of the Simon--Smith approach when one wants to construct a minimal surface with a certain topological type. Indeed, the only way we know so far to obtain some information on the topology of the resulting minimal hypersurface in the Almgren--Pitts min-max theory is to first control the index and then use that the index bounds the topology (see for example \cite{AmbCarSha18-Index},
\cite{Maximo2018}, \cite{Song2020} and \cite{AmbCarSha18-IndexFBMS} for the free boundary case), although such bounds (even when effective) are far from sharp.

Luckily, many proofs working in the Almgren--Pitts setting can be adapted to the Simon--Smith one. This is also the case for the upper bound on the index, proved in \cite{MarquesNeves2016} for the Almgren--Pitts min-max theory (see also \cite{MarquesNeves2021}, and \cite{GuangLiZhou2021} for the free boundary setting).
Indeed, this paper consists in adapting those arguments to our ``smoother'' setting (see \cite{MarquesNeves2016}*{Section 1.3}), but with the additional presence of a group of isometries imposed to the min-max procedure.
What we obtain is an upper bound on the \emph{equivariant index} of the surface resulting from this \emph{equivariant min-max procedure}.
Let us now describe precisely the statement of this result. To this purpose, we first give some preliminary definitions.

\subsection{Setting and definitions} 
The equivariant min-max theory considered in this paper has been mostly developed by Ketover in \cite{Ketover2016equiv} for the closed case and in \cite{Ketover2016fb} for the free boundary one, as proposed by Pitts--Rubinstein in \cite{PittsRubinstein1988}. One of its primary goals is to construct new families of minimal surfaces in $S^3$ and of free boundary minimal surfaces in $B^3$. Indeed, encoding the right symmetry group in the min-max procedure allows us to produce surfaces with fully controlled topology, as done by the author in joint work with Carlotto and Schulz in \cite{CarlottoFranzSchulz2020}.

Our setting is a three-dimensional compact Riemannian manifold $(M^3,\smetric)$ with strictly mean convex boundary and with a \emph{finite} group $G$ of orientation-preserving isometries of $M$.

\begin{remark}
We assume the manifold $M$ to have strictly mean convex boundary because we want it to satisfy the following property.
\begin{description}
\item [$(\mathfrak{P})$]\label{HypP} \hspace{1.2ex} If $\Sigma^2\subset M$ is a smooth, connected, complete (possibly noncompact), embedded surface with zero mean curvature which meets the boundary of the ambient manifold orthogonally along its own boundary, then $\partial\Sigma = \Sigma\cap\partial M$. 
\end{description}
Here we ask for more, namely, the strict mean convexity of the boundary of the ambient manifold (which implies \hyperref[HypP]{$(\mathfrak{P})$}), because we want it to be an open property with respect to the class of smooth metrics. 
\end{remark}

\begin{remark} \label{rem:NotProperInLi}
Li in \cite{Li2015} studies the free boundary min-max problem without curvature assumptions on the ambient manifold $M$. However, as remarked in \cite{CarlottoFranz2020}*{Appendix A} (see also \cite{CarlottoFranz2020}*{pp. 5}), there are problems in defining the Morse index for free boundary minimal surfaces that are not properly embedded in $M$. Moreover, the compactness results in \cite{AmbCarSha18Compactness}, which we use for example in \cref{sec:BumpyIsGGeneric}, need property \hyperref[HypP]{$(\mathfrak{P})$}.
That is another good reason why we require \hyperref[HypP]{$(\mathfrak{P})$} to hold in this paper.
\end{remark}

\begin{remark}
Here we assume the group $G$ to be finite, but it is due mentioning that the equivariant min-max theory has been developed also in the case of a compact \emph{connected} Lie group with cohomogeneity not $0$ or $2$ in \cite{Liu2021} and in the Almgren--Pitts setting with a compact Lie group of cohomogeneity greater or equal than $3$ in \cite{Wang2020}.
\end{remark}

In what follows, we denote by $I^n=[0,1]^n$ the product of $n$-copies of the unit interval. Moreover, when we say that $\Sigma^m$ is a \emph{hypersurface} (or \emph{surface} if $m=2$) in an ambient manifold $M^{m+1}$, we mean that $\Sigma$ is smooth, complete and properly embedded, i.e., $\partial\Sigma=\Sigma\cap\partial M$.

\begin{definition} \label{def:GSweepout}
A family $\{\Sigma_t\}_{t\in I^n}$ of subsets of a three-dimensional Riemannian manifold $M$ is said to be a \emph{generalized family} of surfaces if there are a finite subset $T$ of $I^n$ and a finite set of points $P$ of $M$ such that:
\begin{enumerate}[label={\normalfont(\roman*)}]
    \item $t\mapsto\Sigma_t$ is continuous in the sense of varifolds;
    \item $\Sigma_t$ is a surface for every $t\not\in T\cup \partial I^n$;
    \item for $t\in T\setminus\partial I^n$, $\Sigma_t$ is a surface in $M\setminus P$.
\end{enumerate}

Moreover, the generalized family $\{\Sigma_t\}_{t\in I^n}$ is said to be \emph{smooth} if it holds also that:
\begin{enumerate}[resume*]
    \item $t\mapsto\Sigma_t$ is smooth for $t\not\in T\cup\partial I^n$;
    \item for $t\in T\setminus \partial I^n$, $\Sigma_s\to\Sigma_t$ smoothly in $M\setminus P$ as $s\to t$.
\end{enumerate}

Finally, we say that $\{\Sigma_t\}_{t\in I^n}$ is a \emph{$G$-sweepout} if it is a smooth generalized family of surfaces and $\Sigma_t$ is $G$-equivariant for all $t\in I^n$, i.e., $h(\Sigma_t) = \Sigma_t$ for all $h\in G$ and $t\in I^n$.
\end{definition}

\begin{remark}
Slightly different variations of this definition are given in several references (see for example \cite{ColdingDeLellis2003}*{Definition 1.2}, \cite{DeLellisPellandini2010}*{Definition 0.5}, \cite{ColdingGabaiKetover2018}*{pp. 2836}). 
In order to have the regularity and the index bound on the resulting surface, it is sufficient to consider generalized families of surfaces, however smoothness is needed for the genus bound (see \cite{DeLellisPellandini2010}*{Definition 0.5}).
\end{remark}

\begin{definition} \label{def:GEquivIsotopy}
We say that a smooth map $\Phi:I^n\times M\to M$ is a \emph{$G$-equivariant isotopy} if $\Phi_t\eqdef \Phi(t,\cdot) \in \operatorname{Diff}_G(M)$ for all $t\in I^n$, namely $\Phi_t\colon M\to M$ is a diffeomorphism such that $\Phi_t\circ h = \Phi_t$ for all $h\in G$, $t\in I^n$.
\end{definition}

\begin{definition}
Given a $G$-sweepout $\{\Sigma_t\}_{t\in I^n}$, we define its \emph{$G$-saturation} as
\[
\Pi \eqdef \{ \{\Phi_t(\Sigma_t)\}_{t\in I^n} \st \text{$\Phi\colon I^n\times M\to M$ is a $G$-equivariant isotopy with $\Phi_t=\id$ for $t\in \partial I^n$} \}.
\]
Then the \emph{min-max width} of $\Pi$ is defined as
\[
W_\Pi \eqdef \inf_{\{\Lambda_t\}\in \Pi} \sup_{t\in I^n} \area(\Lambda_t).
\]
\end{definition}
\begin{remark}
Note that here we consider isotopies $\Phi$ such that $\Phi_t\colon M\to M$ is a diffeomorphism of the ambient manifold. This choice does not coincide with the definition in \cite{Li2015}, where \emph{outer isotopies} are considered (see also \cref{rem:NotProperInLi}).
\end{remark}
\begin{remark}
Note that the uniform bound on the ``bad points'' (the points belonging to $P$ in \cref{def:GSweepout}), which is required in \cite{ColdingDeLellis2003}*{Remark 1.3} (see also \cite{DeLellisPellandini2010}*{Remark 0.2}), is trivially satisfied for the $G$-saturation of a $G$-sweepout.
\end{remark}

\subsection{Main result}
We obtain the following equivariant min-max theorem, which is the combination of regularity, genus bound and equivariant index bound, the latter being the main object of this paper. Indeed, the regularity and the genus bound were addressed in \cite{Ketover2016fb}*{Theorem 3.2} (see also \cite{Ketover2016equiv}*{Theorem 1.3}, where Ketover dealt with the closed case, for which many arguments are similar).

\begin{theorem} \label{thm:EquivMinMax}
Let $(M^3, \smetric)$ be a three-dimensional Riemannian manifold with strictly mean convex boundary and let $G$ be a finite group of orientation-preserving isometries of $M$.
Let $\{\Sigma_t\}_{t\in I^n}$ be a $G$-sweepout and let $\Pi$ be its $G$-saturation. Assume that
\[
W_\Pi > \sup_{t\in \partial I^n} \area(\Sigma_t).
\]
Then there exist a minimizing sequence $\{\{\Sigma^j_t\}_{t\in I^n}\}_{j\in\N}\subset \Pi$ (i.e., $\lim_{j\to\infty} \sup_{t\in I^n} \area(\Sigma_t^j) = W_\Pi$) and a sequence $\{t_j\}_{j\in\N}\subset I^n$ such that $\{\Sigma^j = \Sigma^j_{t_j}\}_{j\in\N}$ is a min-max sequence (i.e., $\lim_{j\to\infty} \area(\Sigma^j) = W_\Pi$) converging in the sense of varifolds to $\limitv\eqdef \sum_{i=1}^k m_i\limitv_i$, where $\limitv_i$ are disjoint free boundary minimal surfaces and $m_i$ are positive integers.

Moreover, the $G$-equivariant index of the support $\operatorname{spt}(\limitv) = \bigcup_{i=1}^k \limitv_i$ of $\limitv$ is less or equal than $n$, namely
\[
\ind_G(\operatorname{spt}(\Xi)) = \sum_{i=1}^k \ind_G(\Xi_i) \le n,
\]
and the following genus bound holds
\[
\sum_{i\in\mathcal O} m_i\genus(\limitv_i) + \sum_{i\in \mathcal N} m_i(\genus(\limitv_i)-1) \le \liminf_{j\to+\infty}\liminf_{\tau\to t_j} \genus(\Sigma^j_{\tau}),
\]
where $\mathcal O$ is the set of indices $i\in\{1,\ldots,k\}$ such that $\Xi_i$ is orientable and $\mathcal N$ is the set of indices $i\in\{1,\ldots,k\}$ such that $\Xi_i$ is nonorientable.
\end{theorem}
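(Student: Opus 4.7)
The existence of the min-max sequence and its convergence to a $G$-invariant stationary integral varifold $\limitv=\sum_{i=1}^k m_i\limitv_i$ supported on a disjoint union of free boundary minimal surfaces, together with the stated genus bound, is already contained in the equivariant Simon--Smith theory developed by Ketover. Hence the only new content is the upper bound $\ind_G(\operatorname{spt}(\limitv)) \le n$, which I will prove by contradiction, adapting to the equivariant free boundary setting the strategy of Marques--Neves (and of Guang--Li--Zhou for the free boundary case).

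The first step is a $G$-equivariant bumpy metric theorem: the set of smooth $G$-invariant metrics on $M$ with strictly mean convex boundary for which every $G$-invariant properly embedded free boundary minimal surface is $G$-nondegenerate (i.e., admits no nontrivial $G$-invariant Jacobi field satisfying the Robin boundary condition) is residual. Because the hypothesis $W_\Pi > \sup_{\partial I^n} \area(\Sigma_t)$ is stable under small $G$-invariant perturbations of the metric, because the $G$-equivariant index is lower semicontinuous along converging sequences of minimal surfaces (this is where the compactness results of \cite{AmbCarSha18Compactness} and property \hyperref[HypP]{$(\mathfrak{P})$} enter), and because one can approximate any metric by $G$-bumpy ones, it suffices to prove the index bound under the additional assumption that $\smetric$ is $G$-bumpy. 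Under this hypothesis, each component $\limitv_i$ appearing in the limit is a nondegenerate free boundary minimal surface of integer multiplicity $m_i$.

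The core step is then a $G$-equivariant deformation theorem: for any stationary $G$-invariant integral varifold $V$ supported on a union of $G$-nondegenerate free boundary minimal surfaces with $\ind_G(\operatorname{spt}(V)) \ge n+1$, there exist a varifold neighborhood $\mathcal U$ of $V$, constants $\eta,\delta>0$, and a continuous $(n+1)$-parameter family of $G$-equivariant diffeomorphisms $\{\Psi^s\}_{s\in I^{n+1}}$ of $M$, with $\Psi^s=\id$ for $s\in\partial I^{n+1}$ and supported in a $G$-invariant neighborhood of $\operatorname{spt}(V)$, such that for every $G$-equivariant surface $\Sigma\in \mathcal U$ one has
\[
\sup_{s\in I^{n+1}} \area(\Psi^s(\Sigma)) \le \area(V) - \delta.
\]
These deformations are obtained by diagonalizing the Jacobi operator on the $G$-invariant subspace, picking $n+1$ linearly independent $G$-invariant eigenfields with negative eigenvalues, extending them $G$-equivariantly into a tubular neighborhood of $\operatorname{spt}(V)$ via the normal exponential map, and integrating the resulting $G$-invariant vector fields. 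A $G$-equivariant pull-tight construction, combined with an interpolation over the parameter space, then allows one to glue these local deformations into a competitor sweepout in $\Pi$ with maximum area strictly less than $W_\Pi$, contradicting the definition of the width.

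The principal obstacle I expect is the construction of the equivariant deformation when a component of $\operatorname{spt}(V)$ has nontrivial stabilizer in $G$: all extensions, cutoffs, and spectral decompositions must be performed within the category of $G$-invariant objects, which requires $G$-equivariant tubular neighborhoods and systematic use of averaging over $G$. A secondary difficulty is that the $G$-saturation involves diffeomorphisms of the ambient manifold rather than outer isotopies of the surface, so the variational vector fields defined near $\operatorname{spt}(V)$ must be extended to $G$-equivariant vector fields on all of $M$, vanishing away from the support and in particular near the finitely many ``bad points'' of the generalized family.
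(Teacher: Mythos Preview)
Your overall architecture---reduce to a $G$-bumpy metric, prove a deformation theorem that pushes the minimizing sequence away from high-index configurations, then pass to the limit in the metric using lower semicontinuity of the equivariant index---matches the paper's strategy. The gap is in your formulation and use of the deformation theorem.

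As stated, your deformation theorem is impossible: you require $\Psi^s=\id$ for $s\in\partial I^{n+1}$, so for any $\Sigma\in\mathcal U$ you have $\sup_{s}\area(\Psi^s(\Sigma))\ge\area(\Sigma)$, and for $\Sigma$ varifold-close to $V$ this is close to $\area(V)$, not $\le \area(V)-\delta$. More importantly, even a correct local deformation near a single $V$ cannot produce a sweepout in $\Pi$ with $\sup_t\area<W_\Pi$: slices $\Sigma_t^j$ with area near $W_\Pi$ but varifold-far from $V$ are untouched by any deformation supported near $\operatorname{spt}(V)$, so no contradiction to the width follows. You are missing the mechanism by which the $n+1$ negative directions are actually exploited against an $n$-parameter sweepout.

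What the paper proves (following Marques--Neves) is different in both statement and conclusion. One builds a family $\{F_v\}_{v\in\overline B^{n+1}}\subset\operatorname{Diff}_G(M)$ with $F_0=\id$ such that for every $W$ close to $\Theta$ the function $v\mapsto \norm{(F_v)_\#W}(M)$ is strictly concave with a unique maximum $m(W)$ near $0$. Since the sweepout has only $n$ parameters, transversality gives, for each $j$, a perturbation $a_j(t)\ne m(\Sigma_t^j)$; flowing in $\overline B^{n+1}$ by $-\nabla A^{\Sigma_t^j}$ from $a_j(t)$ then moves $\Sigma_t^j$ out of a fixed $\vard$-ball around $\Theta$ while keeping the sequence \emph{minimizing} (not sub-width). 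The output is another minimizing sequence whose slices avoid $\overline B^{\vard}_\varepsilon(\Theta)$; no contradiction is sought at this stage. One then uses $G$-bumpiness to make the set $\mathcal V^{n+1}$ of width-$W_\Pi$ varifolds with $\ind_G\ge n+1$ \emph{countable}, and iterates the deformation to obtain a minimizing sequence whose critical set misses all of $\mathcal V^{n+1}$. Only then do the pull-tight and almost-minimizing arguments yield a min-max limit $\Xi$ with $\ind_G(\operatorname{spt}(\Xi))\le n$. Your proposal skips both the transversality step and the countable iteration, and these are precisely where the dimension count $n$ versus $n+1$ does its work.
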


The precise definition of equivariant index is given in \cref{sec:EquivSpectrum}, but it is just what one would expect, i.e., the maximal dimension of a linear subspace of the \emph{$G$-equivariant vector fields} where the second variation of the area functional is negative definite.

\begin{remark}
The assumptions that $M$ has dimension $3$ and that $G$ consists of \emph{orientation-preserving} isometries are required in \cite{Ketover2016equiv} and \cite{Ketover2016fb}, which we use for the regularity and genus bound in \cref{prop:AlmostMinSeq} (see also \cref{rem:SymmetriesLocally}).
In this paper, such assumptions are used only in \cref{sec:ConvBoundGIndex}. Elsewhere the arguments work for any finite group of isometries $G$ and any ambient dimension $3\le m+1\le 7$.
\end{remark}

\begin{remark}
The equivariant index estimate in \cref{thm:EquivMinMax} implies the analogous result in the closed case, since, if we repeat the proofs forgetting about the boundary, everything works in the same way (actually more easily). 
\end{remark}

\subsection{Applications}

As a direct application, we deduce that the family of free boundary minimal surfaces in $B^3$ constructed in \cite{CarlottoFranzSchulz2020} have dihedral equivariant index \emph{equal to} one. 

Let us briefly recall the geometry of these surfaces in order to better appreciate the theorem below.
Fixed any $g\ge 1$, recall that the dihedral group $\mathbb{D}_{g+1}$ acting on $\overline{B^3}$ is defined as the subgroup of Euclidean isometries generated by the rotation of angle $2\pi/(g+1)$ around the vertical axis $\xi_0\eqdef \{(0,0,r) \st r\in[-1,1]\}$ and by the rotations of angle $\pi$ around the $g+1$ horizontal axes $\xi_k \eqdef \{ (r\cos(k\pi/(g+1)), r\sin(k\pi/(g+1)), 0) \st r\in [-1,1] \}$ (see also \cite{CarlottoFranzSchulz2020}*{pp. 1}).
Then the free boundary minimal surface $M_g\subset B^3$ constructed in \cite{CarlottoFranzSchulz2020}*{Theorem 1.1} is $\dih_{g+1}$-equivariant, has genus $g$ and connected boundary.
Moreover, $M_g$ contains the horizontal axes $\xi_1,\ldots,\xi_{g+1}$ and intersects $\xi_0$ orthogonally.

\begin{theorem} \label{thm:Dg1Index}
For all $g\ge 1$, we can assume that the free boundary minimal surface $M_g\subset B^3$ constructed in \cite{CarlottoFranzSchulz2020}*{Theorem 1.1} has $\dih_{g+1}$-equivariant index equal to $1$.
\end{theorem}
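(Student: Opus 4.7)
The plan is to combine the equivariant index upper bound from \cref{thm:EquivMinMax}, specialised to one-parameter sweepouts, with a direct lower bound showing that $M_g$ cannot be $\dih_{g+1}$-stable.

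First, I would recall that in \cite{CarlottoFranzSchulz2020} the surface $M_g$ is produced as the min-max limit associated to a one-parameter $\dih_{g+1}$-sweepout $\{\Sigma_t\}_{t\in I}$ of $\overline{B^3}$ whose $\dih_{g+1}$-saturation $\Pi$ satisfies the nontriviality condition $W_\Pi > \sup_{t\in \partial I}\area(\Sigma_t)$. Applying \cref{thm:EquivMinMax} with $n=1$ then gives at once
\[
\ind_{\dih_{g+1}}(\operatorname{spt}(\limitv))\le 1,
\]
where $\limitv = \sum_i m_i\limitv_i$ is the resulting varifold limit. Since the construction and topological identification carried out in \cite{CarlottoFranzSchulz2020} single out $\operatorname{spt}(\limitv)$ as $M_g$ with multiplicity one, this yields the upper bound $\ind_{\dih_{g+1}}(M_g)\le 1$.

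For the matching lower bound, the idea is to argue that $M_g$ cannot be $\dih_{g+1}$-equivariantly stable, via a standard pull-down (mountain-pass) deformation. Suppose, for contradiction, that the second variation of area at $M_g$ is nonnegative along every $\dih_{g+1}$-equivariant normal field with free boundary condition. Then, using a $\dih_{g+1}$-equivariant tubular neighborhood of $M_g$ in $\overline{B^3}$ adapted to $\partial B^3$, one can build a $\dih_{g+1}$-equivariant ambient isotopy of $\overline{B^3}$, supported near $M_g$, that strictly decreases area on slices sufficiently close to $M_g$. Inserting this isotopy into the original sweepout near its critical time produces a new element of $\Pi$ with strictly smaller supremum of area, contradicting the definition of $W_\Pi$. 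Hence there exists a $\dih_{g+1}$-equivariant direction of strictly negative second variation, so $\ind_{\dih_{g+1}}(M_g)\ge 1$, and combining the two bounds gives the claimed equality.

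The hardest part will be the rigorous execution of the pull-down in the equivariant free boundary setting: one needs an isotopy that is simultaneously $\dih_{g+1}$-equivariant, preserves the free boundary condition along $\partial B^3$, decreases area in a controlled way on nearby slices, and coincides with the identity outside a small neighborhood of the critical time so as to glue back into the original sweepout. These steps are the equivariant counterparts of standard constructions in min-max theory, but they require careful bookkeeping of the $\dih_{g+1}$-action on the normal bundle of $M_g$, of the fact that the rotations by $\pi$ around the horizontal axes $\xi_1,\ldots,\xi_{g+1}$ reverse the two sides of $M_g$, and of the interaction between the equivariant tubular neighborhood and $\partial B^3$.
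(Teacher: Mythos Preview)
Your upper bound via \cref{thm:EquivMinMax} with $n=1$ is exactly what the paper does. The lower bound, however, has a genuine gap, and in fact the mechanism you describe is backwards.

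You assume $\ind_{\dih_{g+1}}(M_g)=0$ and claim you can then build an equivariant isotopy near $M_g$ that \emph{strictly decreases} area on nearby slices. But equivariant index zero means the second variation is nonnegative on every $\dih_{g+1}$-equivariant normal field, so $M_g$ is a (weak) \emph{local minimum} of area among nearby $\dih_{g+1}$-equivariant surfaces. There is no direction along which to push down; any equivariant isotopy supported near $M_g$ moves nearby slices to surfaces of area at least $\area(M_g)=W_\Pi$, not below. The ``pull-down'' step you describe is precisely what works when there \emph{is} a negative direction (this is essentially how the upper index bound is proved), not when there is none. A rigorous one-parameter lower index bound in the min-max setting (à la Marques--Neves) is a substantial theorem in its own right, requiring multiplicity-one information and a careful deformation argument quite different from what you outline; it is not available here as a black box.

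The paper takes a much more direct route for the lower bound: it exhibits an explicit equivariant test function. Since $M_g$ contains the horizontal axes $\xi_1,\dots,\xi_{g+1}$, each rotation by $\pi$ about a horizontal axis reverses the normal, i.e.\ $\operatorname{sgn}_{M_g}(h)=-1$ for those generators; this forces the sign homomorphism on all of $\dih_{g+1}$. The coordinate function $u(x_1,x_2,x_3)=x_3$ then lies in $C^\infty_{\dih_{g+1}}(M_g)$ (it is odd under the horizontal rotations and even under the vertical one, matching $\operatorname{sgn}_{M_g}$), and a standard computation for free boundary minimal surfaces in $B^3$ (see \cite{Devyver2019}*{Lemma~6.1}) gives
\[
Q_{M_g}(u,u)=-\int_{M_g}\abs{A}^2u^2\,\de\Haus^2<0.
\]
This single test function shows $\ind_{\dih_{g+1}}(M_g)\ge 1$ with no deformation argument at all.
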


\begin{remark}
Note that the phrase ``we can assume'' in the statement is due to the fact that in \cref{thm:EquivMinMax} we prove the existence of a surface with a bound on the equivariant index, but we do not show that every surface obtained from the min-max procedure has this property. Therefore, \cref{thm:EquivMinMax} has to be applied in place of the min-max theorem used at the beginning of Section 4 in \cite{CarlottoFranzSchulz2020}.
\end{remark}

\begin{remark}
Even if, for simplicity, we state the result only for the surfaces $M_g$ constructed in \cite{CarlottoFranzSchulz2020}*{Theorem 1.1}, \cref{thm:EquivMinMax} can be applied to any surface obtained via an equivariant min-max procedure fitting the framework defined above. See \cref{rem:OtherFamilies} for further details on this matter.
\end{remark}

Besides its own interest, \cref{thm:Dg1Index} is meant to be a first step toward the computation of the (nonequivariant) Morse index of the family of surfaces $M_g$ (or possibly any another family arising from an equivariant min-max procedure, even with multiple parameters in play). 
This would be particularly relevant, as the only free boundary minimal surfaces in $B^3$ for which we know the index are the equatorial disc (with index $1$) and the critical catenoid (with index $4$, see \cite{Devyver2019}, \cite{SmithZhou2019}, \cite{Tran2020}).
The idea is that the equivariant index gives information on the index of any of the isometric parts (which are $2(g+1)$ in the case of $M_g$) of the equivariant surface. 
This problem will be addressed elsewhere.

\vspace{3ex}
\textbf{Acknowledgements.} 
The author wishes to thank her supervisor Alessandro Carlotto for many useful comments and discussions, and Tongrui Wang for helpful observations on the first version of this manuscript.
This project has received funding from the European Research Council (ERC) under the European
Union’s Horizon 2020 research and innovation programme (grant agreement No. 947923).

\section{Basic notation}
Let $(M^{m+1},\gamma)$ be a compact Riemannian manifold with boundary and let $\Sigma^m\subset M$ be a hypersurface.
Then we denote by:
\begin{itemize}
\item $\vf(M)$ the set of vector fields on $M$ \emph{tangent} to $\partial M$, namely $X\in\vf(M)$ if and only if $X(x)\in T_x\partial M$ for all $x\in \partial M$. In what follows, we consider (tangent) vector fields in $\vf(M)$ often omitting the adjective ``tangent''.
\item $X^\perp \in \Gamma(N\Sigma)$ the normal component to $\Sigma$ of a vector $X\in\vf(M)$, where $\Gamma(N\Sigma)$ denotes the sections of the normal bundle of $\Sigma$.
\item $D$ the connection on $M$, $\grad$ the induced connection on $\Sigma$ and $\grad^\perp$ the induced connection on the normal bundle of $\Sigma$.
\item $\Ric_M$ the Ricci curvature of $M$.
\item $\eta$ the outward unit conormal vector field to $\partial\Sigma$.
\item $\hat\eta$ the outward unit conormal vector field to $\partial M$ (which coincides with $\eta$ along $\partial\Sigma$ when $\Sigma$ satisfies the free boundary property).
\item $\II^{\partial M} = \gamma(D_XY, \hat \eta)$ the second fundamental form of $\partial M\subset M$.
\item $A(X,Y) = (D_XY)^\perp$ the second fundamental form of $\Sigma\subset M$ and $\abs{A}^2$ its Hilbert-Schmidt norm.
\end{itemize}

\section{Equivariant spectrum} \label{sec:EquivSpectrum}

In this section $(M^{m+1},\smetric)$ denotes a compact Riemannian manifold with boundary and $G$ denotes a finite group of isometries of $M$.

\begin{definition}

A (tangent) vector field $X\in\vf(M)$ is $G$-equivariant if $\selem_*X = X$ for all $\selem\in G$. We denote by $\vf_G(M)$ the set of all $G$-equivariant vector fields.
\end{definition}

\begin{definition}
A varifold $V$ in $M$ is $G$-equivariant if $\selem_*V=V$ for all $\selem\in G$.
We denote by $\mathcal{V}^2_G(M)$ the set of $2$-dimensional $G$-equivariant varifolds supported in $M$, endowed with the weak topology. Recall that $\mathcal{V}^2_G(M)$ is metrizable and we denote by $\vard$ a metric metrizing it (see \cite{Pitts1981}*{pp. 66} or \cite{MarquesNeves2014}*{pp. 703}). Moreover, given a varifold $V$ in $M$, we denote by $\norm{V}$ the Radon measure in $M$ associated with $V$.
\end{definition}

\begin{remark} \label{rem:GequivVecFieldToFlow}
Given a $G$-equivariant vector field $X\in\vf_G(M)$, let $\Phi\colon [0,+\infty)\times M\to M$ be the associated flow. Then, if $V$ is a $G$-equivariant varifold, $(\Phi_t)_*V$ is $G$-equivariant as well for all $t\in[0,+\infty)$. Indeed, $\Phi\colon[0,+\infty)\times M\to M$ is a $G$-equivariant isotopy (see \cref{def:GEquivIsotopy}).
\end{remark}

\begin{definition}
Given a varifold $V$ in $M$ and a vector field $X\in\vf(M)$, we denote by $\delta V(X)$ the first variation of the area of $V$ along $X$, namely
\[
\delta V(X) = \frac{\d}{\d t}\Big|_{t=0} \norm{(\Phi_t)_*V}(M),
\]
where $\Phi\colon[0,+\infty)\times M\to M$ is the flow generated by $X$.
If $V\in\mathcal{V}^2_G(M)$ is $G$-equivariant, we say that $V$ is $G$-stationary if $\delta V(X)=0$ for all $G$-equivariant vector fields $X\in \vf_G(M)$.
\end{definition}

\begin{remark} \label{rem:stationaryGstationary}
By the principle of symmetric criticality by Palais (see \cite{Palais1979}, or \cite{Ketover2016equiv}*{Lemma 3.8} for the result in this setting), we have that $V\in\mathcal{V}^2_G(M)$ is $G$-stationary if and only if it is stationary.
\end{remark}

Now, let $\Sigma^m\subset M$ be a compact $G$-equivariant hypersurface and let $\Phi\colon[0,+\infty)\times M\to M$ be the flow associated to a $G$-equivariant vector field $X\in\vf_G(M)$, then we have that
\[
\frac{\d}{\d t}\Big|_{t=0} \Haus^m(\Phi_t(\Sigma)) = - \int_\Sigma \scal{H}{X} \de \Haus^{m} + \int_{\partial \Sigma} \scal{\eta}{X} \de \Haus^{m-1} = 0,
\]
where $\eta$ is the outward unit conormal vector field to $\partial \Sigma$.
Moreover, if $\Sigma$ is stationary (namely a \emph{free boundary minimal hypersurface}), we have
\begin{align*}
\frac{\d^2}{\d t^2}\Big|_{t=0} \Haus^m(\Phi_t(\Sigma)) &= Q^\Sigma(X^\perp, X^\perp),
\end{align*}
where 
\[
Q^\Sigma(X^\perp, X^\perp)\eqdef \int_\Sigma (\abs{\grad^\perp X^\perp}^2 - (\Ric_M(X^\perp,X^\perp) + \abs{A}^2\abs{X^\perp}^2)) \de \Haus^m + \int_{\partial\Sigma} \II^{\partial M}(X^\perp, X^\perp) \de \Haus^{m-1}.
\]

Let $\Gamma_G(N\Sigma)$ denote the sections of the normal bundle of $\Sigma$ obtained as restriction to $\Sigma$ of $G$-equivariant vector fields in $M$.
Then, the \emph{$G$-equivariant (Morse) index} $\ind_G(\Sigma)$ of $\Sigma$ is defined as the maximal dimension of a linear subspace of $\Gamma_G(N\Sigma)$ where $Q^\Sigma$ is negative definite.

\begin{remark} \label{rem:GequivExtension}
Note that, given a $G$-equivariant submanifold $\Sigma$, any $G$-equivariant vector field defined along $\Sigma$ can be extended to a $G$-equivariant vector field on the ambient manifold $M$. Indeed, we can first extend it to a vector field $\tilde X\in\vf(M)$ (not necessarily $G$-equivariant) and then define \[X=\frac{1}{\abs{G}} \sum_{\selem\in G} \selem_*\tilde X.\]
\end{remark}

\subsection{Equivariant index for two-sided hypersurfaces} \label{sec:twoside}

If $\Sigma^m\subset M^{m+1}$ is two-sided, given a $G$-equivariant section $Y\in\Gamma_G(N\Sigma)$ of the normal bundle, we can write it as $Y=u\nu$, where $u\in C^\infty(\Sigma)$ and $\nu$ is a choice of unit normal to $\Sigma$.
Then, the second variation of the volume of $\Sigma$ along $Y$ is given by
\begin{align*}
Q^\Sigma(Y,Y)=Q_\Sigma(u,u) &\eqdef \int_\Sigma (\abs{\grad u}^2 - (\Ric_M(\nu,\nu)+\abs{A}^2)u^2) \de \Haus^m + \int_{\partial \Sigma} \II^{\partial M}(\nu,\nu) u^2 \de \Haus^{m-1}\\
&= -\int_\Sigma u \mathcal L_\Sigma u \de \Haus^m + \int_{\partial \Sigma}(u\partial_\eta u + \II^{\partial M}(\nu,\nu) u^2 ) \de \Haus^{m-1},
\end{align*}
where $\mathcal L_\Sigma \eqdef \lapl + \Ric_M(\nu,\nu) + \abs{A}^2$ is the Jacobi operator associated to $\Sigma$.

Now let us further assume that $\Sigma$ is connected. Note that, if $Y=u\nu$ is $G$-equivariant, for all $h\in G$ we must have
\[
\selem_*(u\nu) = u\nu \implies u(\selem(x))\nu(\selem(x)) =  \d \selem_x[u(x)\nu(x)] = u(x) \d \selem_x[\nu(x)] = \operatorname{sgn}_\Sigma(\selem) u(x) \nu(\selem(x)),
\]
where $\operatorname{sgn}_\Sigma(\selem) = 1$ if $\selem_*\nu = \nu$ and $\operatorname{sgn}_\Sigma(\selem)=-1$ if $\selem_*\nu = -\nu$. Indeed, $h_*\nu $ is equal to $\nu$ or $-\nu$, because $h$ is an isometry and $h(\Sigma)=\Sigma$ (hence $h_*(N\Sigma)=N\Sigma$), and $\Sigma$ is connected (thus the sign does not depend on the point on $\Sigma$). 
Note that $\operatorname{sgn}_\Sigma(\selem_1\circ\selem_2) = \operatorname{sgn}_\Sigma(\selem_1)\operatorname{sgn}_\Sigma(\selem_2)$.
This leads to the following definition.

\begin{definition} \label{def:EquivFuncSpaces}
Given a compact connected Riemannian manifold $\Sigma^m$, a finite group $G$ of isometries of $\Sigma$ and a multiplicative function $\operatorname{sgn}_\Sigma\colon G\to \{-1,1\}$, we define the spaces
\begin{align*}
C^\infty_G(\Sigma) &\eqdef \{u\in C^\infty(\Sigma) \st u\circ \selem = \operatorname{sgn}_\Sigma(\selem) u\ \ \forall \selem\in G\}, \\
L^2_G(\Sigma) &\eqdef \overline{C^\infty_G(\Sigma)}^{\norm{\cdot}_{L^2}}, \\
H^1_G(\Sigma) &\eqdef \overline{C^\infty_G(\Sigma)}^{\norm{\cdot}_{H^1}}.
\end{align*}
\end{definition}

\begin{remark}
Observe that $L^2_G(\Sigma)$ is a Hilbert space endowed with the scalar product $(u,v)_{L^2}=\int_\Sigma uv \de \Haus^m$ and $C^\infty_G(\Sigma)$ is dense in $L^2_G(\Sigma)$.
\end{remark}

The $G$-equivariant (Morse) index of $\Sigma$ coincides with the maximal dimension of a subspace of $C^\infty_G(M)$ where $Q_\Sigma$ is negative definite.
Moreover, thanks to \cref{thm:DiscreteSpectrum}, the elliptic problem
\[
\begin{cases}
-\mathcal L_\Sigma\varphi = \lambda \varphi & \text{in $\Sigma$}\\
\partial_\eta \varphi = -   \II^{\partial M}(\nu,\nu)\varphi & \text{in $\partial\Sigma$}
\end{cases}
\]
admits a discrete spectrum $\lambda_1\le \lambda_2\le \ldots\le \lambda_k\le \ldots\to+\infty$ with associated $L^2_G(\Sigma)$-orthonormal basis of eigenfunctions $(\varphi_k)_{k\ge 1}\subset C^\infty_G(\Sigma)$ of $L^2_G(\Sigma)$. The $G$-equivariant index equals the number of negative eigenvalues.

\begin{remark}
Note that the assumption of connectedness of $\Sigma$ is not restrictive. Indeed, if $\Sigma$ consists of several connected components $\Sigma_1,\ldots,\Sigma_k$ for some $k\ge 2$, then we can consider each connected component separately and $\ind_G(\Sigma) = \sum_{i=1}^k\ind_G(\Sigma_i)$.
\end{remark}

\subsection{Equivariant index for one-sided hypersurfaces}
If $\Sigma^m\subset M^{m+1}$ is one-sided, the elliptic problem
\begin{equation}\label{eq:OneSidedProblem}
\begin{cases}
-\lapl_\Sigma^\perp Y -\Ric_M^\perp(Y,\cdot) - \abs{A}^2Y = \lambda Y & \text{in $\Sigma$}\\
\grad^\perp_\eta Y = -   (\II^{\partial M}(Y,\cdot))^\sharp & \text{in $\partial\Sigma$},
\end{cases}
\end{equation}
on the $G$-equivariant sections $\Gamma_G(N\Sigma)$ of the normal bundle, admits a discrete spectrum $\lambda_1\le \lambda_2\le \ldots\le \lambda_k\le \ldots\to+\infty$ and the $G$-equivariant index coincides with the number of negative eigenvalues. This follows from the two-sided case applied to the double cover of $\Sigma$.
Indeed, let us consider the double cover $\pi\colon\tilde\Sigma\to\Sigma$, then every isometry $\selem\in G$ lifts to two isometries $\tilde \selem_1, \tilde \selem_2$ of $\tilde\Sigma$, where $\tilde \selem_1\circ\tilde \selem_2^{-1}$ is given by the isometric involution $i\colon\tilde\Sigma\to\tilde\Sigma$ associated to the universal cover. We denote by $\tilde G$ the finite group generated by these isometries.
Every $Y\in\Gamma_G(N\Sigma)$ lifts to a vector field $\tilde Y\in\Gamma(N\tilde\Sigma)$ and we can write it as $\tilde Y= u\tilde\nu$, where $\tilde \nu$ is a global unit normal to $\tilde\Sigma$ and $u\in C^\infty_{\tilde G}(\tilde\Sigma)$.
Vice versa, for all $u\in C^\infty_{\tilde G}(\tilde\Sigma)$, the vector field $Y\eqdef \pi_*(u\tilde\nu)$ is well-defined and belongs to $\Gamma_G(N\Sigma)$. Hence we can just apply \cref{thm:DiscreteSpectrum} to $\tilde\Sigma$ and $\tilde G$ (as we did in the previous subsection) to obtain the desired properties on the spectrum of problem \eqref{eq:OneSidedProblem}.

\section{Free boundary minimal surfaces with bounded equivariant index} \label{sec:ConvBoundGIndex}

In this section, roughly speaking, we prove that a limit of free boundary minimal surfaces with bounded equivariant index satisfies the same bound on the equivariant index.

Throughout the section, we assume $(M^3,\gamma)$ to be a three-dimensional compact Riemannian manifold with boundary satisfying property \hyperref[HypP]{$(\mathfrak{P})$}, $G$ to be a finite group of \emph{orientation-preserving} isometries of $M$ and $\mathcal{S}\subset M$ to be the singular locus of $G$, namely
\[
\mathcal{S} \eqdef \{x\in M \st \exists\, \selem\in G,\ \selem(x) = x\}.
\]
Moreover, given a free boundary minimal surface $\Sigma^2\subset M$ and an open subset $U\subset M$, we let $\mu_1(\Sigma\cap U)$ denote the first eigenvalue of the problem
\begin{equation} \label{eq:LocalEigenvProb}
\begin{cases}
-\lapl_{\Sigma}^\perp Y -\Ric_M^\perp(Y,\cdot) - \abs{A}^2Y = \mu Y & \text{in $\Sigma\cap U$}\\
\grad^\perp_\eta Y = -   (\II^{\partial M}(Y,\cdot))^\sharp & \text{in $\partial\Sigma \cap U$}\\
Y=0 & \text{in $\partial(\Sigma\cap U)\setminus \partial \Sigma$}
\end{cases}
\end{equation}
on the sections $\Gamma(N(\Sigma\cap U))$ of the normal bundle. Furthermore, if $\Sigma$ and $\Sigma\cap U$ are $G$-equivariant, let $\lambda_1(\Sigma\cap U)$ be the first eigenvalue of the same problem \eqref{eq:LocalEigenvProb} on the \emph{$G$-equivariant} sections $\Gamma_G(N(\Sigma\cap U))$ of the normal bundle (see \cref{sec:EquivSpectrum}).

\begin{remark}
We stress the fact that we use the letters $\lambda$ and $\mu$ to denote the eigenvalues respectively \emph{taking} and \emph{not taking} into account the symmetry group.
\end{remark}

\begin{lemma}[{cf. \cite{Ketover2016equiv}*{Proposition 4.6}}] \label{lem:FirstEigenvalues}
In the setting above, let $\Sigma^2\subset M$ be a $G$-equivariant free boundary minimal surface in $M$ and fix $x\in\Sigma\setminus\mathcal{S}$.  
Then, there exists $\varepsilon_0>0$ such that, for all $0<\varepsilon<\varepsilon_0$, the $G$-equivariant subset $U_\varepsilon(x)\eqdef \bigcup_{\selem\in G} \selem(B_\varepsilon(x))$ consists of exactly $\abs{G}$ disjoint balls $\{B_\varepsilon(\selem(x))\}_{\selem\in G}$, and it holds
\[
\mu_1(\Sigma\cap B_\varepsilon(x)) = \mu_1(\Sigma\cap U_\varepsilon(x)) = \lambda_1(\Sigma\cap U_\varepsilon(x)).
\]
\end{lemma}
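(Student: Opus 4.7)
The plan has two essentially independent components: first, choose $\varepsilon_0$ so that the orbit $G\cdot x$ gives rise to $\abs{G}$ disjoint balls; second, establish the two eigenvalue equalities, the first by a decoupling argument and the second by an explicit symmetrization.

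Since $x\in\Sigma\setminus\mathcal{S}$, no nontrivial element of $G$ fixes $x$, so the orbit $G\cdot x$ consists of $\abs{G}$ distinct points. Setting $\varepsilon_0<\tfrac{1}{2}\min_{\selem_1\neq\selem_2\in G}\dist(\selem_1(x),\selem_2(x))$, for every $0<\varepsilon<\varepsilon_0$ the balls $\{B_\varepsilon(\selem(x))\}_{\selem\in G}$ are pairwise disjoint, and because $G$ acts by isometries they are permuted by $G$; hence $U_\varepsilon(x)=\bigsqcup_{\selem\in G}B_\varepsilon(\selem(x))$. Using $G$-equivariance of $\Sigma$, this gives the corresponding disjoint decomposition $\Sigma\cap U_\varepsilon(x)=\bigsqcup_{\selem\in G}\Sigma\cap B_\varepsilon(\selem(x))$, with each piece isometric to $\Sigma\cap B_\varepsilon(x)$ via the restriction of $\selem$. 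The eigenvalue problem \eqref{eq:LocalEigenvProb} then decouples into $\abs{G}$ identical subproblems, so the first eigenvalue on $\Sigma\cap U_\varepsilon(x)$ equals the common first eigenvalue on any single piece, namely $\mu_1(\Sigma\cap B_\varepsilon(x))$.

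For the second equality, the inequality $\mu_1(\Sigma\cap U_\varepsilon(x))\le\lambda_1(\Sigma\cap U_\varepsilon(x))$ is immediate since $\Gamma_G(N(\Sigma\cap U_\varepsilon(x)))\subset\Gamma(N(\Sigma\cap U_\varepsilon(x)))$. For the reverse, I would take a first eigensection $Y$ of \eqref{eq:LocalEigenvProb} on $\Sigma\cap B_\varepsilon(x)$ and define $Y^*\in\Gamma(N(\Sigma\cap U_\varepsilon(x)))$ by $Y^*|_{\Sigma\cap B_\varepsilon(\selem(x))}\eqdef \selem_*Y$ for each $\selem\in G$. This is well-defined by the disjointness of the balls. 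It is $G$-equivariant because for every $g\in G$ one has $g_*(\selem_*Y)=(g\circ\selem)_*Y=Y^*|_{\Sigma\cap B_\varepsilon((g\circ\selem)(x))}$. Since the operator in \eqref{eq:LocalEigenvProb} is built only from the induced metric on $\Sigma$, $\Ric_M$, $\abs{A}^2$, and $\II^{\partial M}$, all of which are preserved by ambient isometries that preserve $\Sigma$, pushforward by $\selem$ commutes with the differential operator and the boundary conditions. Thus $Y^*$ is a nonzero $G$-equivariant eigensection with eigenvalue $\mu_1(\Sigma\cap B_\varepsilon(x))=\mu_1(\Sigma\cap U_\varepsilon(x))$, which gives $\lambda_1(\Sigma\cap U_\varepsilon(x))\le\mu_1(\Sigma\cap U_\varepsilon(x))$.

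The main (minor) obstacle is the careful verification that pushforward by an ambient isometry intertwines the operator and boundary conditions of \eqref{eq:LocalEigenvProb}; this is a bookkeeping issue, slightly subtler in the one-sided case where one works with sections of $N\Sigma$ rather than scalars, but it follows from the fact that $\selem$ preserves $\Sigma$, $\partial\Sigma$, and $\partial M$, and therefore preserves the normal connection $\grad^\perp$, $\abs{A}^2$, $\Ric_M^\perp$, $\II^{\partial M}$, and the conormal $\eta$.
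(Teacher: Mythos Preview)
Your proof is correct and follows essentially the same strategy as the paper: both arguments construct a $G$-equivariant first eigensection on $\Sigma\cap U_\varepsilon(x)$ by transporting a first eigensection from a single ball to the other balls via the group action. The only cosmetic differences are that the paper first passes to scalar functions (taking $\varepsilon$ small enough that $\Sigma\cap B_\varepsilon(x)$ is two-sided, writing $Y=u\nu$, and using the sign function $\operatorname{sgn}_\Sigma$) and establishes all three quantities equal in one stroke, whereas you work directly with sections of the normal bundle and separate the two equalities via a decoupling step and a symmetrization step; your formulation avoids the two-sided reduction and is arguably cleaner in that respect.
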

\begin{proof}
First, note that $B_\varepsilon(\selem(x)) = \selem(B_\varepsilon(x))$ since every $\selem\in G$ is an isometry. Moreover, if $\varepsilon_0$ is sufficiently small (in particular smaller than half of the distance between $x$ and $\selem(x)$ for all $\selem\in G$),
then we have that $\selem(B_\varepsilon(x)) \cap B_\varepsilon(x) = B_\varepsilon(h(x)) \cap B_\varepsilon(x) \not= \emptyset$ for some $\selem\in G$ if and only if 
$h(x)=x$, which implies that $h=\id$ because $x\not\in\mathcal{S}$.
In particular, we get that the subset $U_\varepsilon(x)\eqdef \bigcup_{\selem\in G} \selem(B_\varepsilon(x))$ consists of exactly $\abs{G}$ disjoint balls.

Now, we let $\varepsilon>0$ be sufficiently small such that $\Sigma\cap B_\varepsilon(x)$ is two-sided. Then, we can write any section $X\in \Gamma(N(\Sigma\cap B_\varepsilon(x))$ of the normal bundle as $X = u\nu$, where $u\in C^\infty(\Sigma\cap B_\varepsilon(x))$ and $\nu$ is a choice of unit normal, as in \cref{sec:twoside}.
Consider the first nonnegative eigenfunction $\varphi\in C^\infty(\Sigma\cap B_\varepsilon(x))$ relative to the eigenvalue $\mu_1(\Sigma\cap B_\varepsilon(x))$, and define $\tilde\varphi\in C^\infty(\Sigma\cap U_\varepsilon(x))$ as
\[
\tilde\varphi(\selem(x)) \eqdef \operatorname{sgn}_\Sigma(\selem) \varphi(x) 
\]
for all $\selem\in G$. Note that $\tilde\varphi$ is well-defined and $G$-equivariant, i.e., $\tilde\varphi\in C^\infty_G(\Sigma\cap U_\varepsilon(x))$.
As a result, since $\tilde\varphi\in C^\infty_G(\Sigma\cap U_\varepsilon(x))$ is nonnegative and $G$-equivariant, we obtain that $\tilde\varphi$ is the first eigenfunction relative to the eigenvalue $\mu_1(\Sigma\cap U_\varepsilon(x))$, and also relative to $\lambda_1(\Sigma\cap U_\varepsilon(x))$.
This implies that the three numbers $\mu_1(\Sigma\cap B_\varepsilon(x))$, $\mu_1(\Sigma\cap U_\varepsilon(x))$ and $\lambda_1(\Sigma\cap U_\varepsilon(x))$ actually coincide.
\end{proof}

\begin{theorem} \label{thm:ConvBoundGIndex}
In the setting above, let $\{\Sigma_k\}_{k\in\N}$ be a sequence of $G$-equivariant free boundary minimal surfaces in $M$, with uniformly bounded area, such that $\ind_G(\Sigma_k)\le n$ for some fixed $n\in\N$. Then $\Sigma_k$ converges locally graphically and smoothly, possibly with multiplicity, to a free boundary minimal surface $\tilde\Sigma\subset M\setminus(\mathcal{S}\cup \mathcal{Y})$ in $M\setminus(\mathcal{S}\cup \mathcal{Y})$, where $\mathcal{S}\subset M$ is the singular locus of $G$ and $\mathcal{Y}$ is a finite subset of $M$ with $\abs{\mathcal{Y}}\le n\abs{G}$.
Furthermore, if there exists a $G$-equivariant free boundary minimal surface $\Sigma\subset M$ such that $\tilde\Sigma = \Sigma\setminus (\mathcal{S}\cup\mathcal{Y})$ (namely if $\tilde \Sigma$ extends smoothly to $M$), then $\ind_G(\Sigma)\le n$.
\end{theorem}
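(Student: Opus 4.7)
The plan is to adapt the non-equivariant index-bound compactness theorems of Sharp and Ambrozio--Carlotto--Sharp to the equivariant setting, using \cref{lem:FirstEigenvalues} as the essential bridge between the usual first Dirichlet eigenvalue on a small ball and the first $G$-equivariant eigenvalue on the corresponding orbit-neighborhood.

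For the first assertion (existence of a finite bad set $\mathcal{Y}$ with $\abs{\mathcal{Y}} \le n\abs{G}$), I would argue by contradiction. Suppose $\abs{A^{\Sigma_k}}$ fails to be locally bounded on $M\setminus\mathcal{S}$ modulo more than $n$ distinct $G$-orbits of blow-up points, and pick representatives $x_1,\ldots,x_{n+1}$. Choose $\varepsilon>0$ small enough that \cref{lem:FirstEigenvalues} applies at each $x_i$ and that the orbit-neighborhoods $U_\varepsilon(x_1),\ldots,U_\varepsilon(x_{n+1})$ are pairwise disjoint. A standard blow-up at a curvature concentration point in $B_\varepsilon(x_i)$ yields a non-flat complete minimal limit surface in $\R^3$ (or a half-space, in the boundary case), whose instability via Fischer--Colbrie--Schoen produces for $k$ large a smooth compactly supported test function in $B_\varepsilon(x_i)$ witnessing $\mu_1(\Sigma_k\cap B_\varepsilon(x_i)) < 0$. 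By \cref{lem:FirstEigenvalues} also $\lambda_1(\Sigma_k\cap U_\varepsilon(x_i))<0$, and the associated $G$-equivariant eigensection, extended by zero outside $U_\varepsilon(x_i)$, is a negative direction of $Q^{\Sigma_k}$ on $\Gamma_G(N\Sigma_k)$. Pairwise disjointness gives $n+1$ linearly independent such directions, contradicting $\ind_G(\Sigma_k)\le n$. Smooth graphical convergence of $\Sigma_k$ outside $\mathcal{S}\cup\mathcal{Y}$ then follows from the bounded-curvature, bounded-area free boundary compactness of \cite{AmbCarSha18Compactness}.

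For the second assertion, assume $\tilde\Sigma$ extends to a $G$-equivariant free boundary minimal surface $\Sigma\subset M$ and suppose for contradiction there exists an $(n+1)$-dimensional subspace $V\subset \Gamma_G(N\Sigma)$ on which $Q^\Sigma$ is negative definite. Since $\mathcal{Y}$ and the isolated intersection points of $\Sigma$ with $\mathcal{S}$ are $0$-dimensional in the $2$-dimensional $\Sigma$, logarithmic cutoffs $\chi_\varepsilon$ vanishing near them have energy tending to zero, so $\chi_\varepsilon V$ still spans $n+1$ negative directions for $\varepsilon$ small. On the support of $\chi_\varepsilon$, the convergence $\Sigma_k\to\Sigma$ is smooth and graphical with some multiplicity $m$, so each $Y\in V$ admits a $G$-equivariant lift $Y^k\in\Gamma_G(N\Sigma_k)$ obtained by copying $\chi_\varepsilon Y$ onto each of the $m$ sheets of $\Sigma_k$ over $\Sigma$ (passing to the orientable double cover as in \cref{sec:EquivSpectrum} when $\Sigma$ is one-sided, and averaging over $G$ to restore equivariance as in \cref{rem:GequivExtension}). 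Smooth convergence yields
\[
Q^{\Sigma_k}(Y_i^k,Y_j^k) \longrightarrow m\, Q^\Sigma(\chi_\varepsilon Y_i,\chi_\varepsilon Y_j),
\]
still negative definite on the span for large $k$, contradicting $\ind_G(\Sigma_k)\le n$.

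The main obstacle will be the cut-off step when the singular locus $\mathcal{S}$ meets $\Sigma$ in a $1$-dimensional set, i.e., when a rotation axis of $G$ is entirely contained in $\Sigma$ (as for the horizontal axes of the dihedral surfaces $M_g$ of \cite{CarlottoFranzSchulz2020}). Such an arc has positive capacity in the $2$-dimensional $\Sigma$, so a naïve logarithmic cutoff would not preserve negativity of the quadratic form. I expect to resolve this by exploiting that both $\Sigma$ and the $\Sigma_k$ are $G$-equivariant, so any $G$-equivariant section is automatically smooth across such an axis and the $\Sigma_k$ are themselves smooth and graphical across it by orientation-preserving rotational symmetry; hence no cutoff is required along axes in $\Sigma$, and truncation only needs to be performed on the finite set $\mathcal{Y}$ together with the isolated intersection points of $\Sigma$ with $\mathcal{S}$, which has zero $2$-capacity. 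The remaining bookkeeping is to verify that the sheet-by-sheet lift and the $G$-averaging commute with this cutoff up to errors controlled by the smooth graphical convergence.
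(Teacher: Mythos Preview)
Your treatment of the first assertion is essentially the paper's argument, phrased in terms of curvature concentration rather than the paper's eigenvalue condition $\limsup_k \mu_1(\Sigma_k\cap B_\varepsilon(x))<0$; the two are interchangeable here, and your orbit-counting is the same pigeonhole as the paper's.

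For the second assertion you have correctly identified the real obstacle, but your proposed resolution is not the one the paper uses, and as written it has a gap. You want to avoid cutting off along the $1$-dimensional part $\mathcal{S}_1$ of $\Sigma\cap\mathcal{S}$ by claiming that the $\Sigma_k$ converge smoothly and graphically across such axes. The theorem, however, only yields smooth graphical convergence in $M\setminus(\mathcal{S}\cup\mathcal{Y})$; nothing in the argument gives you control of $\Sigma_k$ near $\mathcal{S}$, and with multiplicity $m>1$ the arrangement of sheets near an axis can be delicate. The paper bypasses this entirely with a different observation: by \cite{Ketover2016equiv}*{Lemmas 3.4 and 3.5}, at every point $x\in\mathcal{S}_1$ there is some $h\in G$ fixing $x$ with $h_*\nu=-\nu$, so any $G$-equivariant normal section $X_i$ satisfies $X_i(x)=h_*X_i(x)=-X_i(x)$ and hence \emph{vanishes on} $\mathcal{S}_1$. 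A continuous function on a surface that is zero along a smooth curve can be approximated in $H^1$ by functions vanishing in a neighborhood of that curve (trace theorem, cf.\ \cite{EvansPDE}*{Section 5.5}); so one first makes the $X_i$ compactly supported away from $\mathcal{S}_1$, and then applies your log-cutoff only on the finite set $\mathcal{S}_0\cup\mathcal{Y}$. After that, smooth convergence on the support of the truncated fields transfers the negative directions to $\Sigma_k$ exactly as you describe. In short: the resolution is not ``no cutoff is needed along the axis because convergence is smooth there'', but rather ``cutoff along the axis is harmless because equivariant sections are already zero there''.
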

\vspace{-1.2ex}
\begin{remark} \label{rem:AlsoForConvMetrics}
The previous theorem holds even when $\Sigma_k$ is a free boundary minimal surface with respect to a $G$-equivariant Riemannian metric $\gamma_k$ on $M$, for all $k\in\N$, and the sequence $\{\gamma_k\}_{k\in\N}$ converges smoothly to $\gamma$.
\end{remark}
\begin{proof}
Let us consider the set
\[
\mathcal{Y}\eqdef  \left\{x\in M \setminus\mathcal{S}\st \forall \varepsilon>0,\ \limsup_{k\to\infty} \mu_1(\Sigma_k\cap B_\varepsilon(x)) < 0\right\}\subset M\setminus \mathcal{S}.
\]
By Theorems 18 and 19 in \cite{AmbCarSha18Compactness}, having uniformly bounded area, the surfaces $\Sigma_k$ converge locally graphically and smoothly in $M\setminus(\mathcal{S}\cup\mathcal{Y})$ with finite multiplicity to a free boundary minimal surface $\tilde\Sigma\subset M\setminus(\mathcal{S}\cup\mathcal{Y})$.

Assume by contradictions that $\mathcal{Y}$ contains $n'\eqdef n\abs{G}+1$ distinct points $x_1,\ldots,x_{n'}\in M\setminus\mathcal{S}$. Then, there exists $\varepsilon>0$ sufficiently small, such that the balls $B_\varepsilon(x_1),\ldots,B_\varepsilon(x_{n'})\subset M\setminus\mathcal{S}$ are disjoint and (up to subsequence) $\mu_1(\Sigma_k\cap B_\varepsilon(x_i))<0$ for all $k$ sufficiently large and $i=1,\ldots,n'$.
Now, defining the $G$-equivariant subsets $U_i =U_\varepsilon(x_i)\eqdef \bigcup_{\selem\in G} \selem(B_\varepsilon(x_i))$ for all $i=1,\ldots,n'$ as in \cref{lem:FirstEigenvalues} (taking $\varepsilon$ possibly smaller), we have that $\lambda_1(\Sigma_k\cap U_i) = \mu_1(\Sigma_k\cap B_\varepsilon(x_i))<0$.
Moreover, $U_i\cap U_j\not= \emptyset$ for some $i,j\in\{1,\ldots,n'\}$ if and only if $U_i=U_j$. Note that, fixed $i\in\{1,\ldots,n'\}$, there are at most $\abs{G}$ values of $j\in\{1,\ldots,n'\}$ such that $U_i=U_\varepsilon(x_i) = U_\varepsilon(x_j) = U_j$. Therefore, at least $n+1 = \lceil \frac{n'}{\abs{G}}\rceil$, say $U_1,\ldots,U_{n+1}$, of the sets $U_1,\ldots, U_{n'}$ are disjoint.
In particular, this contradicts the assumption $\ind_G(\Sigma_k) \le n$, since $\Sigma_k\cap U_1,\ldots,\Sigma_k\cap U_{n+1}$ are disjoint $G$-equivariant $G$-unstable subsets of $\Sigma_k$.  
Hence, we have that $\abs{\mathcal{Y}}\le n\abs{G}$.

Let us now suppose that $\tilde\Sigma$ extends smoothly to $\Sigma$ in $M$, and assume by contradiction that $\ind_G(\Sigma)>n$. Then, there exist $G$-equivariant vector fields $X_1,\ldots,X_{n+1}\in\Gamma_G(N\Sigma)$ on $\Sigma$ such that $\sum_{i=1}^{n+1}a_i X_i$ is a negative direction for the second variation of the area of $\Sigma$ for all $(a_1,\ldots,a_{n+1})\in \R^{n+1}\setminus\{0\}$. 
Since the isometries in $G$ are orientation-preserving, $\Sigma\cap\mathcal{S}$ consists of a finite union $\mathcal{S}_0$ of isolated points and of a finite union $\mathcal{S}_1=(\Sigma\cap\mathcal{S})\setminus\mathcal{S}_0$ of smooth curve segments (see \cite{CooperHodgsonKerckhoff2000}*{Theorem 2.4} and \cite{Ketover2016equiv}*{Lemmas 3.3, 3.4 and 3.5}). 
In particular, by Lemmas 3.4 and 3.5 in \cite{Ketover2016equiv}, for all $x\in\mathcal{S}_1$ there exists $\selem\in G$ such that $h(x)=x$ and $h_*\nu = -\nu$, where $\nu$ is a choice of unit normal to $\Sigma$ at the point $x$.
Therefore, by equivariance of $X_1,\ldots,X_{n+1}\in\Gamma_G(N\Sigma)$, we have that $X_i = 0$ on $\mathcal{S}_1$ for all $i=1,\ldots,n+1$.
As a result, thanks to a standard cutoff argument, we can assume without loss of generality that $X_1,\ldots,X_{n+1}$ are compactly supported in $M\setminus(\mathcal{S}\cup \mathcal{Y})$. Indeed, we can first suppose that $X_1,\ldots,X_{n+1}$ are compactly supported in $M\setminus\mathcal{S}_1$, because every continuous function on a surface that is zero along a smooth curve can be approximated in the $H^1$-norm with functions that are zero in a neighborhood of such curve (see e.g. the proof of Theorem 2 in \cite{EvansPDE}*{Section 5.5}).
Secondly, we can suppose that $X_1,\ldots,X_{n+1}$ are zero in a neighborhood of the finite set $\mathcal{S}_0\cup\mathcal{Y}$ using a standard log-cutoff argument. Observe that all these operations can be made in an equivariant way, because both $\mathcal{S}_1$ and $\mathcal{S}_0\cup\mathcal{Y}$ are $G$-equivariant.
Since $\Sigma_k$ smoothly converges (possibly with multiplicity) to $\Sigma$ in $M\setminus (\mathcal{S}\cup \mathcal{Y})$, we thus get that $\ind_G(\Sigma_k) > n$ for $k$ sufficiently large, which contradicts our assumption. This proves $\ind_G(\Sigma)\le n$, as desired.
\end{proof}

\section{Outline of the proof of regularity and genus bound} \label{sec:ProofRegularityGenus}

In this section, we outline the results about the regularity and the genus bound for the surface obtained via a Simon--Smith equivariant min-max procedure. 
Besides being interesting on its own, we also need to unfold some of those arguments for the proof of \cref{thm:EquivMinMax}. Indeed, we do not prove that the surface obtained from the min-max procedure has controlled equivariant index, but we show that it is possible to properly modify the proof in order to obtain a surface with the desired bound on the equivariant index.

We also include some detailed proofs for completeness, even if they are very similar to existing ones. This is because sometimes we need slight variations of existing results (for example here we consider $n$-dimensional sweepouts) or the proofs are split in many different references.

\begin{definition}
In the setting of \cref{thm:EquivMinMax}, given a minimizing sequence of sweepouts $\so^j=\{\Sigma^j_t\}_{t\in I^n}\in$ for $j\in\N$ (namely such that $\lim_{j\to\infty}\sup_{t\in I^n} \area(\Sigma_t^j) = W_\Pi$), we define its \emph{critical set} as
\begin{align*}
    C(\{\so^j\}_{j\in\N}) = \{ & V\in\mathcal{V}^2_G(M) \text{ such that $\exists$ $j_k\to+\infty$, $t_{j_k}\in I^n$ with $\vard(\Sigma^{j_k}_{t_{j_k}},V)\to 0$},\ \norm{V}(M)=W_\Pi\}.
\end{align*}
\end{definition}

\subsection{Min-max sequence converging to a stationary varifold}

The first step is to show that there exists a min-max sequence converging to a stationary varifold, as stated in the following proposition.

\begin{proposition} [{cf. \cite{Ketover2016equiv}*{Proposition 3.9}}] \label{prop:ConvergenceToStationary}
In the setting of \cref{thm:EquivMinMax}, given a minimizing sequence $\{\boldsymbol{\Lambda}^j\}_{j\in\N}=\{\{\Lambda_t^j\}_{t\in I^n}\}_{j\in\N}$, there exists another minimizing sequence $\{\so^j\}_{j\in\N}=\{\{\Sigma_t^j\}_{t\in I^n}\}_{j\in \N}$ such that, if $\{\Sigma_{t_j}^j\}_{j\in\N}$ is a min-max sequence, then (up to subsequence) $\Sigma^j_{t_j}$ converges in the sense of varifolds to a stationary varifold.
Moreover, it holds $C(\{\so^j\}_{j\in\N}) \subset C(\{\boldsymbol{\Lambda}^j\}_{j\in\N})$.
\end{proposition}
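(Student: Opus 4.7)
The strategy is a standard \emph{equivariant pull-tight} argument in the spirit of Pitts and Colding--De Lellis, taking advantage of the principle of symmetric criticality (\cref{rem:stationaryGstationary}) to realize every area-decreasing deformation through $G$-equivariant isotopies.

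First I would set up the compactification. Let $A_\infty \eqdef \{V\in\mathcal{V}^2_G(M) \st V \text{ is stationary and } \norm{V}(M)\le W_\Pi\}$. By \cref{rem:stationaryGstationary}, a $G$-equivariant varifold is stationary if and only if it is $G$-stationary, so $A_\infty$ is exactly the set of $G$-stationary targets we want to drive the min-max sequence into. The set $\{V\in\mathcal{V}^2_G(M) \st \norm{V}(M)\le W_\Pi+1\}$ is compact in the $\vard$-topology, and $A_\infty$ is a closed subset by the continuity of the first variation in $V$.

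Next I would build a continuous equivariant area-decreasing assignment. For each $V\in\mathcal{V}^2_G(M)\setminus A_\infty$ with $\norm{V}(M)\le W_\Pi+1$, the fact that $V$ is not $G$-stationary produces, via symmetric criticality, a $G$-equivariant vector field $X_V\in\vf_G(M)$ with $\delta V(X_V)<0$. Using the upper semi-continuity of $V\mapsto \delta V(X)$ in $V$ (for fixed $X$), I cover compact annuli $\{V \st 2^{-r-1}\le \vard(V,A_\infty)\le 2^{-r}\}$ by finitely many open sets on each of which a single $X$ works with a quantitative bound $\delta V(X)\le -c_r<0$; a partition of unity in $V$ then yields a continuous map $V\mapsto X_V\in\vf_G(M)$ (continuity is preserved under averaging since all $X_V$ are already $G$-equivariant, cf.\ \cref{rem:GequivExtension}). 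I also multiply by a cutoff in $\norm{V}(M)$ supported in a neighborhood of $W_\Pi$, so that $X_V\equiv 0$ whenever $V\in A_\infty$ or $\norm{V}(M)$ is far from $W_\Pi$. Let $\Psi^V$ denote the flow of $X_V$; by \cref{rem:GequivVecFieldToFlow}, each $\Psi^V_t$ is a $G$-equivariant isotopy of $M$, with a uniform time $T>0$ of existence.

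Then I would apply this pull-tight to the minimizing sequence. Given $\boldsymbol{\Lambda}^j = \{\Lambda^j_t\}_{t\in I^n}\in \Pi$, define
\[
\Sigma^j_t \eqdef \Psi^{\Lambda^j_t}_{\tau_j(t)}(\Lambda^j_t),
\]
where $\tau_j(t)\in[0,T]$ is a carefully chosen time depending smoothly on $t$ (and equal to $0$ near $\partial I^n$, exploiting the strict inequality $W_\Pi>\sup_{\partial I^n}\area(\Sigma_t)$ to ensure no deformation is needed on the boundary). The composition defines a smooth $G$-equivariant isotopy of $M$ fixing $\partial I^n$, so $\{\Sigma^j_t\}_{t\in I^n}\in \Pi$; the ``bad set'' structure of the original generalized family is preserved because each $\Psi^V_t$ is a global diffeomorphism of $M$. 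The construction also guarantees $\area(\Sigma^j_t)\le \area(\Lambda^j_t)$ pointwise, so $\{\so^j\}_{j\in\N}$ is again minimizing.

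Finally I would verify the two claims. For the critical set inclusion, any $V\in C(\{\so^j\})$ arises as a varifold limit of a subsequence $\Sigma^{j_k}_{t_{j_k}}=\Psi^{\Lambda^{j_k}_{t_{j_k}}}_{\tau_{j_k}(t_{j_k})}(\Lambda^{j_k}_{t_{j_k}})$ with $\norm{V}(M)=W_\Pi$; up to passing to a further subsequence the unmodified slice $\Lambda^{j_k}_{t_{j_k}}$ converges to some $V'$, and the continuity of the assignment $V\mapsto (X_V,\tau(\cdot))$ together with the fact that $\tau_j\to 0$ whenever $\norm{V'}(M)=W_\Pi$ and $V'$ is already $G$-stationary forces $V=V'$; in any case $\norm{V'}(M)=W_\Pi$, so $V'\in C(\{\boldsymbol{\Lambda}^j\})$. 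For the stationarity of limits, suppose $\{\Sigma^j_{t_j}\}$ is a min-max sequence with varifold limit $V$, and assume toward contradiction that $V$ is not $G$-stationary, so that $\vard(V,A_\infty)\ge 2^{-r}$ for some $r$. Then for $j$ large the slice $\Lambda^j_{t_j}$ lies in the region where the pull-tight decreases area by a definite amount $c_r>0$, giving $\area(\Sigma^j_{t_j})\le W_\Pi-c_r/2$ eventually, contradicting the min-max property.

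The main obstacle I anticipate is the construction of the continuous assignment $V\mapsto X_V$ that is genuinely continuous in the weak topology of varifolds while producing a uniformly $G$-equivariant flow that decreases area \emph{uniformly} on annuli around $A_\infty$. All other steps are soft, once the pull-tight is in place, but this one requires delicate use of compactness, symmetric criticality and equivariant averaging of partitions of unity.
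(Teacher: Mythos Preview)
Your plan is the same equivariant pull-tight as the paper's, and the overall architecture is correct. There is, however, a regularity gap you pass over. The assignment $V\mapsto X_V$ produced by a partition of unity in the varifold variable is only \emph{continuous}, not smooth; consequently the map $(t,x)\mapsto \Psi^{\Lambda^j_t}_{\tau_j(t)}(x)$ is at best $C^1$ in $t$, and your assertion that ``the composition defines a smooth $G$-equivariant isotopy of $M$'' (so that $\{\Sigma^j_t\}_{t\in I^n}\in\Pi$) is unjustified. The paper addresses this explicitly: after building the continuous pull-tight flow $\Psi_V$ and setting $\tilde\Sigma^j_t=\Psi_{\Lambda^j_t}(1,\Lambda^j_t)$, it convolves the resulting isotopy in the parameter $t$ with a smooth kernel to obtain a genuinely smooth $G$-equivariant isotopy $\Phi^j$ with $\|\Phi^j(t,\cdot)-\tilde\Phi^j(t,\cdot)\|_{C^1}\le 1/(j+1)$, and then checks that all conclusions survive this perturbation via the uniform $\vard$-closeness of $\Sigma^j_t$ and $\tilde\Sigma^j_t$. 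Without this smoothing step your deformed family need not lie in $\Pi$.

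A secondary point: your verification of $C(\{\so^j\})\subset C(\{\boldsymbol\Lambda^j\})$ is slightly tangled. The clean route (which also yields the stationarity claim simultaneously) is to pass to a subsequence so that $\Lambda^{j_k}_{t_{j_k}}\to W$ in $\vard$; since the pull-tight is area-nonincreasing and both sequences are minimizing, $\|W\|(M)=W_\Pi$. By continuity of the map $V\mapsto\Psi_V$ one has $\Psi_W(1,\cdot)_\sharp W = V$; if $W$ were not stationary the flow would strictly decrease its mass, contradicting $\|V\|(M)=W_\Pi$. Hence $W$ is stationary, the pull-tight is trivial at $W$, and $V=W\in C(\{\boldsymbol\Lambda^j\})$.
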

\begin{proof}
We include the proof for completeness. However, the argument is the same as in the proof of \cite{ColdingDeLellis2003-Errata}*{Proposition 2.1}, given the suitable modifications to the equivariant setting.

We first restrict to the compact space
\[
\mathcal{X} \eqdef \{V\in \mathcal{V}^2_G(M) \st \norm{V}(M)\le 2W_\Pi\}
\]
and we denote by $\mathcal{V}_\infty$ the subspace of $G$-stationary varifolds of $\mathcal{X}$, which coincides with the subspace of stationary varifolds by \cref{rem:stationaryGstationary}, and we define $\mathcal{Y}\eqdef \mathcal{X}\setminus \mathcal{V}_\infty$.
Now, for every $V\in \mathcal{Y}$, consider a $G$-equivariant smooth vector field $\xi_V\in\vf_G(M)$ such that $\delta V(\xi_V) < 0$. Up to rescaling, we can assume without loss of generality that $\norm{\xi_V}_{C^k}\le 1/k$, whenever $\vard(V,\mathcal{V}_\infty)\le 2^{-k}$ and $k\ge 1$ is any positive integer.

Then, for every $V\in \mathcal{Y}$, let $0 < \rho(V) < \vard(V,\mathcal{V}_\infty)/2$ be such that $\delta W(\xi_V)<0$ for every $W\in B_{\rho(V)}^{\vard}(V)$. Note that $\{B_{\rho(V)}^{\vard}(V)\}_{V\in \mathcal{Y}}$ is a cover of $\mathcal{Y}$ and thus it admits a subordinate partition of unity $\{\varphi_V\}_{V\in \mathcal{Y}}$. Therefore, for every $V\in \mathcal{Y}$, we can define the smooth $G$-equivariant vector field
\[
H_V\eqdef \sum_{W\in \mathcal{Y}} \varphi_W(V) \xi_W.
\]
Note that the map $\mathcal{Y}\ni V \mapsto H_V \in \vf_G(M)$ is continuous. Moreover, observe that $\delta V(H_V) < 0$ for every $V\in \mathcal{Y}$ and that, if $\vard(V,\mathcal{V}_\infty) \le 2^{-k-1}$ for some positive integer $k\ge 1$, then $\norm{H_V}_{C^k}\le 1/k$. 
This last property follows from the fact that, if $\vard(V,\mathcal{V}_\infty) \le 2^{-k-1}$ for some positive integer $k\ge 1$ and $\varphi_W(V) > 0$ for some $W\in \mathcal{Y}$, then $\vard(W,\mathcal{V}_\infty)\le 2\, \vard(V,\mathcal{V}_\infty) \le 2^{-k}$ and therefore $\norm{\xi_W}_{C^k}\le 1/k$.

As a result, the map $V\mapsto H_V$ defined on $\mathcal{Y}$ can be extended to a continuous function $\mathcal{X}\to \vf_G(M)$ by setting it identically equal to $0$ on $\mathcal{V}_\infty$.
Then, for every $V\in \mathcal{X}$, the flow $\Psi_V\colon[0,+\infty)\to \operatorname{Diff}(M)$ generated by $H_V$ is a $G$-equivariant isotopy. Since the map $V\in \mathcal{X}\mapsto H_V\in \vf_G(M)$ is continuous, then also the map $V\mapsto \Psi_V$ is continuous. In particular the function $V\mapsto \delta(\Psi_V(s,\cdot)_\sharp V)(H_V)$ is continuous as well for all $s\ge 0$ and $\Psi_V(0,\cdot)_\sharp V = V$. 
Hence, for every $V\in \mathcal{Y}$, let $0<\tau(V)\le 1$ be the maximum such that
\[
\delta(\Psi_V(s,\cdot)_\sharp V)(H_V) \le \frac 12 \delta V(H_V) < 0 \ \text{for all $s\in [0,\tau(V)]$}.
\]
The map $\tau\colon \mathcal{Y}\to [0,1]$ is continuous, hence we can redefine $H_V$ multiplying it by $\tau(V)$ in $\mathcal{Y}$, i.e., we reset it to be equal to $\tau(V)H_V$ in $\mathcal{Y}$ and equal to $0$ elsewhere. Note that this gives a well-defined vector field $\mathcal{X}\ni V\mapsto H_V\in \vf_G(M)$, because $\tau$ is bounded in $\mathcal{Y}$ and therefore the newly-defined $H_V$ is continuous if we set it equal to $0$ in $\mathcal{V}_\infty$.
As a result, we also redefine the flow $\Psi_V$ and it holds
\[
\delta(\Psi_V(s,\cdot)_\sharp V) (H_V) < 0 \ \text{for all $V\in \mathcal{Y}$ and $s\in [0,1]$}.
\]

Now let $\{\boldsymbol{\Lambda}^j\}_{j\in\N}=\{\{\Lambda_t^j\}_{t\in I^n}\}_{j\in\N}\subset \Pi$ be a minimizing sequence and define the $G$-equivariant surface $\tilde\Sigma_t^j \eqdef \Psi_{\Lambda_t^j}(1,\Lambda_t^j)$ for all $j\in \N$ and $t\in I^n$. Observe that the map $(t,x)\mapsto \tilde \Phi^j(t,x)\eqdef \Psi_{\Lambda_t^j}(1,x)$ is $C^1$ in the parameter $t$;
however, it is not necessarily smooth, hence $\{\tilde\Sigma_t^j\}_{t\in I^n}$ is not necessarily contained in $\Pi$.
Therefore, for all $j\in\N$, let us define a smooth map $(t,x)\mapsto \Phi^j(t,x)$ by convolving $\tilde \Phi^j$ with a smooth kernel in the parameter $t\in I^n$ in such a way that
\begin{equation} \label{eq:hnearc1}
\norm{\Phi^j(t,\cdot)-\tilde \Phi^j(t,\cdot)}_{C^1} \le \frac{1}{j+1}.
\end{equation}
Note that $\Phi^j(t,\cdot)$ is $G$-equivariant, because we are convoluting in the parameter $t$ and $\tilde\Phi^j(t,\cdot)$ is $G$-equivariant.
Then, define the $G$-equivariant surface $\Sigma_t^j\eqdef \Phi^j(t,\Lambda_t^j)$ for all $j\in\N$ and $t\in I^n$. By smoothness of the map $t\mapsto \Phi^j(t,\cdot)$, we have that $\{\Sigma_t^j\}_{t\in I^n}\in\Pi$ for all $j\in\N$.
Moreover, by \eqref{eq:hnearc1}, it holds that
\begin{equation} \label{eq:TildeNearSigma}
\lim_{j\to+\infty} \sup_{t\in I^n} \vard(\Sigma_t^j,\tilde \Sigma_t^j) = 0.
\end{equation}
As a result, using that $\area(\tilde\Sigma_t^j) = \area(\Psi_{\Lambda_t^j}(1,\Lambda_t^j)) \le \area(\Lambda_t^j)$, we have that
\[
m_0\le \limsup_{j\in\N}\sup_{t\in I^n} \area(\Sigma_t^j) = \limsup_{j\in\N}\sup_{t\in I^n} \area(\tilde\Sigma_t^j) \le \limsup_{j\in\N}\sup_{t\in I^n} \area(\Lambda_t^j) = m_0,
\]
thus, a posteriori, all the inequalities are equalities; in particular $\limsup_{j\in\N}\sup_{t\in I^n} \area(\Sigma_t^j) = m_0$.

Our aim now is to prove that the minimizing sequence $\{\boldsymbol{\Sigma}^j\}_{j\in\N}=\{\{\Sigma_t^j\}_{t\in I^n}\}_{j\in \N}$ has the properties claimed in the statement. Let $t_j\in I^n$ be such that $\{\Sigma_{t_j}^j\}_{j\in\N}$ is a min-max sequence. Up to subsequence, $\Sigma_{t_j}^j$ converges in the sense of varifolds to a varifold $V$, which coincides with the limit in the sense of varifolds of $\tilde\Sigma_{t_j}^j$ by \eqref{eq:TildeNearSigma}.
Possibly passing to a subsequence, we can also assume that $\Lambda_{t_j}^j$ converges in the sense of varifolds to some varifold $W$ and, by continuity of the map $\Psi$, we have that $\Psi_W(1,\cdot)_\sharp W = V$.

First consider the case when $W$ is not stationary.
Then $\norm{W}(M) = m_0$, but on the other hand it holds
\begin{align*}
m_0 &= \lim_{j\to+\infty} \area(\tilde\Sigma_{t_j}^j) = \norm{V}(M) = \norm{\Psi_W(1,\cdot)_\sharp W}(M) \\
&= \norm{W}(M) + \int_0^1 \delta(\Psi_W(s,\cdot)_\sharp W)(H_W) \de s < m_0,
\end{align*}
which is a contradiction. As a result $W$ is stationary and therefore $W=\Psi_W(1,\cdot)_\sharp W= V$, which implies that $V$ is stationary, as desired. Moreover, since $\Lambda_{t_j}^j\to W =V$, we also get that $C(\{\so^j\}_{j\in\N}) \subset C(\{\boldsymbol{\Lambda}^j\}_{j\in\N})$.
\end{proof}

\subsection{Min-max sequence $G$-almost minimizing in annuli}

The second step is to prove that we can further assume that the min-max sequence found in the previous section has a certain ``regularizing property'', namely that of being $G$-almost minimizing in suitable subsets. The precise statements and definitions are given below.

\begin{definition}
Given $\varepsilon >0$, a $G$-equivariant open set $U\subset M$ (namely $\selem(U) = U$ for all $\selem\in G$), and a $G$-equivariant surface $\Sigma\subset U$, we say that $\Sigma$ is \emph{$(G,\varepsilon)$-almost minimizing} in $U$ if there does not exist any $G$-equivariant isotopy $\Psi\colon[0,1]\to\operatorname{Diff}(M)$ supported in $U$ such that
\begin{itemize}
\item $\area(\Psi(s,\Sigma)) \le \area(\Sigma) + \varepsilon/2^{n+2}$ for all $s\in[0,1]$;
\item $\area(\Psi(1,\Sigma)) \le \area(\Sigma) - \varepsilon$.
\end{itemize}
A sequence $\{\Sigma^j\}_{j\in\N}$ is said to be \emph{$G$-almost minimizing} in $U$ if there exists a sequence of positive numbers $\varepsilon_j\to 0$ such that $\Sigma^j$ is $(G,\varepsilon_j)$-almost minimizing in $U$.
\end{definition}
\begin{remark}
Observe that the constants in the definition above differ from the ones in \cite{ColdingDeLellis2003}*{Definition 3.2}. This is due to the fact that we are considering $n$-dimensional sweepouts and thus we need these different constants for the proof of \cref{lem:AlmostMinSeqInAdmissible}.
\end{remark}

\begin{definition}
We say that a subset $\mathrm{An}\subset M$ is a \emph{$G$-equivariant annulus} if there are $x\in M$ and $0<r<s$ such that $\mathrm{An} = \bigcup_{g\in G} g(\mathrm{An}(x,r,s))$, where $\mathrm{An}(x,r,s) = B_s(x) \setminus \overline{B_r(x)}$ is the open annulus of center $x$, inner radius $r$ and outer radius $s$. 
Note that, for $r,s$ sufficiently small $\bigcup_{g\in G} g(\mathrm{An}(x,r,s))$ consists of a finite union of disjoint open annuli (note that some annuli in the union could possibly overlap). 
Given $x\in M$, we denote by $\mathcal{AN}^G_{r}(x)$ the union of all $G$-equivariant annuli $\bigcup_{g\in G} g(\mathrm{An}(x,r_1,r_2))$ as the radii $r_1,r_2$ vary in the range $0<r_1<r_2\le r$.

We say that a family of $G$-equivariant annuli is \emph{$L$-admissible} for some positive integer $L$, if it consists of $L$ $G$-equivariant annuli $\mathrm{An}_1,\ldots, \mathrm{An}_L$ such that $\mathrm{An}_i = \bigcup_{g\in G} g(\mathrm{An}(x,r_i,s_i))$ for some $x\in M$ and $r_i,s_i>0$ with $r_{i+1}>2s_i$ for all $i=1,\ldots,L-1$.
\end{definition}
\begin{definition}
We say that a surface $\Sigma\subset M$ is $(G,\varepsilon)$-almost minimizing in an $L$-admissible family $\mathcal{A}$ of $G$-equivariant annuli if it is $(G,\varepsilon)$-almost minimizing in at least one $G$-equivariant annulus of $\mathcal{A}$. Moreover, we say that a sequence $\{\Sigma^j\}_{j\in\N}$ is $G$-almost minimizing in an $L$-admissible family $\mathcal{A}$ of $G$-equivariant annuli if there exists a sequence $\varepsilon_j\to 0 $ such that $\Sigma^j$ is $(G,\varepsilon_j)$-almost minimizing in $\mathcal{A}$; equivalently, $\{\Sigma^j\}_{j\in\N}$ is $G$-almost minimizing in $\mathcal{A}$ if it is $G$-almost minimizing in at least one $G$-equivariant annulus of $\mathcal{A}$.
\end{definition}

\begin{lemma} [{cf. \cite{ColdingGabaiKetover2018}*{Lemma A.1}}] \label{lem:AlmostMinSeqInAdmissible}
Let $\{\so^j\}_{j\in\N}=\{\{\Sigma_t^j\}_{t\in I^n}\}_{j\in \N}$ be a minimizing sequence obtained from \cref{prop:ConvergenceToStationary},
then there exists $t_j\in I^n$ for all $j\in\N$ such that $\{\Sigma^j_{t_j}\}_{j\in\N}$ is a $G$-equivariant min-max sequence that is $G$-almost minimizing in every $L$-admissible family of $G$-equivariant annuli, where $L=(3^n)^{3^n}$.
\end{lemma}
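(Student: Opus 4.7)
The plan is to argue by contradiction, adapting the Pitts--Colding--De Lellis combinatorial argument to $n$-parameter sweepouts and to the equivariant setting provided by \cref{prop:ConvergenceToStationary}. Suppose that for every sequence of potential choices $t_j \in I^n$ yielding a min-max sequence, some $L$-admissible family of $G$-equivariant annuli fails the $(G,\varepsilon_j)$-almost minimizing property. Then at every such $t_j$ and for every annulus $\mathrm{An}_\ell$ in the failing family, there is a $G$-equivariant isotopy supported in $\mathrm{An}_\ell$ that decreases the area of $\Sigma^j_{t_j}$ by $\varepsilon_j$ while keeping the excess below $\varepsilon_j/2^{n+2}$. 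The goal is to splice these local deformations into a global $G$-equivariant isotopy $\Phi^j \colon I^n \times M \to M$ (with $\Phi^j_t = \id$ on $\partial I^n$) so that $\{\Phi^j_t(\Sigma^j_t)\}_{t\in I^n} \in \Pi$ and $\sup_t \area(\Phi^j_t(\Sigma^j_t)) \le W_\Pi - \varepsilon_j/2$, which would contradict the definition of $W_\Pi$.

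The construction proceeds in two stages. First, restrict attention to the top sub-level $K^j_\delta = \{t \in I^n : \area(\Sigma^j_t) \geq W_\Pi - \delta\}$, where $\delta$ is small and goes to zero with $j$; outside this set $\Sigma^j_t$ is already well away from the width and one applies the identity, smoothly cut off in a $G$-equivariant way across $\partial K^j_\delta$. Inside $K^j_\delta$, one subdivides $I^n$ into small sub-cubes of side comparable to the modulus of continuity of $t \mapsto \Sigma^j_t$, assigning to each vertex a local reduction isotopy as above. The key geometric point is that the annuli in an $L$-admissible family are pairwise disjoint and in fact separated (by $r_{i+1} > 2s_i$), so isotopies supported in distinct annuli commute and can be freely composed without interference. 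The almost-minimizing excess tolerance $\varepsilon_j/2^{n+2}$ is calibrated precisely so that convex combinations of up to $2^n$ such bounded-excess deformations over the corners of an $n$-cube stay below $W_\Pi$, giving room for the partition-of-unity interpolation in $t$.

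The main obstacle is combinatorial: when passing between adjacent sub-cubes, the locally chosen deformations may be supported in conflicting annuli, preventing a direct interpolation. One resolves this by a reassignment scheme, switching to a different annulus of the $L$-admissible family whose support is disjoint from those used in the neighbouring cubes. Running this reassignment inductively across the $3^n$ relative positions within each sub-cube and across the $3^n$ possible nearest-neighbour configurations consumes at most $(3^n)^{3^n}$ annuli, which is exactly why $L$ is chosen this large. Maintaining $G$-equivariance throughout is essentially automatic: all chosen annuli are $G$-equivariant, individual isotopies can be $G$-averaged as in \cref{rem:GequivExtension}, and the partition of unity acts only in the $t$-variable, so both the composition and the reassignment preserve the equivariance of $\Phi^j_t$. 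Putting everything together yields the contradicting competitor sweepout in $\Pi$ and establishes the lemma.
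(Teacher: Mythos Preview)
Your proposal is correct and follows essentially the same route as the paper: a contradiction argument on the high-area set, a fine subdivision of $I^n$ via a cubical CW structure, Pitts' combinatorial selection (the paper invokes \cite{Pitts1981}*{Proposition 4.9} where you describe the ``reassignment scheme''), and a partition-of-unity interpolation using the $2^n$-overlap bound against the $\varepsilon/2^{n+2}$ excess tolerance. The only minor inaccuracy is that the isotopies coming from the failure of $(G,\varepsilon)$-almost minimality are already $G$-equivariant by definition, so no averaging as in \cref{rem:GequivExtension} is needed.
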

\begin{proof}
Let $\{\so^j\}_{j\in\N}$ be the minimizing sequence given by \cref{prop:ConvergenceToStationary}. Fixed any $J>0$, we prove that there exist $j>J$ and $t_j\in I^n$ such that $\Sigma^j_{t_j}$ is $(G,1/J)$-almost minimizing in every $L$-admissible family of $G$-equivariant annuli and $\area(\Sigma^j_{t_j}) \ge m_0 - 1/J$.
To this purpose, consider any $j>J$ and define $K\eqdef \{t\in I^n\st \area(\Sigma^j_t) \ge m_0 - 1/J\}$. Assume that for all $t\in K$ there exists an $L$-admissible family $\mathcal{A}_t=\{\mathrm{An}_t^1,\ldots,\mathrm{An}_t^L\}$ of $G$-equivariant annuli where $\Sigma_t^j$ is not $(G,1/J)$-almost minimizing, namely for all $t\in K$ and $i=1,\ldots,L$ there exists a $G$-equivariant isotopy $\Psi_t^i\colon[0,1]\to\operatorname{Diff}(M)$ supported in $\mathrm{An}_t^i$ such that
\begin{itemize}
    \item $\area(\Psi_t^i(s,\Sigma_t^j)) \le \area(\Sigma_t^j) + 1/(2^{n+2}J)$ for all $s\in[0,1]$;
    \item $\area(\Psi_t^i(1,\Sigma_t^j)) \le \area(\Sigma_t^j) - 1/J$.
\end{itemize}
By continuity in the sense of varifolds of the map $t\mapsto \Sigma_t^j$, for every $t\in K$ we can find a neighborhood $U_t\subset I^n$ such that
\begin{itemize}
    \item $\area(\Psi_t^i(s,\Sigma_r^j)) \le \area(\Sigma_r^j) + 1/(2^{n+1}J)$ for all $s\in[0,1]$, $r\in U_t$;
    \item $\area(\Psi_t^i(1,\Sigma_r^j)) \le \area(\Sigma_r^j) - 1/(2J)$ for all $r\in U_t$.
\end{itemize}
Since $\{U_t\}_{t\in K}$ is an open cover of $K$ and $K$ is compact, we can extract a finite subcover $\mathcal{U}$. We now want to find another cover $\mathcal{O}$ of $K$ and for every $O\in\mathcal{O}$ we want to choose a $G$-equivariant annulus $\mathrm{An}_O$ such that
\begin{enumerate}[label={\normalfont(\roman*)}]
\item\label{opencoverK:i} for all $O\in\mathcal{O}$ there exists $U_t\in\mathcal{U}$ such that $O\subset U_t$ and $\mathrm{An}_O = \mathrm{An}_t^i$ for some $i=1,\ldots,L$;
\item\label{opencoverK:ii} for all $O_1,O_2\in\mathcal{O}$ with nonempty intersection we have that $\mathrm{An}_{O_1} \cap \mathrm{An}_{O_2} = \emptyset$.
\end{enumerate}

To this purpose, we follow \cite{Pitts1981}*{Chapter 4}). For all $m\in\N$, let $I(1,m)$ be the CW complex structure, supported on the $1$-dimensional interval $I^1=[0,1]$, whose $0$-cells are $[0], [3^{-m}], [2\cdot 3^{-m}], \ldots, [1-3^{-m}],[1]$ and whose $1$-cells are the intervals $[0,3^{-m}], [3^{-m}, 2\cdot 3^{-m}], \ldots, [1-3^{-m}, 1]$. Then consider the CW complex structure $I(n,m)$ on the $n$-dimensional interval $I^n$, whose $p$-cells for $0\le p\le n$ can be written as $\sigma_1\otimes\sigma_2\otimes\ldots\otimes \sigma_n$, where $\sigma_1,\ldots,\sigma_n$ are cells of the CW complex $I(1,m)$ such that $\sum_{i=1}^n \dim(\sigma_i) = p$.

For every cell $\sigma = \sigma_1\otimes\sigma_2\otimes\ldots\otimes \sigma_n\in I(n,m)$, let $T(\sigma)$ be the open subset of $I_n$ defined as $T(\sigma)\eqdef T(\sigma_1)\times T(\sigma_2)\times \ldots \times T(\sigma_n)$, where 
\[
T(\sigma_i) \eqdef \begin{cases}
                       (x-3^{-m-1},x+3^{-m-1}) & \text{if $\sigma_i = [x]$ is a $0$-cell}\\
                       (x,y) & \text{if $\sigma_i = [x,y]$ is a $1$-cell}. 
                   \end{cases}
\]
Note that $\{T(\sigma)\st \sigma\in I(n,m), T(\sigma)\cap K \not=\emptyset\}$ is an open cover of $K$. Then define $\mathcal{O}$ to be such an open cover for some $m$ sufficiently large such that $3^{-m}\sqrt{n}$ is less than the Lebesgue number of $\mathcal{U}$, thus $O$ has diameter less than the Lebesgue number of $\mathcal{U}$ for all $O\in\mathcal{O}$.
Then, for all $O\in\mathcal{O}$, let $\mathcal{A}_O$ be an $L$-admissible family of $G$-equivariant annuli in $M$ such that there exists $U_t\in\mathcal{U}$ with $O\subset U_t$ and $\mathcal{A}_O=\mathcal{A}_t$.

Now, note that $T(\sigma_1)\cap T(\sigma_2)\not=\emptyset$ for some $\sigma_1,\sigma_2\in I(n,m)$ if and only if $\sigma_1$ and $\sigma_2$ are faces (possibly of different dimension) of a cell in $I(n,m)$. 
We can thus exploit the following proposition to associate to every $O=T(\sigma)\in\mathcal{O}$ a $G$-equivariant annulus $\mathrm{An}_O\in\mathcal{A}_O$.

\begin{proposition} [\cite{Pitts1981}*{Proposition 4.9}] \label{prop:PittsChoice}
For every $\sigma\in I(n,m)$, let $\mathcal{A}(\sigma)$ be an $L$-admissible family of $G$-equivariant annuli in $M$ with $L=(3^n)^{3^n}$. Then for every $\sigma\in I(n,m)$ it is possible to choose a $G$-equivariant annulus $\mathrm{An}(\sigma)\in\mathcal{A}(\sigma)$ such that $\mathrm{An}(\sigma_1)\cap \mathrm{An}(\sigma_2)$ for every $\sigma_1\not=\sigma_2\in I(n,m)$ that are faces of a cell in $I(n,m)$.
\end{proposition}

Hence, for all $O = T(\sigma)\in \mathcal{O}$, let us define $\mathrm{An}_O \eqdef \mathrm{An}(\sigma)$, where $\mathrm{An}(\sigma)$ is given from \cref{prop:PittsChoice}. This proves the existence of the open cover $\mathcal{O}$ of $K$ with properties \ref{opencoverK:i} and \ref{opencoverK:ii} above. Moreover, by definition of $\mathcal{U}$ and $\mathcal{O}$, for all $O\in\mathcal{O}$ there exists a $G$-equivariant isotopy $\Psi_O\colon[0,1]\to \operatorname{Diff}(M)$ supported in $\mathrm{An}_O$ such that
\begin{itemize}
\item $\area(\Psi_O(s,\Sigma_r^j))\le \area(\Sigma_r^j) + 1/(2^{n+1}J)$ for all $s\in[0,1]$, $r\in O$;
\item $\area(\Psi_O(1,\Sigma_r^j))\le \area(\Sigma_r^j) - 1/(2J)$ for all $r\in O$.
\end{itemize}

Now, for all $O\in \mathcal O$ choose a $C^\infty$ function $\varphi_O\colon I^n\to [0,1]$ supported in $O$, such that for all $s\in K$ there exists $O\in\mathcal{O}$ such that $\varphi_O(s) = 1$.
Then, given any $t\in I^n$, consider the function $\Phi_t\in \operatorname{Diff}(M)$ defined as
\[
\Phi_t(x) \eqdef \begin{cases}
                     \Psi_O(\varphi_O(t), x) & \text{if $x\in \mathrm{An}_O$ for some $O\in\mathcal{O}$ with $t\in O$,}\\
                     x &\text{if $t\not\in O$ for all $O\in\mathcal O$.}
                 \end{cases}
\]
Observe that $\Phi_t$ is well-defined since $\mathrm{An}_{O_1}\cap \mathrm{An}_{O_2}=\emptyset$ whenever $O_1,O_2\in\mathcal O$ are not disjoint.
Moreover, $t\mapsto \Phi_t$ is a $G$-equivariant isotopy with $\Phi_t=\id$ for $t\in\partial I^n$. Indeed, note that $d(K,\partial I^n) > 0$, hence we can choose $m$ possibly larger in such a way that $O\cap \partial I^n=\emptyset$ for all $O\in\mathcal O$.
Therefore, we can define $\tilde\Sigma_t^j = \Phi_t(\Sigma_t^j)$ and we have that $\{\tilde\Sigma_t^j\}_{t\in I^n}\in \Pi$. 
Now note that, for every $t\in K$, there exists $O\in \mathcal O$ such that $\varphi_O(t) = 1$, moreover $t\in O$ for at most other $2^n-1$ sets $O\in\mathcal O$. As a result, we can estimate
\[
\area(\tilde\Sigma_t^j) \le \area(\Sigma_t^j) - \frac{1}{2J} + \frac{2^n-1}{2^{n+1}J} =\area(\Sigma_t^j) - \frac{1}{2^{n+1}J}
\]
for all $t\in K$, and
\[
\area(\tilde\Sigma_t^j) \le \area(\Sigma_t^j) + \frac{2^n}{2^{n+1}J} < m_0-\frac 1{2J}
\]
for all $t\in I^n\setminus K$.
By arbitrariness of $j$, this would give $\liminf_{j\to+\infty}\sup_{t\in I^n} \area(\tilde\Sigma_t^j) < m_0$, which is a contradiction. This proves that, for every $J>0$, there exists $j>J$ and $t_j\in I^n$ such that $\Sigma_{t_j}^j$ is $(G,1/J)$-almost minimizing in every $L$-admissible family of $G$-equivariant annuli and $\area(\Sigma_{t_j}^j) \ge m_0-1/J$. In particular, we have that $\{\Sigma^j_{t_j}\}_{j\in\N}$ is a $G$-equivariant min-max sequence that is $G$-almost minimizing in every $L$-admissible family of $G$-equivariant annuli.
\end{proof}

\subsection{Regularity and genus bound as a consequence of $G$-almost minimality}\label{sec:ProofRegularityGenusFinal}

The third and final step is to show that the properties of the min-max sequence obtained in the previous sections are sufficient to prove that this sequence converges to a $G$-equivariant free boundary minimal surface (possibly with multiplicity) for which the genus bounded in \cref{thm:EquivMinMax} holds.

\begin{proposition} \label{prop:AlmostMinSeq}
In the setting of \cref{thm:EquivMinMax}, given a $G$-equivariant min-max sequence $\{\Sigma^j\}_{j\in\N}$ that is $G$-almost minimizing in every $L$-admissible family of $G$-equivariant annuli for $L=(3^n)^{3^n}$ (as the one obtained in \cref{lem:AlmostMinSeqInAdmissible}), there exists a $G$-equivariant function $r\colon M \to \R^+$ such that (up to subsequence)  $\{\Sigma^j\}_{j\in\N}$ is $G$-almost minimizing in $\mathrm{An}\in \mathcal{AN}_{r(x)}^G(x)$ for all $x\in M$.
Moreover, $\{\Sigma^j\}_{j\in\N}$ converges in the sense of varifolds as $j\to+\infty$ to $\limitv = \bigcup_{i=1}^k m_i\limitv_i$, where $\limitv_i$ is a $G$-equivariant free boundary minimal surface and $m_i$ is a positive integer for all $i=1,\ldots,k$, and the genus bound in \cref{thm:EquivMinMax} holds.
\end{proposition}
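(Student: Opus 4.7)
The plan is to separate the argument into two parts: first, extract the $G$-equivariant function $r$ directly from the hypothesis via a short combinatorial construction; then invoke the regularity and genus-bound machinery of Ketover (adapted from Colding--De Lellis, De Lellis--Pellandini and Simon--Smith) to produce the limit free boundary minimal surface and the genus inequality.

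For the construction of $r$, fix $x \in M$ and choose $r(x) > 0$ so small that the radii $r_i \eqdef \tfrac{5}{2} \cdot 3^{i-1-L} r(x)$ and $s_i \eqdef 3^{i-L} r(x)$ for $i=1,\ldots,L$ satisfy $r_i < s_i$ and $r_{i+1} > 2 s_i$, and the sets $\mathrm{An}_i \eqdef \bigcup_{g \in G} g(\mathrm{An}(x, r_i, s_i))$ are bona fide $G$-equivariant annuli. These form an $L$-admissible family of $G$-equivariant annuli contained in $\mathcal{AN}^G_{r(x)}(x)$, since $s_L = r(x)$. By hypothesis, $\{\Sigma^j\}_{j\in\N}$ is $G$-almost minimizing in at least one such $\mathrm{An}_i$, which gives the first claim. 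The function $r$ is made $G$-equivariant by defining it on a fundamental domain of the $G$-action and setting $r \circ g = r$ for all $g \in G$.

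Once $r$ is in hand, the Simon--Smith--Colding--De Lellis regularity scheme applies in its equivariant free boundary form developed by Ketover~\cite{Ketover2016fb} (see also~\cite{Ketover2016equiv} in the closed case). The $G$-almost minimality in small annuli around every $x \in M$ allows the construction of $G$-equivariant stable replacements inside each such annulus; combined with the interior and boundary curvature estimates for stable free boundary minimal surfaces (available under property $(\mathfrak{P})$ via~\cite{AmbCarSha18Compactness}) and the stationarity from \cref{prop:ConvergenceToStationary}, one obtains that the varifold limit $\limitv$ is supported on a smooth $G$-equivariant free boundary minimal surface $\limitv_1 \cup \ldots \cup \limitv_k$ with positive integer multiplicities $m_i$ coming from the constancy theorem on each connected component. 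The genus inequality then follows from the De Lellis--Pellandini lifting argument in its equivariant free boundary form, exactly as in \cite{Ketover2016fb}*{Theorem 3.2}; the split between $\mathcal O$ and $\mathcal N$ reflects the fact that nonorientable limit components lift to an orientation double cover in the local graphical convergence, each sheet contributing an extra $-1$ to the Euler characteristic count.

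The main obstacle is not genuinely new analysis, since the hard regularity and genus results are quoted from Ketover; the task is checking that their arguments carry over verbatim to the present setup of $n$-parameter sweepouts, strictly mean convex boundary, and orientation-preserving finite symmetry group. The parameter $n$ enters only combinatorially through $L = (3^n)^{3^n}$, leaving the per-point replacement arguments unchanged; strict mean convexity ensures property $(\mathfrak{P})$ and the applicability of~\cite{AmbCarSha18Compactness}; and the orientation-preserving hypothesis guarantees that the singular set $\mathcal{S}$ decomposes into isolated points and smooth curves as in \cite{Ketover2016equiv}*{Lemmas 3.3--3.5}, which enables the equivariant cutoff arguments needed to handle $\mathcal{S}$ in the replacement step.
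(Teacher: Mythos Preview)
Your second part---invoking Ketover's equivariant regularity and genus-bound machinery, together with the references to Colding--De Lellis, Li, and De Lellis--Pellandini---matches the paper's own approach and is fine.

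The first part, however, has a genuine gap. The conclusion of the proposition (as the paper's proof makes explicit, and as the replacement-based regularity theory requires) is that the subsequence is $G$-almost minimizing in \emph{every} $\mathrm{An}\in\mathcal{AN}^G_{r(x)}(x)$, not merely in one. Your construction fixes a single $L$-admissible family at each $x$ and concludes almost minimality in \emph{at least one} of its members; this is strictly weaker and does not suffice to run the replacement argument, which needs stable replacements in a sequence of concentric annuli with inner radius shrinking to zero. Moreover, even for that weaker conclusion your argument is incomplete: the annulus $\mathrm{An}_i$ in which $\Sigma^j$ is $(G,\varepsilon_j)$-almost minimizing may depend on $j$, so pigeonhole only gives you a subsequence \emph{depending on $x$}, not a single global subsequence working for all $x\in M$ simultaneously.

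The paper instead cites \cite{ColdingGabaiKetover2018}*{Lemma A.3}, whose proof is by contradiction: if no such $r$ exists (for any subsequence), then at some accumulation point $x\in M$ one finds, for infinitely many $j$, annuli of arbitrarily small outer radius in which $\Sigma^j$ fails to be $(G,\varepsilon_j)$-almost minimizing; stacking $L$ of these with the required gap $r_{i+1}>2s_i$ produces an $L$-admissible family in which $\Sigma^j$ is not $(G,\varepsilon_j)$-almost minimizing in \emph{any} member, contradicting the hypothesis. This contradiction argument is what simultaneously delivers both the ``every annulus'' conclusion and the global subsequence.
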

\begin{proof}
Thanks to \cite{ColdingGabaiKetover2018}*{Lemma A.3}, we have that there exists a $G$-equivariant function $r\colon M\to \R^+$ such that (up to subsequence) $\{\Sigma^j\}_{j\in\N}$ is $G$-almost minimizing in every $\mathrm{An}\in \mathcal{AN}_{r(x)}^G(x)$ for all $x\in M$. Indeed, the only difference in the proof is that, instead of considering annuli $\mathrm{An}(x,r,s)$ centered at some point $x\in M$, here we have to consider $G$-equivariant annuli $\bigcup_{g\in G}g(\mathrm{An}(x,r,s))$ ``centered'' at some point $x\in M$.

At this point the proof of the second part of the statement follows using the $G$-almost minimality of $\{\Sigma^j\}_{j\in\N}$.
Indeed, given a $G$-equivariant min-max sequence $\{\Sigma^j\}_{j\in\N}$ that is $G$-almost minimizing in $\mathrm{An}\in \mathcal{AN}_{r(x)}^G(x)$ for all $x\in M$, we get that (up to subsequence) $\Sigma^j$ converges as $j\to+\infty$ to $\limitv$, which is a finite union of $G$-equivariant free boundary minimal surfaces (possibly with multiplicity), and the genus bound in \cref{thm:EquivMinMax} holds.
The proof of the regularity of $\limitv$ can be found in \cite{ColdingDeLellis2003}*{Theorem 7.1} for the closed case and in \cite{Li2015}*{Proposition 4.11} for the free boundary case.
The proof of the genus bound is instead contained in \cite{DeLellisPellandini2010}, \cite{Ketover2019}. 
The adaptations to the equivariant setting are the object of \cite{Ketover2016equiv}*{Section 4, after Proposition 4.12} and \cite{Ketover2016fb}*{Section 7.2}.
\end{proof}

\begin{remark} \label{rem:SymmetriesLocally}
Observe that the difficulties in the equivariant setting arise around the points in the singular locus of $G$, i.e., points $x\in M$ such that there exists $h\in G$ with $h(x)=x$. 
Since we required the isometries in $G$ to be orientation-preserving, we have that the limit surface can intersect the singular locus only in a point where, locally, the symmetry group is conjugate to $\Z_k$ or $\mathbb D_k$ for some $k\ge 2$ (see \cite{CooperHodgsonKerckhoff2000}*{Theorem 2.4} or \cite{Ketover2016equiv}*{Lemma 3.3}). This is the reason why we add the assumption of ``orientation-preserving'' to the isometries in $G$, namely to exclude the case in which locally the singular locus is a plane.
\end{remark}

\section{Deformation theorem}

In this section we prove that, in the setting of \cref{thm:EquivMinMax}, it is possible to modify a minimizing sequence in a way that, so to say, it avoids a given free boundary minimal surface with $G$-equivariant index greater or equal than $n+1$. The deformation theorem, \cref{thm:Deformation}, is the analogue of Deformation Theorem A in \cite{MarquesNeves2016}.
Before presenting it, we recall three lemmas contained in \cite{MarquesNeves2016} that are needed in the proof.
\begin{lemma} \label{lem:IndexDiffeo}
Let $(M^3,g)$ be a three-dimensional Riemannian manifold with strictly mean convex boundary and let $G$ be a finite group of isometries of $M$.
Given a finite union $\avoids^2\subset M$ of $G$-equivariant free boundary minimal surfaces (possibly with multiplicity) with $\ind_G(\operatorname{spt}(\avoids))\ge n+1$, there exist $0<c_0<1$, $\delta>0$ and a smooth family $\{F_v\}_{v\in \overline{B}^{n+1}}\subset \operatorname{Diff}_G(M)$ of $G$-equivariant diffeomorphisms with:
\begin{enumerate}[label={\normalfont(\roman*)}]
\item $F_0=\id$, $F_{-v} = F_v^{-1}$ for all $v\in\overline{B}^{n+1}$;
\item for any $V\in \overline{B}_{2\delta}^{\vard}(\avoids)$, the smooth function $A^V\colon\overline{B}^{n+1}\to[0,+\infty)$ given by \[ A^V(v) = \norm{(F_v)_\# V}(M)\] has a unique maximum at $m(V)\in B^{n+1}_{c_0/\sqrt{10}}(0)$ and it satisfies $-c_0^{-1}\id \le \Diff^2 A^V(v) \le -c_0 \id$ for all $v\in\overline{B}^{n+1}$.
\end{enumerate}
\end{lemma}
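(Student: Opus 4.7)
The plan is to construct $\{F_v\}_{v \in \overline B^{n+1}}$ as the family of time-$1$ flows of autonomous $G$-equivariant vector fields depending linearly on $v$, with the directions chosen to realize the $n+1$ independent negative modes of the second variation guaranteed by the hypothesis $\ind_G(\operatorname{spt}(\avoids)) \ge n+1$.

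First I would unpack the hypothesis. Writing $\avoids = \sum_{i=1}^k m_i \limitv_i$ with each $\limitv_i$ a connected $G$-equivariant free boundary minimal surface, the identity $\ind_G(\operatorname{spt}(\avoids)) = \sum_i \ind_G(\limitv_i)$ produces $G$-equivariant normal sections $Y_1, \ldots, Y_{n+1}$ of $\operatorname{spt}(\avoids)$ on which the weighted quadratic form $\sum_i m_i Q^{\limitv_i}$ is negative definite. By \cref{rem:GequivExtension} each $Y_j$ extends to a vector field $X_j \in \vf_G(M)$, and after a linear change of basis among the $X_j$ I may arrange the normalization
\[
\sum_i m_i\, Q^{\limitv_i}(X_j^\perp|_{\limitv_i}, X_k^\perp|_{\limitv_i}) = -\delta_{jk}.
\]
For $v = (v_1, \ldots, v_{n+1}) \in \overline B^{n+1}$ set $X_v \eqdef \sum_j v_j X_j \in \vf_G(M)$ and let $F_v$ be its time-$1$ flow. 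Because $X_v$ is autonomous and $G$-equivariant, $F_v \in \operatorname{Diff}_G(M)$ (cf.\ \cref{rem:GequivVecFieldToFlow}); the one-parameter group property yields $F_0 = \id$ and $F_{-v} = F_v^{-1}$, and $(v,x) \mapsto F_v(x)$ is smooth by smooth dependence of flows on parameters.

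Setting $A^V(v) = \norm{(F_v)_\# V}(M)$, the first and second variation formulas applied at $v=0$ to $V = \avoids$ give $\grad A^\avoids(0) = 0$ (from $G$-stationarity of each $\limitv_i$) and $\Diff^2 A^\avoids(0) = -\id$ by the normalization above. To propagate this to varifolds $V$ near $\avoids$, I would invoke the fact that both $\grad A^V(v)$ and $\Diff^2 A^V(v)$ can be written as integrals over the Grassmann bundle $G_2(M)$ of functions that depend smoothly on $v$ and are uniformly bounded in $C^0$ for $v \in \overline B^{n+1}$; together with the fact that $\vard$ metrizes varifold weak convergence, this yields joint continuity of $(\grad A^V, \Diff^2 A^V)$ on the compact set $\overline B^{n+1} \times \overline B^\vard_{2\delta}(\avoids)$. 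Hence, choosing $\delta > 0$ small and $c_0 \in (0,1)$ slightly less than $1$, one obtains the Hessian bound $-c_0^{-1}\id \le \Diff^2 A^V(v) \le -c_0\id$ for every $V$ with $\vard(V, \avoids) \le 2\delta$ and every $v \in \overline B^{n+1}$, while $\abs{\grad A^V(0)}$ can be made arbitrarily small after a further shrinking of $\delta$. Strict concavity then delivers a unique maximizer $m(V)$, and combining the gradient bound with the upper Hessian bound via Taylor's theorem forces $m(V) \in B^{n+1}_{c_0/\sqrt{10}}(0)$.

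The main obstacle is the uniform continuity of $\Diff^2 A^V$ in $V$ across the full $\vard$-ball: pointwise continuity from the definition is immediate, but the statement requires this Hessian bound to hold uniformly for $v$ ranging over the compact ball $\overline B^{n+1}$. This will be handled by expressing $\Diff^2 A^V(v)$ as a pairing of $V$ against a family of $C^0$-integrands on $G_2(M)$ that varies continuously in $(v,V)$, so that compactness of $\overline B^{n+1}$ upgrades pointwise continuity to uniform estimates; everything else then reduces to a standard application of Taylor's theorem for the strictly concave smooth function $A^V$ on $\overline B^{n+1}$.
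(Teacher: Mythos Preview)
Your approach matches the paper's: extract $n+1$ $G$-equivariant negative directions, extend them to $\vf_G(M)$ via \cref{rem:GequivExtension}, take their flows to build $\{F_v\}$, and then appeal to continuity of $A^V$ in $(v,V)$. The paper simply records the equivariant input and cites \cite{MarquesNeves2016}*{Proposition 4.3} for the analytic conclusion, so your plan is a direct expansion of that reference.

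There is, however, a genuine gap in your argument for the uniform Hessian bound. You compute $\Diff^2 A^{\avoids}(0)=-\id$ only at $v=0$, and then claim that joint continuity on $\overline B^{n+1}\times\overline B^{\vard}_{2\delta}(\avoids)$ yields $-c_0^{-1}\id\le \Diff^2 A^V(v)\le -c_0\id$ for \emph{all} $v\in\overline B^{n+1}$. Continuity in $(v,V)$ does let you pass from $\avoids$ to nearby $V$, but it gives you nothing about $\Diff^2 A^{\avoids}(v)$ itself when $v$ is far from $0$: the third and higher derivatives of the area along the flow contribute terms of order $O(\abs v)$ to the Hessian, and with $\abs v$ as large as $1$ there is no reason these are dominated by the quadratic part. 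Your final paragraph identifies ``uniformity over $\overline B^{n+1}$'' as the obstacle, but the proposed cure (compactness plus continuity in $V$) addresses the wrong variable.

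The standard fix, which is implicit in the construction of \cite{MarquesNeves2016}*{Definition 4.1 and Proposition 4.3}, is a rescaling: replace each $X_j$ by $\varepsilon X_j$ (equivalently, replace $F_v$ by the time-$1$ flow of $\varepsilon X_v$, or restrict to a small ball and dilate back to $\overline B^{n+1}$). Then $A^{\avoids}(v)=\area(\avoids)-\tfrac{\varepsilon^2}{2}\abs v^2+O(\varepsilon^3)$ in $C^2(\overline B^{n+1})$, so for $\varepsilon$ small the Hessian of $A^{\avoids}$ is uniformly close to $-\varepsilon^2\id$ over the whole ball. With this in hand, your continuity-in-$V$ argument goes through, and the location of the maximizer follows from the strict concavity exactly as you describe.
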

\begin{proof}
Using that $\ind_G(\operatorname{spt}(\avoids))\ge n+1$, we can find $G$-equivariant normal vector fields $X_1,\ldots,X_{n+1}$ on $\operatorname{spt}(\avoids)$ such that 
\[
Q^{\operatorname{spt}(\avoids)}\left(\sum_{i=1}^{n+1}a_iX_i,\sum_{i=1}^{n+1}a_iX_i\right) < 0
\]
for all $(a_1,\ldots,a_{n+1})\not=0\in\R^{n+1}$. These vector fields can be extended to $G$-equivariant vector fields defined in all $M$ (see \cref{rem:GequivExtension}).
Furthermore recall that, given the flow $\Phi\colon[0,+\infty)\times M\to M$ of a $G$-equivariant vector field $Y$, we have that $\Phi_t\in \operatorname{Diff}_G(M)$ for all $t\in[0,+\infty)$ (see \cref{rem:GequivVecFieldToFlow}). 
Given these observations, the proof follows exactly as in \cite{MarquesNeves2016}*{Proposition 4.3}, see also \cite{MarquesNeves2016}*{Definition 4.1}.
\end{proof}

The following two lemmas coincide exactly with \cite{MarquesNeves2016}*{Lemmas 4.5 and 4.4}, since the $G$-equivariance does not play any role in these two results. We report them here for the sake of expository convenience.
\begin{lemma}[{\cite{MarquesNeves2016}*{Lemma 4.5}}] \label{lem:PushAway}
In the setting of \cref{lem:IndexDiffeo}, for every $G$-equivariant varifold $V\in \overline{B}_{2\delta}^{\vard}(\avoids)$, let $\Phi^V\colon[0,+\infty)\times \overline{B}^{n+1}\to\overline{B}^{n+1}$ be the one-parameter flow generated by the vector field
\[
u\mapsto -(1-\abs{u}^2)\grad A^V(u), \quad u\in \overline{B}^{n+1},
\]
as defined also in \cite{MarquesNeves2016}*{pp. 476}.
Then, for all $0<\eta<1/4$, there is $T=T(\eta,\delta,\avoids,\{F_v\}_{v\in \overline{B}^{n+1}}, c_0) \ge 0$ such that, for all $V\in \overline{B}_{2\delta}^{\vard}(\avoids)$ and $v\in \overline{B}^{n+1}$ with $\abs{v-m(v)}\ge \eta$, we have 
\[
A^V(\Phi^V(T,v)) < A^V(0) - \frac{c_0}{10} \quad\text{and} \quad \abs{\Phi^V(T,v)} >\frac{c_0}{4}.
\]
\end{lemma}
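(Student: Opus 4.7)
The plan is to treat this as a quantitative ODE analysis of the gradient flow $\dot u = -(1-\abs{u}^2)\grad A^V(u)$ on $\overline{B}^{n+1}$, exploiting that \cref{lem:IndexDiffeo} makes $A^V$ uniformly strongly concave, with Hessian bounds $-c_0^{-1}\id \le \Diff^2 A^V \le -c_0\id$ and unique maximum $m(V)$ satisfying $\abs{m(V)} < c_0/\sqrt{10}$. Integrating the Hessian from $m(V)$, where $\grad A^V = 0$, immediately yields the two pointwise estimates
\[
\scal{\grad A^V(u)}{u - m(V)} \le -c_0 \abs{u - m(V)}^{2}, \qquad c_0\abs{u - m(V)} \le \abs{\grad A^V(u)} \le c_0^{-1}\abs{u - m(V)}.
\]

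First, setting $u(t) \eqdef \Phi^V(t,v)$, $\rho(t) \eqdef \abs{u(t) - m(V)}^{2}$ and $a(t) \eqdef A^V(u(t))$, I would differentiate along the flow to obtain
\[
\dot\rho(t) \ge 2c_0(1 - \abs{u(t)}^{2})\rho(t), \qquad \dot a(t) \le -c_0^{2}(1 - \abs{u(t)}^{2})\rho(t).
\]
In particular $\rho$ is nondecreasing, so $\rho(t) \ge \eta^{2}$ for every $t\ge 0$. Next, I would fix a universal threshold $r\in (0,1)$ (independent of $V$) satisfying both $(r - c_0/\sqrt{10})^{2} > 3/10$ and $r - 2c_0/\sqrt{10} > c_0/4$ — both feasible since $c_0 < 1$ — and define the stopping time $\tau \eqdef \inf\{t\ge 0 : \abs{u(t)} \ge r\}$. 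On $[0,\tau)$ the damping factor satisfies $1 - \abs{u}^{2}\ge 1 - r^{2}$, so the exponential lower bound $\rho(t) \ge e^{2c_0(1-r^{2})t}\eta^{2}$ combined with the a priori upper bound $\rho(t)\le (r + c_0/\sqrt{10})^{2} \le 4$ forces $\tau$ to be finite and bounded above by an explicit $T_0 = T_0(\eta, c_0, r)$.

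For $t\ge \tau$, monotonicity of $\rho$ gives $\rho(t)\ge \rho(\tau)\ge (r - \abs{m(V)})^{2}>3/10$, and therefore $\abs{u(t)}\ge \sqrt{\rho(t)} - \abs{m(V)}\ge r - 2c_0/\sqrt{10} > c_0/4$. For the area inequality, I would combine the upper bound $A^V(u)\le A^V(m(V)) - \tfrac{c_0}{2}\rho$ (from $\Diff^{2} A^V\le -c_0\id$) with the lower bound $A^V(0)\ge A^V(m(V)) - \tfrac{1}{2c_0}\abs{m(V)}^{2}\ge A^V(m(V)) - c_0/20$ (from $\Diff^{2}A^V\ge -c_0^{-1}\id$ together with $\abs{m(V)}^{2}\le c_0^{2}/10$) to deduce
\[
a(t) \le A^V(m(V)) - \tfrac{c_0}{2}\cdot \tfrac{3}{10} \le a(0) + \tfrac{c_0}{20} - \tfrac{3c_0}{20} = a(0) - \tfrac{c_0}{10}.
\]
Setting $T \eqdef T_0$ therefore delivers both inequalities simultaneously.

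The main obstacle I expect is not the ODE itself — standard once strong concavity is in place — but the bookkeeping needed to find a single $r$ and a single $T$ that produce both inequalities for every $V$. Uniformity in $V\in \overline{B}^{\vard}_{2\delta}(\avoids)$ is essentially for free, because every quantitative ingredient above — the Hessian bounds, the bound $\abs{m(V)} < c_0/\sqrt{10}$, and the smoothness of $\{F_v\}$ — is controlled by $(\avoids, \{F_v\}, \delta, c_0)$ via \cref{lem:IndexDiffeo}. The only delicate point is choosing $r$ just above $\sqrt{3/10}$ with enough margin to absorb the $c_0/\sqrt{10}$ terms arising from $\abs{m(V)}$, so that both target inequalities can be met at the same time.
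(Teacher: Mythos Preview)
The paper does not actually prove this lemma: it explicitly imports it verbatim from \cite{MarquesNeves2016}*{Lemma 4.5}, noting that the $G$-equivariance plays no role. So there is no proof in the paper to compare against; you have supplied a complete self-contained argument where the paper simply cites the reference.

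Your argument is correct. The key steps --- integrating the Hessian bounds from the maximum point to obtain the gradient inequalities, showing $\rho(t)=\abs{u(t)-m(V)}^{2}$ is monotone and grows exponentially while $\abs{u}<r$, bounding the stopping time $\tau$ uniformly by $T_0$, and then using the second-order Taylor bounds on $A^V$ at $m(V)$ to convert $\rho>3/10$ into the two stated inequalities --- all go through as written. The feasibility check for $r$ is also fine: for any $c_0\in(0,1)$ one may take, say, $r=0.95$, and both constraints $(r-c_0/\sqrt{10})^2>3/10$ and $r-2c_0/\sqrt{10}>c_0/4$ are satisfied.

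One small notational slip: in your final displayed chain you write $a(0)$ where you mean $A^V(0)$. Since you defined $a(t)=A^V(u(t))$, your $a(0)$ is $A^V(v)$, not $A^V(0)$. The inequality you actually need, and the one your preceding sentence derives, is $A^V(m(V))\le A^V(0)+c_0/20$; substituting that gives $a(t)<A^V(0)-c_0/10$ as required. This is purely a typo and does not affect the argument.
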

\begin{remark}
Note that $\Phi^V$ is smooth since $A^V$ is. Moreover, for all $u\in \overline{B}^{n+1}$, the map $s\mapsto A^V(\Phi^V(s,u))$ is nonincreasing.
\end{remark}

\begin{lemma}[{\cite{MarquesNeves2016}*{Lemma 4.4}}] \label{lem:FarIsFar}
There exists $\overline{\eta}=\overline{\eta}(\delta,\avoids,\{F_v\}_{v\in \overline{B}^{n+1}}) >0$ such that, for any $G$-equivariant varifold $V\in (B_\delta^{\vard}(\avoids))^c$ with
\[
\norm{(F_v)_\# V}(M)\le \norm{V}(M) + \overline{\eta}
\]
for some $v\in \overline{B}^{n+1}$, we have $\vard((F_v)_\#V, \avoids)\ge 2\overline{\eta}$.
\end{lemma}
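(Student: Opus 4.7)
\emph{Proof proposal.} The plan is a contradiction-plus-compactness argument. Suppose no such $\overline{\eta}$ exists; then for each $k\in\N$ there is a $G$-equivariant varifold $V_k$ with $\vard(V_k,\avoids)\ge\delta$ and a point $v_k\in\overline{B}^{n+1}$ satisfying
\[
\norm{(F_{v_k})_\# V_k}(M)\le\norm{V_k}(M)+\tfrac{1}{k}
\quad\text{and}\quad
\vard\bigl((F_{v_k})_\# V_k,\avoids\bigr)<\tfrac{2}{k}.
\]
The second inequality forces $(F_{v_k})_\# V_k\to\avoids$ in $\vard$, and since $M$ is compact the mass is continuous under weak varifold convergence on $M$, so $\norm{(F_{v_k})_\# V_k}(M)\to\norm{\avoids}(M)$. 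Combined with the first inequality, $\liminf_k\norm{V_k}(M)\ge\norm{\avoids}(M)$, and since $\{F_v\}_{v\in\overline{B}^{n+1}}$ is smooth and $\overline{B}^{n+1}$ compact, the Jacobians of $F_{\pm v_k}$ are uniformly bounded, yielding a uniform upper mass bound on $\{V_k\}$ via $V_k=(F_{-v_k})_\#(F_{v_k})_\# V_k$.

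After passing to subsequences I can therefore assume $v_k\to v_\infty\in\overline{B}^{n+1}$ and $V_k\to V_\infty$ in $\vard$, with $V_\infty$ still $G$-equivariant since the class of $G$-equivariant varifolds is weakly closed. Joint $\vard$-continuity of the pushforward $(v,V)\mapsto(F_v)_\# V$---which follows from the smoothness of the family $\{F_v\}$---lets me take the limit of $(F_{v_k})_\# V_k\to\avoids$ to obtain $(F_{v_\infty})_\# V_\infty=\avoids$, equivalently $V_\infty=(F_{-v_\infty})_\#\avoids$. Continuity of mass on the compact $M$ then gives $\norm{V_\infty}(M)=\lim_k\norm{V_k}(M)\ge\norm{\avoids}(M)$, i.e.\ $A^{\avoids}(-v_\infty)\ge A^{\avoids}(0)$.

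The key step is then to invoke \cref{lem:IndexDiffeo} applied to $V=\avoids$: since $F_0=\id$ and $\avoids$ is stationary, the first variation formula forces $\grad A^{\avoids}(0)=0$, and the strict concavity $\Diff^2 A^{\avoids}\le -c_0\,\id$ implies that $0$ is the unique maximizer of $A^{\avoids}$ on $\overline{B}^{n+1}$. Hence $A^{\avoids}(-v_\infty)\le A^{\avoids}(0)$, with equality only when $v_\infty=0$; combined with the reverse inequality this forces $v_\infty=0$ and $V_\infty=\avoids$, contradicting $\vard(V_k,\avoids)\ge\delta$. The main technical points are the joint $\vard$-continuity of the pushforward and the behavior of mass under weak convergence; both are classical for smooth families of diffeomorphisms on a fixed compact manifold, and---as already noted in the paper---the $G$-equivariance plays no essential role here, so the proof is a cosmetic adaptation of \cite{MarquesNeves2016}*{Lemma 4.4}.
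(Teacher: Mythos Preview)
Your argument is correct. The paper does not supply its own proof of this lemma but simply cites \cite{MarquesNeves2016}*{Lemma 4.4}, noting that the $G$-equivariance plays no role; your contradiction-plus-compactness argument (using the uniform mass bound via $V_k=(F_{-v_k})_\#(F_{v_k})_\# V_k$, the $\vard$-continuity of mass and of the pushforward, and the fact that stationarity of $\avoids$ together with the strict concavity from \cref{lem:IndexDiffeo} forces $m(\avoids)=0$) is exactly the standard route and matches the cited proof.
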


\begin{theorem}[Deformation theorem] \label{thm:Deformation}
Let $\{\so^j\}_{j\in\N}=\{\{\Sigma^j_t\}_{t\in I^n}\}_{j\in\N}$ be a minimizing sequence in the setting of \cref{thm:EquivMinMax}. Moreover, assume that
\begin{enumerate}[label={\normalfont(\roman*)}]
\item $\avoids^2$ is a finite union of $G$-equivariant free boundary minimal surfaces (possibly with multiplicity) with $\ind_G(\operatorname{spt}(\avoids))\ge n+1$;
\item $\area({\avoids}) = W_\Pi$;
\item $K$ is a compact set of varifolds such that $\avoids\not\in K$ and $\Sigma_t^j\not\in K$ for all $j\in\N$, $t\in I^n$.
\end{enumerate}
Then there exist $\varepsilon>0$ and another minimizing sequence $\{\boldsymbol{\Lambda}^j\}_{j\in\N}=\{\{\Lambda_t^j\}_{t\in I^n}\}_{j\in\N}\subset \Pi$ such that $\Lambda_t^j\cap(\overline{B}^{\vard}_\varepsilon(\avoids)\cup K) =\emptyset$ for all $j$ sufficiently large.
\end{theorem}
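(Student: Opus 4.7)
My plan is to adapt the strategy of Deformation Theorem A in \cite{MarquesNeves2016}*{Section 4} to our equivariant free boundary setting. The preparation will consist of applying \cref{lem:IndexDiffeo}, \cref{lem:PushAway}, and \cref{lem:FarIsFar} to $\avoids$ to obtain the smooth family $\{F_v\}_{v\in\overline{B}^{n+1}}\subset\operatorname{Diff}_G(M)$, the maximum-locator $m\colon\overline{B}^{\vard}_{2\delta}(\avoids)\to B^{n+1}_{c_0/\sqrt{10}}(0)$, the retraction flow $\Phi^V$ on $\overline{B}^{n+1}$, constants $c_0,\delta,\overline{\eta}>0$, and a time $T=T(\eta)$ for a fixed $\eta\in(0,1/4)$. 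By rescaling the $G$-equivariant vector fields generating $\{F_v\}$ to be uniformly small in $C^1$, and using that $K$ is compact with $\avoids\notin K$, I would arrange (possibly shrinking $\delta$) that $F_v(V)\notin K$ for every $V\in\overline{B}^{\vard}_{2\delta}(\avoids)$ and every $v\in\overline{B}^{n+1}$. I would then set $\varepsilon:=\overline{\eta}$.

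The heart of the argument will be the construction, for each large $j$, of a continuous map $v_j\colon I^n\to\overline{B}^{n+1}$ with $v_j|_{\partial I^n}\equiv 0$, and the definition of the new sweepout $\Lambda_t^j:=F_{v_j(t)}(\Sigma_t^j)$. On the \emph{close} region $\{t\colon\Sigma_t^j\in\overline{B}^{\vard}_{\delta}(\avoids)\}$ I would exploit the dimensional inequality $\dim(I^n)=n<n+1=\dim(\overline{B}^{n+1})$, which reflects the hypothesis $\ind_G(\operatorname{spt}(\avoids))\ge n+1$: the image $\{m(\Sigma_t^j)\colon t\in I^n\}$ is an at-most $n$-dimensional subset of $B^{n+1}_{c_0/\sqrt{10}}(0)$, leaving room in the $(n+1)$-ball to select, continuously in $t$, a point $v_j^\star(t)\in\overline{B}^{n+1}$ lying on the sublevel set $\{A^{\Sigma_t^j}\le\|\Sigma_t^j\|(M)\}$ and at distance at least $\eta$ from $m(\Sigma_t^j)$. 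I would then define $v_j(t):=\Phi^{\Sigma_t^j}(T,v_j^\star(t))$, which by \cref{lem:PushAway} satisfies $A^{\Sigma_t^j}(v_j(t))<\|\Sigma_t^j\|(M)-c_0/10$. On the \emph{far} region $\{t\colon\Sigma_t^j\notin B^{\vard}_{2\delta}(\avoids)\}$ I would set $v_j(t)=0$, so that $\Lambda_t^j=\Sigma_t^j$, which remains far from $\avoids$ by \cref{lem:FarIsFar}. In the transition annulus I would interpolate via the flow itself, setting $v_j(t):=\Phi^{\Sigma_t^j}(s(t)\,T,v_j^\star(t))$ with $s(t)\in[0,1]$ a continuous cutoff, so that the non-increasing property of $A^V$ along the flow gives $A^{\Sigma_t^j}(v_j(t))\le A^{\Sigma_t^j}(v_j^\star(t))\le\|\Sigma_t^j\|(M)$ throughout.

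To verify the conclusion, I would check that $\{\Lambda_t^j\}_{t\in I^n}$ lies in $\Pi$ (since $t\mapsto F_{v_j(t)}$ is a $G$-equivariant smooth isotopy equal to the identity on $\partial I^n$), that $\{\boldsymbol{\Lambda}^j\}_{j\in\N}$ is minimizing (since $\sup_t\area(\Lambda_t^j)\le\sup_t\|\Sigma_t^j\|(M)\to W_\Pi$), that each $\Lambda_t^j$ avoids $K$ (by the preparation step, whether $v_j(t)=0$ so $\Lambda_t^j=\Sigma_t^j\notin K$, or $\Sigma_t^j\in\overline{B}^{\vard}_{2\delta}(\avoids)$ so $F_{v_j(t)}(\Sigma_t^j)\notin K$), and that $\Lambda_t^j\notin\overline{B}^{\vard}_\varepsilon(\avoids)$. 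This last point, on the close and transition regions, uses the area drop $\area(\Lambda_t^j)\le\|\Sigma_t^j\|(M)-c_0/10\le\area(\avoids)-c_0/20$ for $j$ large, which combined with lower semicontinuity of mass forces a positive $\vard$-distance from $\avoids$; on the far region it is immediate from \cref{lem:FarIsFar}.

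The hard part will be the continuous selection of $v_j^\star(t)$ in the close region satisfying simultaneously the level-set constraint $A^{\Sigma_t^j}(v_j^\star(t))\le\|\Sigma_t^j\|(M)$ and the push-away constraint $|v_j^\star(t)-m(\Sigma_t^j)|\ge\eta$: this is precisely where the dimensional gap $n<n+1$ between the parameter space of the sweepout and the target ball of the auxiliary diffeomorphism family becomes crucial, and it is the analogue here of the key topological step in the proof of \cite{MarquesNeves2016}*{Deformation Theorem A}. A secondary technicality is that the sublevel set can become thin when $\Sigma_t^j$ is very close to $\avoids$, but the quantitative concavity bounds $-c_0^{-1}\id\le\Diff^2 A^V\le-c_0\id$ from \cref{lem:IndexDiffeo} give uniform control of its geometry and allow the selection to be carried out uniformly in $t$ and $j$.
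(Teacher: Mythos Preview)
Your overall strategy matches the paper's, but there are several concrete gaps in the execution.

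\textbf{Gluing at the transition/far boundary.} You set $v_j(t)=\Phi^{\Sigma_t^j}(s(t)T,v_j^\star(t))$ on the transition annulus and $v_j(t)=0$ on the far region. At the interface $s(t)\to 0$, this gives $v_j(t)\to v_j^\star(t)$, which need not be $0$; so $v_j$ is not continuous. The paper fixes this by choosing the \emph{starting point} $a_j(t)$ to lie in $B^{n+1}_{2^{-j}}(0)$ (very close to the origin) and then scaling it by the same cutoff $\rho$, so that both the starting point $\rho\cdot a_j(t)$ and the flow time $\rho\cdot T_j$ vanish continuously at the far boundary. This also makes the area-control step trivial: since $|a_j(t)|\le 2^{-j}$, one has $\area(F_{\rho\cdot a_j(t)}(\Sigma_t^j))\le\area(\Sigma_t^j)+o(1)$ uniformly in $t$.

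\textbf{Avoidance on the transition region.} You claim the area drop $\area(\Lambda_t^j)\le\area(\avoids)-c_0/20$ holds on the transition annulus. It does not: when $s(t)<1$ the flow has not run for time $T$, so \cref{lem:PushAway} gives no drop, only the non-increase $\area(\Lambda_t^j)\le\area(\Sigma_t^j)$. The paper therefore treats the transition region (taken to be $3\delta/2\le\vard(\avoids,\Sigma_t^j)\le 7\delta/4$) using \cref{lem:FarIsFar}: since $\Sigma_t^j\notin B^{\vard}_\delta(\avoids)$ there and the area has increased by at most $\overline{\eta}$, one gets $\vard(\Lambda_t^j,\avoids)\ge 2\overline{\eta}$.

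\textbf{Smoothness.} You assert that $t\mapsto F_{v_j(t)}$ is a smooth isotopy, but $v_j$ is only continuous, so $\{\Lambda_t^j\}$ need not lie in $\Pi$. The paper adds an explicit regularization step at the end: replace $v_j$ by a smooth $\tilde v_j$ with $|\tilde v_j-v_j|\le 2^{-j}$ and $\tilde v_j=0$ outside $U_j$.

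\textbf{The selection step.} You correctly identify that the dimensional gap $n<n+1$ is the crux, but leave the selection of $v_j^\star$ vague. The paper does this concretely and cheaply: the map $m_j\colon U_j\to B^{n+1}_{c_0/\sqrt{10}}(0)$ has $n$-dimensional domain and $(n+1)$-dimensional target, so by transversality one perturbs it to $\tilde m_j$ with $\tilde m_j(t)\neq 0$ everywhere, and sets $a_j(t)=m_j(t)-\tilde m_j(t)$. This single line replaces the ``hard part'' you describe, and the choice $|a_j(t)|\le 2^{-j}$ simultaneously solves the gluing and area-control issues above.
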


\begin{proof}
As aforementioned, the result is the analogue of Deformation Theorem A in \cite{MarquesNeves2016} and the idea of the proof is to exploit the fact that we have many negative directions for the second variation of the area along $\avoids$, hence it is possible to push the minimizing sequence $\{\so^j\}_{j\in\N}$ away from $\avoids$, keeping the fact that it is a minimizing sequence. Indeed, the sweepout is $n$-dimensional, while we have $n+1$ negative directions.

Since $\operatorname{spt}(\avoids)$ has $G$-equivariant index greater or equal than $n+1$, we can apply \cref{lem:IndexDiffeo} and obtain $0<c_0<1$, $\delta>0$ and the family $\{F_v\}_{v\in\overline{B}^{n+1}}\subset \operatorname{Diff}_G(M)$ given by the lemma. 
Moreover, up to modifying $\delta$ and $\{F_v\}_{v\in\overline{B}^{n+1}}$, we can assume that 
\begin{equation} \label{eq:AvoidK}
\vard(\avoids,F_v(\tilde\avoids)) \le \vard(\avoids, K)/2 \quad\text{for all $\tilde\avoids\in \overline{B}^{\vard}_{2\delta}(\avoids)$ and $v\in \overline{B}^{n+1}$}.
\end{equation}

Fixed $j\in\N$, define the open subset $U_j\subset I^n$ given by
\[
U_j\eqdef \{t\in I^n\st \vard(\avoids,\Sigma_t^j)<7\delta/4\}.
\]
Consider the continuous function $m_j\colon U_j\to B^{n+1}_{c_0/\sqrt{10}}(0)$ given by $m_j(t) = m(\Sigma_t^j)$, where the function $m$ is defined in \cref{lem:IndexDiffeo}. Since $\dim U_j = n < n+1= \dim (B^{n+1}_{c_0/\sqrt{10}}(0))$, by the transversality theorem given e.g. in \cite{Hirsch1994}*{Theorem 2.1}, there exists $\tilde m_j\colon U_j\to B^{n+1}_{c_0/\sqrt{10}}(0)$ such that $\tilde m_j(t)\not= 0$ and $\abs{\tilde m_j(t) - m_j(t)} < 2^{-j}$ for all $t\in U_j$. Hence, consider the function $a_j\colon U_j\to B^{n+1}_{2^{-j}}(0)$ given by $a_j(t) = m_j(t)-\tilde m_j(t)$ and note that $a_j(t)\not=m_j(t)$ for all $t\in U_j$. In particular, we can assume that there is $\eta_j>0$ such that $\abs{a_j(t) - m_j(t)}\ge \eta_j$ for all $t\in U_j$ (possibly taking $\delta$, and so $U_j$, smaller).

Now, for all $t\in U_j$, consider the one-parameter flow $\{\Phi^{t,j}(s,\cdot)\}_{s\ge 0} = \{\Phi^{\Sigma_t^j}(s,\cdot)\}_{s\ge 0}\subset \operatorname{Diff}(\overline{B}^{n+1})$ defined in \cref{lem:PushAway} and \[T_j = T(\eta_j,\delta, \avoids,\{F_v\}_{v\in\overline{B}^{n+1}}, c_0) \ge 0\] given by the lemma. Then, given a nonincreasing smooth function $\rho\colon [0,+\infty)\to [0,1]$ that is $1$ in $[0,3\delta/2]$ and $0$ in $[7\delta/4,+\infty)$, we define the continuous function
\[
v_j\colon I^n\to\overline{B}^{n+1}, \quad v_j(t) = \begin{cases}
\Phi^{t,j}(\rho(\vard(\avoids,\Sigma_t^j))T_j,\rho(\vard(\avoids,\Sigma_t^j))a_j(t)) & \text{for $t\in U_j$}\\
0 &\text{for $t\not\in U_j$},
\end{cases}
\]
and then set 
\[
\Lambda_t^j = \begin{cases}
                  F_{v_j(t)}(\Sigma_t^j)& \text{for $t\in U_j$}\\
\Sigma_t^j &\text{for $t\not\in U_j$}.
              \end{cases}
\]
Note that $\Lambda_t^j$ is $G$-equivariant since $\Sigma_t^j$ is $G$-equivariant and $F_{v_j(t)}\in \operatorname{Diff}_G(M)$.
However, a priori $\{\Lambda_t^j\}_{t\in I^n}$ is not contained in $\Pi$, since $(t,x)\mapsto F_{v_j(t)}(x)$ is not necessarily smooth but only continuous.
Anyway, let us first show that $\lim_{j\to+\infty}\sup_{t\in I^n} \area(\Lambda_t^j) \le W_\Pi$ and that $\Lambda_t^j \cap \overline{B}_\varepsilon^{\vard}(\avoids) =\emptyset$ for all $t\in I^n$, for $j$ sufficiently large, where $0<\varepsilon<\delta$ has to be chosen. Later, we will describe a regularization argument to get a sequence of sweepouts with the same properties of $\{\boldsymbol{\Lambda}^j\}_{j\in\N}$, but also contained in $\Pi$.

Observe that $\area(\Lambda_t^j) = \area(\Sigma_t^j)$ for $t\not\in U_j$ and, for $t\in U_j$, we have
\[
\area(\Lambda_t^j) = \area(F_{v_j(t)}(\Sigma_t^j)) \le \area(F_{\rho(\vard(\avoids,\Sigma_t^j)) a_j(t)}(\Sigma_t^j)).
\]
However $\abs{\rho(\vard(\avoids,\Sigma_t^j)) a_j(t)}\le 2^{-j}$, which implies that 
\[
\lim_{j\to+\infty} \max_{t\in I^n} \area(\Lambda_t^j) \le \lim_{j\to+\infty} \max_{t\in I^n} \area(\Sigma_t^j) = W_\Pi.
\]

Now let us prove that $\Lambda_t^j\cap\overline{B}_\varepsilon^{\vard}(\avoids) = \emptyset$ for all $t\in I^n$, for $j$ sufficiently large. 
Up to taking $\delta>0$ possibly smaller (note that \cref{lem:IndexDiffeo} still holds for $\delta$ smaller), we can assume that $\abs{\area(\tilde\avoids)-\area(\avoids)} \le c_0/20$ for all $\tilde\avoids\in \overline{B}_{2\delta}^{\vard}(\avoids)$.
Then, let us distinguish three cases:
\begin{itemize}
\item If $t\in I^n$ is such that $\vard(\avoids,\Sigma_t^j)\ge 7\delta/4$, then $\Lambda_t^j=\Sigma_t^j$ and therefore $\vard(\avoids,\Lambda_t^j) = \vard(\avoids,\Sigma_t^j) \ge 7\delta/4> \varepsilon$.

\item If $t\in I^n$ is such that $\vard(\avoids,\Sigma_t^j)\le 3\delta/2$, then we have $v_j(t) = \Phi^{t,j}(T_j,a_j(t))$ and therefore, by \cref{lem:PushAway}, it holds
\begin{align*}
\area(\Lambda_t^j) &= \area(F_{v_j(t)}(\Sigma_t^j)) = A^{\Sigma_t^j}(\Phi^{t,j}(T_j,a_j(t)))) < A^{\Sigma_t^j}(0) - \frac{c_0}{10} \\
&= \area(\Sigma_t^j) -\frac{c_0}{10} \le \area(\avoids) - \frac{c_0}{20},
\end{align*}
where the last inequality holds for $j$ sufficiently large.
Hence, it is possible to choose $\varepsilon>0$ possibly smaller (depending on $\avoids$ and $c_0$) such that this implies that $\vard(\avoids,\Lambda_t^j)>\varepsilon$ (indeed note that $c_0$ does not depend on $\varepsilon$).

\item If $t\in I^n$ is such that $3\delta/2\le \vard(\avoids,\Sigma_t^j)\le 7\delta/4$, then we apply \cref{lem:FarIsFar}.
Indeed, given $\overline{\eta} = \overline{\eta}(\delta, \avoids, \{F_v\}_{v\in \overline{B}^{n+1}})$ as in the lemma, for $j$ sufficiently large it holds that
\[
\area(\Lambda_t^j) = \area(F_{v_j(t)}(\Sigma_t^j)) \le \area(F_{\rho(\vard(\avoids,\Sigma_t^j) )a_j(t)}(\Sigma_t^j)) \le \area(\Sigma_t^j)+\overline{\eta},
\]
since $\abs{\rho(\vard(\avoids,\Sigma_t^j) )a_j(t)}\le 2^{-j}\to 0$. This implies that $\vard(\Lambda_t^j, \avoids)\ge 2\overline{\eta}$. Choosing $\varepsilon<2\overline{\eta}$, we then get that $\vard(\Lambda_t^j, \avoids)>\varepsilon$ for $j$ sufficiently large, as desired.
\end{itemize}

To conclude the proof, we need to address the regularity issue. For all $j\in\N$, let $\tilde v_j\colon I^n\to \overline{B}^{n+1}$ be a smooth function such that $\tilde v_j = 0$ on $I^n\setminus U_j$ and $\abs{\tilde v_j(t)-v_j(t)} \le 2^{-j}$ for all $t\in U_j$. Then, define
\[
\tilde\Lambda_t^j = \begin{cases}
                  F_{\tilde v_j(t)}(\Sigma_t^j)& \text{for $t\in U_j$}\\
\Sigma_t^j &\text{for $t\not\in U_j$}.
              \end{cases}
\]
Note that $\{\tilde\Lambda_t^j\}_{t\in I^n}\in \Pi$ for all $j\in\N$. Moreover, $\sup_{t\in I^n} \vard(\tilde\Lambda_t^j, \Lambda_t^j) \to 0$ as $j\to+\infty$, which implies that 
\[
\lim_{j\to+\infty} \sup_{t\in I^n}\area(\tilde\Lambda_t^j) = \lim_{j\to+\infty} \sup_{t\in I^n} \area(\Lambda_t^j) \le W_\Pi,
\]
and that $\tilde\Lambda_t^j\cap \overline{B}_\varepsilon^{\vard}(\avoids) = \emptyset$ for all $t\in I^n$, for $j$ sufficiently large (possibly taking $\varepsilon>0$ smaller).

Finally note that, thanks to \eqref{eq:AvoidK}, for all $t\in U_j$ it holds that $\tilde \Lambda_t^j = F_{\tilde v_j(t)}(\Sigma_t^j) \not \in K$, since $\Sigma_t^j\in \overline{B}^{\vard}_{2\delta}(\avoids)$ and $\tilde v_j(t)\in\overline{B}^{n+1}$. Moreover, for all $t\in I^n\setminus U_j$, we have $\tilde\Lambda_t^j = \Sigma_t^j\not\in K$. 
Hence $\{\tilde\Lambda_t^j\}_{t\in I^n}$ also avoids $K$ and thus satisfies the desired properties.
\end{proof}

\section{Proof of the main theorem}

We now have all the tools to prove \cref{thm:EquivMinMax}. Inspired by the proofs of \cite{MarquesNeves2016}*{Theorems 6.1 and 1.2}, the idea consists in repeatedly applying \cref{thm:Deformation} in order to obtain a minimizing sequence in $\Pi$ that stays away from the $G$-equivariant free boundary minimal surfaces with $G$-equivariant index greater than $n$.

\begin{proof}[Proof of \cref{thm:EquivMinMax}]
First of all, let us assume that the metric $\smetric$ on the ambient manifold $M$ is contained in $\mathcal{B}_G^\infty$, defined in \cref{def:BumpyMetrics}, i.e., it is bumpy.
Let us consider the set $\mathcal V^{n+1}$ of finite unions (possibly with multiplicity) of $G$-equivariant free boundary minimal surfaces in $M$ with area $W_\Pi$ and whose supports have $G$-equivariant index greater or equal than $n+1$. We want to prove that there exists a minimizing sequence $\{\so^j\}_{j\in\N}\subset \Pi$ such that $C(\{\so^j\}_{j\in\N})\cap \mathcal V^{n+1} = \emptyset$.
First note that, since $\smetric\in\mathcal{B}_G^\infty$, the set $\mathcal V^{n+1}$ is at most countable thanks to \cref{prop:CountableGequivSurfaces} (because $\mathcal{V}^{n+1}$ consists of finite unions with integer multiplicities of $G$-equivariant free boundary minimal surfaces). Therefore, we can write $\mathcal V^{n+1} = \{\avoids_1,\avoids_2,\ldots\}$. Now, the idea is to repeatedly apply \cref{thm:Deformation} in order to avoid all the elements in $\mathcal V^{n+1}$.

Let us consider a minimizing sequence $\{\so^j\}_{j\in\N}$ and apply \cref{thm:Deformation} with $\avoids=\avoids_1$. Then we get that there exist $\varepsilon_1>0$, $j_1\in\N$ and another minimizing sequence $\{\so^{1,j}\}_{j\in\N}\subset\Pi$ such that $\Sigma^{1,j}_t\cap \overline{B}^{\vard}_{\varepsilon_1}(\avoids_1)=\emptyset$ for all $j\ge j_1$ and $t\in I^n$. Moreover, we can assume that no $\avoids_k$ belongs to $\partial {B}^{\vard}_{\varepsilon_1}(\avoids_1)$.
Let us now consider $\avoids_2$: if it belongs to $\overline{B}^{\vard}_{\varepsilon_1}(\avoids_1)$, we choose $\varepsilon_2=\varepsilon_1-\vard(\avoids_1,\avoids_2)>0$ (here we use that $\avoids_2\not\in \partial {B}^{\vard}_{\varepsilon_1}(\avoids_1)$); otherwise we apply \cref{thm:Deformation} with $\avoids=\avoids_2$ and $K=\overline{B}^{\vard}_{\varepsilon_1}(\avoids_1)$. In both cases, we get $\varepsilon_2>0$, $j_2\in\N$ and another minimizing sequence $\{\so^{2,j}\}_{j\in\N}\subset\Pi$ such that $\Sigma^{2,j}_t\cap (\overline{B}^{\vard}_{\varepsilon_1}(\avoids_1)\cup \overline{B}^{\vard}_{\varepsilon_2}(\avoids_2))=\emptyset$ for all $j\ge j_2$ and $t\in I^n$. Moreover, we can assume again that no $\avoids_k$ belongs to $\partial {B}^{\vard}_{\varepsilon_2}(\avoids_2)$.

Then we proceed inductively for all $\avoids_k$'s and we have two possibilities:
\begin{itemize}
\item The process ends in finitely many steps. In this case there exist $m>0$, a minimizing sequence $\{\so^{m,j}\}_{j\in\N}\subset\Pi$, $\varepsilon_1,\ldots,\varepsilon_m>0$ and $j_m\in\N$ such that 
\[\Sigma^{m,j}_t\cap (\overline{B}^{\vard}_{\varepsilon_1}(\avoids_{1})\cup\ldots \cup \overline{B}^{\vard}_{\varepsilon_m}(\avoids_{m}))=\emptyset \]
for all $j\ge j_m$ and $t\in I^n$ and $\mathcal V^{n+1}\subset {B}^{\vard}_{\varepsilon_1}(\avoids_{1})\cup\ldots \cup {B}^{\vard}_{\varepsilon_m}(\avoids_{m})$.
\item The process continues indefinitely. In this case for all $m>0$ there exist a minimizing sequence $\{\so^{m,j}\}_{j\in\N}\subset\Pi$, $\varepsilon_m>0$ and $j_m\in\N$ such that $\Sigma^{m,j}_t\cap (\overline{B}^{\vard}_{\varepsilon_1}(\avoids_{1})\cup\ldots \cup \overline{B}^{\vard}_{\varepsilon_m}(\avoids_{m})) =\emptyset$ for all $j\ge j_m$ and $t\in I^n$ and no $\avoids_k$ belongs to $\partial {B}^{\vard}_{\varepsilon_1}(\avoids_{1})\cup\ldots \cup \partial {B}^{\vard}_{\varepsilon_m}(\avoids_{m})$.
\end{itemize}
In the first case we define $\boldsymbol{\Lambda}^i = \so^{m,i}$, while in the second case we set $\boldsymbol{\Lambda}^i= \so^{i,l_i}$ for all $i\in\N$, for some $l_i\ge j_i$ such that $\{\boldsymbol{\Lambda}^i\}_{i\in\N}\subset\Pi$ is a minimizing sequence and $C(\{\boldsymbol{\Lambda}^i\}_{i\in\N})\cap \mathcal V^{n+1}=\emptyset$. 
Hence, we can apply \cref{prop:ConvergenceToStationary}, \cref{lem:AlmostMinSeqInAdmissible,prop:AlmostMinSeq} to conclude the proof in the case of $\smetric\in\mathcal{B}_G^\infty$.

Now, consider the case of an arbitrary metric $\smetric$ and let $\{\smetric_k\}_{k\in\N}$ be a sequence of metrics in $\mathcal{B}_G^\infty$ converging smoothly to $\smetric$, which exists because of \cref{thm:BumpyIsGGeneric}. Thanks to the first part of the proof (in particular applying \cref{prop:ConvergenceToStationary,lem:AlmostMinSeqInAdmissible} to the minimizing sequence found in the first part of the proof with respect to $\gamma_k$), for every $k\in\N$ there exist a $G$-equivariant min-max sequence $\{\Lambda^{(k),j}_{t_j}\}_{j\in\N}\subset\Pi$ (i.e., $\area_{\gamma_k}(\Lambda^{(k),j}_{t_j}) \to W_{\Pi,\smetric_k}$, the width of $\Pi$ with respect to the metric $\smetric_k$) that is $G$-almost minimizing in every $L$-admissible family of $G$-equivariant annuli with $L=(3^n)^{3^n}$ and that converges to a finite union $\limitv_k$ of $G$-equivariant free boundary minimal (with respect to $\gamma_k$) surfaces (possibly with multiplicity). Moreover, it holds $\ind_G(\operatorname{spt} (\limitv_k))\le n$ and $\area_{\gamma_k}(\limitv_k) = W_{\Pi, \smetric_k}$.

Note that $W_{\Pi, \smetric_k}$ converges to the width $W_\Pi = W_{\Pi, \smetric}$ (the proof is the same as in the Almgren--Pitts setting, for which one can see \cite{IriMarNev18}*{Lemma 2.1}). Hence, since the varifolds $\limitv_k$ have uniformly bounded mass, up to subsequence $\limitv_k$ converges in the sense of varifolds to a varifold $\limitv$ with mass equal to $W_\Pi$. Moreover, taking a suitable diagonal subsequence of $\{\Lambda^{(k),j}_{t_j}\}_{j,k\in\N}$ we can obtain a min-max sequence $\{\Lambda^j\}_{j\in\N}$ for $\Pi$ with respect to $\smetric$, converging in the sense of varifolds to $\limitv$ and which is $G$-almost minimizing in every $L$-admissible family of $G$-equivariant annuli. Then, thanks to \cref{prop:AlmostMinSeq}, $\limitv$ is a disjoint union of $G$-equivariant free boundary minimal surfaces (possibly with multiplicity) and the genus bound in the statement holds.
One can look at \cref{fig:scheme} for a scheme of the argument.

\begin{figure}[htpb]
\centering
\usetikzlibrary{trees}
\tikzstyle{every node}=[anchor=west]
\tikzstyle{tit}=[shape=rectangle, rounded corners,
    draw, minimum width = 3cm]
\tikzstyle{sec}=[shape=rectangle, rounded corners]
\tikzstyle{optional}=[dashed,fill=gray!50]
\begin{tikzpicture}[scale=0.9]

\tikzstyle{mybox} = [draw, rectangle, rounded corners, inner sep=10pt, inner ysep=20pt]
\tikzstyle{fancytitle} =[fill=white, text=black, draw]

\begin{scope}[align=center, font=\small]
\node[sec] (d1) {$\vdots$};
\node[sec] (lk0) [below of = d1] {$\Lambda_{t_j}^{(k),j}$};
\node[sec] [below of = lk0] (lk1) {$\Lambda_{t_{j+1}}^{(k),j+1}$};
\node[sec] [below of = lk1] (lk2) {$\Lambda_{t_{j+2}}^{(k),j+2}$};
\node[sec] [below of = lk2] (d12) {$\vdots$};
\node[sec] [below of = d12, yshift=-.5cm] (x0) {$\Xi_k$};
\node[sec] [below of = x0, yshift=-.3cm] (s0) {$\operatorname{spt}(\Xi_k)$};
\draw[->] (d12) -- node {$\vard$} (x0);

\node[sec] [right of = d1, xshift=1cm] (d2) {$\vdots$};
\node[sec] (l10) [below of = d2] {$\Lambda_{t_j}^{(k+1),j}$};
\node[sec] [below of = l10] (l11) {$\Lambda_{t_{j+1}}^{(k+1),j+1}$};
\node[sec] [below of = l11] (l12) {$\Lambda_{t_{j+2}}^{(k+1),j+2}$};
\node[sec] [below of = l12] (d22) {$\vdots$};
\node[sec] [below of = d22, yshift=-.5cm] (x1) {$\Xi_{k+1}$};
\node[sec] [below of = x1, yshift=-.3cm] (s1) {$\operatorname{spt}(\Xi_{k+1})$};
\draw[->] (d22) -- node {$\vard$} (x1);

\node[sec] [right of = d2, xshift=1cm] (d3) {$\vdots$};
\node[sec] (l20) [below of = d3] {$\Lambda_{t_j}^{(k+2),j}$};
\node[sec] [below of = l20] (l21) {$\Lambda_{t_{j+1}}^{(k+2),j+1}$};
\node[sec] [below of = l21] (l22) {$\Lambda_{t_{j+2}}^{(k+2),j+2}$};
\node[sec] [below of = l22] (d32) {$\vdots$};
\node[sec] [below of = d32, yshift=-.5cm] (x2) {$\Xi_{k+2}$};
\node[sec] [below of = x2, yshift=-.3cm] (s2) {$\operatorname{spt}(\Xi_{k+2})$};
\draw[->] (d32) -- node {$\vard$} (x2);

\node[sec] [left of = x0, xshift = -.6cm] (dd1) {\ldots};
\node[sec] [below of = dd1, yshift=-.3cm] {\ldots};
\node[sec] [right of = x2, xshift = .6cm] (do) {\ldots};
\node[sec] [below of = do, yshift=-.3cm] (dos) {\ldots};

\node[sec]  [right of = do, xshift = 3cm] (lim) {$\Xi$};
\node[sec] [below of = lim, yshift=-.3cm] (limspt) {$\operatorname{spt}(\Xi)$};
\draw[->] ([xshift=3mm]do.east) -- node[above] {$\vard$} ([xshift=-3mm]lim.west);
\draw[->] ([xshift=3mm]dos.east) -- node[below] {smoothly in \\ $M\setminus(\mathcal{S}\cup\mathcal{Y})$\\
\tiny(multiplicity possible)} ([xshift=-3mm]limspt.west);

\node[sec] [right of = l21, xshift=.6cm, yshift=-1cm, rotate=9] (diag) {$\ddots$};

\begin{scope}[on background layer]
\draw[blue!50!white,rounded corners=15pt,opacity=1]
    let \p1=($(l10)!-32mm!(diag)$),
        \p2=($(diag)!-10mm!(l10)$),
        \p3=($(\p1)!6mm!90:(\p2)$),
        \p4=($(\p1)!6mm!-90:(\p2)$),
        \p5=($(\p2)!6mm!90:(\p1)$),
        \p6=($(\p2)!6mm!-90:(\p1)$)
    in
    (\p3) -- (\p4)-- (\p5) -- (\p6) -- cycle;
\end{scope}

\draw[->] ([xshift=1cm,yshift=-.3cm]diag.south) -- node[above] {$\vard$} ([xshift=-.3cm]lim.north);
\node[sec] [blue,above of = diag,xshift=3cm,yshift=5mm] (nn) {diagonal sequence,\\$G$-almost minimizing in\\ $L$-admissible families\\ of $G$-annuli};
\draw[->,blue] ([yshift=-2mm]nn.west) to [bend right=10] ([yshift=5mm,xshift=2mm]diag.north);

\draw [->, gray, line join=round, decorate, decoration={
    zigzag, segment length=4, amplitude=.9,post=lineto, post length=2pt
}]  ([yshift=-1mm]x0.south) -- ([yshift=1mm]s0.north);
\draw [->, gray, line join=round, decorate, decoration={
    zigzag, segment length=4, amplitude=.9,post=lineto, post length=2pt
}]  ([yshift=-1mm]x1.south) -- ([yshift=1mm]s1.north);
\draw [->, gray, line join=round, decorate, decoration={
    zigzag, segment length=4, amplitude=.9,post=lineto, post length=2pt
}]  ([yshift=-1mm]x2.south) -- ([yshift=1mm]s2.north);
\draw [->, gray, line join=round, decorate, decoration={
    zigzag, segment length=4, amplitude=.9,post=lineto, post length=2pt
}]  ([yshift=-1mm]lim.south) -- ([yshift=1mm]limspt.north);
\end{scope}

\end{tikzpicture}
\caption{Scheme of the convergence argument in the proof of \cref{thm:EquivMinMax}.}
\label{fig:scheme}
\end{figure}
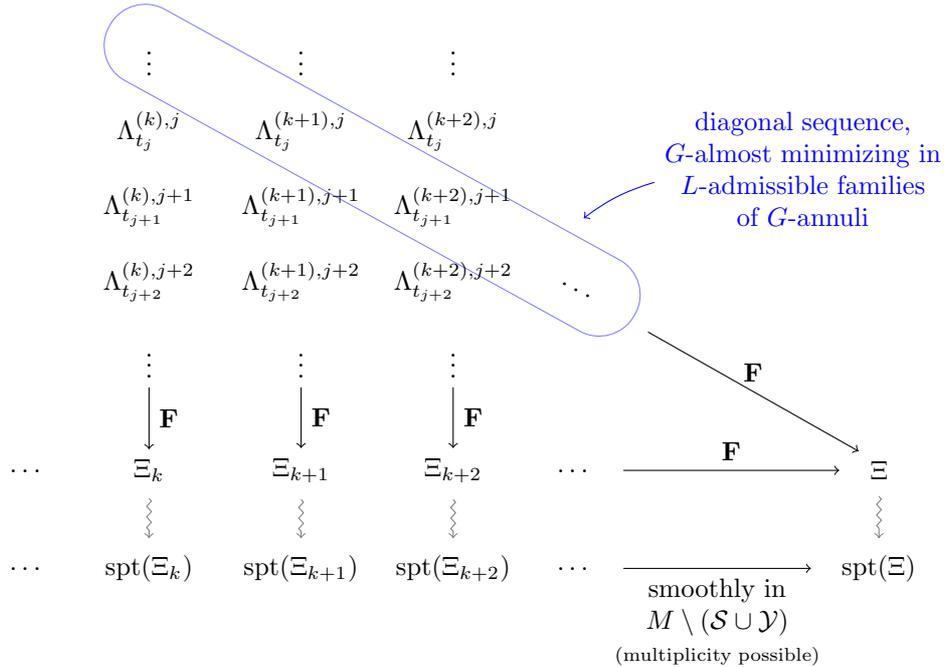

Now note that, thanks to \cref{thm:ConvBoundGIndex} (see also \cref{rem:AlsoForConvMetrics}), up to extracting a further subsequence, we can assume that $\operatorname{spt}(\limitv_k)$ converges smoothly (possibly with multiplicity) to a free boundary minimal surface away from the singular locus $\mathcal{S}$ and, possibly, from finitely many additional points $\mathcal{Y}$. This free boundary minimal surface coincides with the limit of $\operatorname{spt}(\limitv_k)$ in the sense of varifolds, which is $\operatorname{spt}(\limitv)$, in $M\setminus(\mathcal{S}\cup\mathcal{Y})$. In particular, we get that $\ind_G(\operatorname{spt}(\limitv))\le n$, which concludes the proof.
\end{proof}

\section{Equivariant index of some families of (free boundary) minimal surfaces} \label{sec:Applications}

In this section, we want to make use of \cref{thm:EquivMinMax} to compute the equivariant index of some families of minimal surface in $S^3$ and free boundary minimal surfaces in $B^3$.
Let us start by proving \cref{thm:Dg1Index}.

\begin{proof}[Proof of \cref{thm:Dg1Index}]
First note that, applying \cref{thm:EquivMinMax}, we can assume that the surface $M_g$ in \cite{CarlottoFranzSchulz2020}*{Theorem 1.1} has $\dih_{g+1}$-equivariant index less or equal than $1$. We want to prove that the $\dih_{g+1}$-equivariant index is exactly $1$. To do so, it is sufficient to construct a test function on which the associated quadratic form attains a negative value.

Recall that $M_g$ contains the horizontal axes of symmetry $\xi_1,\ldots,\xi_{g+1}$. Hence, $\operatorname{sgn}_{M_g}(\selem) = -1$ for all $\selem\in\dih_{g+1}$ given from a rotation of angle $\pi$ around any horizontal axis. Observe that these isometries generate all $\dih_{g+1}$, because the composition of the rotations of angle $\pi$ around $\xi_1$ and $\xi_2$ is equal to the rotation of angle $2\pi/(g+1)$ around $\xi_0$.
Hence, we can infer the sign of all the elements of the group.
In particular, we obtain that the function $u(x_1,x_2,x_3) = x_3$ on $M_g$ belongs to $C^\infty_G(M_g)$ for all $g\in \dih_{g+1}$. Moreover, one can compute that
\[
Q_{M_g}(u,u) = -\int_{M_g}\abs{A}^2u^2\de\Haus^2 <0
\]
(see \cite{Devyver2019}*{Lemma 6.1}). This proves that the $\dih_{g+1}$-equivariant index of $M_g$ is exactly $1$.
\end{proof}

\begin{remark}
Note that the function $x^\perp = \scal{x}{\nu}$ is in $C^\infty_G(M_g)$ and $Q(x^\perp, x^\perp) = 0$.
\end{remark}

\begin{remark}\label{rem:OtherFamilies}
Similarly to what we did for the surfaces $M_g$ constructed in \cite{CarlottoFranzSchulz2020}, we can apply \cref{thm:EquivMinMax} to any other surface obtained via an equivariant min-max procedure. To our knowledge, the known equivariant min-max constructions so far are:
\begin{itemize}
\item The minimal surfaces in $S^3$ of \cite{Ketover2016equiv}*{Sections 6.2, 6.3, 6.5, 6.6 and 6.7}.
\item The minimal surfaces in $S^3$ of \cite{KetoverMarquesNeves2020}*{Theorem 3.6}.
\item The free boundary minimal surfaces in $B^3$ of \cite{Ketover2016fb}*{Theorems 1.1, 1.2 and 1.3}.
\item The free boundary minimal surfaces in $B^3$ of \cite{CarlottoFranzSchulz2020}*{Theorem 1.1}, discussed above.
\end{itemize}

For all these surfaces we get that the equivariant index (with respect to the corresponding symmetry group) is less or equal than $1$. Then, one has to find a suitable equivariant test function to get the equality, on a case-by-case basis. However, in most of the cases above the constant function $1$ is such a test function. Indeed,  the constant $1$ is a negative direction for the second variation of the area functional for every minimal surface in $S^3$ and every free boundary minimal surface in $B^3$. Moreover, if the unit normal is equivariant (i.e., $h_*\nu=\nu$ for all $h$ in the symmetry group, where $\nu$ is a choice of unit normal), the constant function is also equivariant.

In other cases some more work is needed, as in the proof of \cref{thm:Dg1Index}. This last claim applies for example to the surfaces in $B^3$ constructed in \cite{Ketover2016fb}*{Theorem 1.1}, for which a good test function is again $u(x_1,x_2,x_3)=x_3$.
\end{remark}

\appendix

\section{Spectrum of elliptic operators with Robin boundary conditions}

In this appendix we consider an elliptic operator with Robin boundary conditions in presence of a symmetry group and we prove that it admits a discrete spectrum. The proof is very similar to the one in the case without equivariance (see for example the notes \cite{ArendtEtc2015}), but we report it here for completeness.

\begin{lemma} \label{lem:ImprovedTraceIneq}
Let $\Sigma^m$ be a compact Riemannian manifold with boundary $\partial \Sigma\not=\emptyset$. Then there exists a constant $C>0$ such that
\[
\norm{u}_{L^2(\partial\Sigma)}^2 \le C \norm{u}_{L^2(\Sigma)} \norm{u}_{H^1(\Sigma)}
\]
for all $u\in H^1(\Sigma)$.
\end{lemma}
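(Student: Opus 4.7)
The plan is to prove the inequality by a collar-neighborhood argument combined with a standard interpolation trick, reducing matters to a one-dimensional computation via the fundamental theorem of calculus.

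First I would reduce to $u \in C^\infty(\overline{\Sigma})$ by density (this class is dense in $H^1(\Sigma)$, and both sides of the inequality are continuous in the $H^1$-topology once we use the continuity of the trace operator $H^1(\Sigma) \to L^2(\partial\Sigma)$). Next I would fix a collar neighborhood $\mathcal{U} \simeq \partial\Sigma \times [0,\delta)$ of $\partial\Sigma$ via the normal exponential map, writing points as $(x',t)$ with $t = \operatorname{dist}(\cdot,\partial\Sigma)$. In these coordinates the volume form is $\sqrt{g(x',t)}\,dx'\,dt$ with $\sqrt{g}$ bounded above and below by positive constants (depending only on $\Sigma$), and $|\partial_t u| \le |\nabla u|$.

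The key one-dimensional identity is, for each $x'$ and each $t \in [0,\delta]$,
\[
u(x',0)^2 = u(x',t)^2 - 2\int_0^t u(x',s)\,\partial_s u(x',s)\de s.
\]
Integrating this over $\partial\Sigma$ with respect to $dx'$, then averaging in $t \in [0,\delta]$, and applying Cauchy--Schwarz to the double integral gives
\[
\norm{u}_{L^2(\partial\Sigma)}^2 \le \frac{C_1}{\delta}\norm{u}_{L^2(\mathcal{U})}^2 + 2\int_{\mathcal{U}} |u|\,|\nabla u|\de \Haus^m \le \frac{C_1}{\delta}\norm{u}_{L^2(\Sigma)}^2 + 2\norm{u}_{L^2(\Sigma)}\norm{\nabla u}_{L^2(\Sigma)},
\]
where the constants absorb the comparability of $\sqrt{g}$ with the constant density on $\partial\Sigma$. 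Since $\norm{u}_{L^2(\Sigma)} \le \norm{u}_{H^1(\Sigma)}$ and $\norm{\nabla u}_{L^2(\Sigma)} \le \norm{u}_{H^1(\Sigma)}$, both summands are bounded by $C\norm{u}_{L^2(\Sigma)}\norm{u}_{H^1(\Sigma)}$, yielding the claim.

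If $\partial\Sigma$ does not admit a single global collar, I would instead cover $\partial\Sigma$ by finitely many coordinate patches over which collars exist, use a subordinate partition of unity $\{\chi_i\}$ on $\partial\Sigma$ extended to $\Sigma$, apply the above estimate to each $\chi_i u$, and sum. The main (very mild) obstacle is simply the bookkeeping with the collar metric to ensure the constants are uniform; once $\sqrt{g}$ is controlled on a fixed collar of width $\delta$, the argument is entirely elementary and the interpolation $\norm{u}_{L^2}^2 \le \norm{u}_{L^2}\norm{u}_{H^1}$ does the rest.
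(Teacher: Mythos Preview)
Your proof is correct and follows essentially the same approach as the paper: both use the fundamental theorem of calculus in the normal direction to write $u(x',0)^2$ in terms of $u\,\partial_t u$ and then apply Cauchy--Schwarz to obtain the multiplicative bound. The paper simply defers the one-dimensional computation to Brezis's half-space argument and then invokes a partition of unity, whereas you carry it out directly in a collar with an averaging in $t$ to absorb the interior endpoint; the content is the same.
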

\begin{proof}
The result is a slight variation of the standard trace inequality $\norm{u}_{L^2(\partial\Sigma)} \le C\norm{u}_{H^1(\Sigma)}$. Indeed, one can look at the proof of \cite{Brezis2011}*{Lemma 9.9} where $\Sigma=\R^m_+=\{(x_1,\ldots,x_m)\in\R^m\st x_m\ge 0\}$ and note that, integrating the second last line and applying Cauchy--Schwarz inequality, we get that there exists a constant $C>0$ such that
\[
\norm{u}^2_{L^2(\partial\Sigma)}  \le C \norm{u}_{L^2(\Sigma)} \norm{u}_{H^1(\Sigma)}
\]
for all $u\in C^1_c(\R^m)$. Then the proof of the lemma follows from a standard partition argument.
\end{proof}

\begin{lemma}\label{lem:CompactResolvent}
Let $V, H$ be Hilbert spaces such that there exists a compact (continuous) embedding $j\colon V\xhookrightarrow{d} H$ with dense image. Let $a\colon V\times V\to\R$ be a bounded symmetric $H$-elliptic form, i.e., assume that there exist $\omega\in\R$ and $c>0$ such that
\[
a(u,u) + \omega \norm{j(u)}^2_H \ge c \norm{u}^2_{V}
\]
for all $u\in V$. 
Moreover, let $A\colon D(A)\subset V\to H$ be the operator associated with the symmetric form $a$, i.e., given $x\in V$ and $y\in H$ we have $x\in D(A)$ and $Ax=y$ if and only if $a(x,u) = (y, j(u))_H$ for all $u\in V$. 
Then the operator $A+\omega \id \colon D(A)\subset V\to H$ is invertible with bounded compact inverse $(A+\omega\id)^{-1}\colon H\to D(A)\hookrightarrow H$.
\end{lemma}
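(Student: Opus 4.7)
The plan is to reduce the statement to the classical Lax--Milgram / Riesz representation argument applied to the shifted form, and then to extract compactness of the resolvent from the compactness of the embedding $j$. Concretely, I would introduce the shifted bilinear form
\[
\tilde a\colon V\times V\to\R,\qquad \tilde a(u,v) \eqdef a(u,v) + \omega\, (j(u), j(v))_H.
\]
By the assumed $H$-ellipticity, $\tilde a(u,u) \ge c \norm{u}_V^2$, and $\tilde a$ is bounded and symmetric since $a$ is bounded and symmetric and $j$ is continuous. Hence $\tilde a$ is an equivalent inner product on $V$.

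Given $f\in H$, the linear functional $v\mapsto (f, j(v))_H$ is continuous on $V$ because $j$ is continuous, so by the Riesz representation theorem applied to the Hilbert space $(V,\tilde a)$ there exists a unique $u\in V$ with
\[
\tilde a(u,v) = (f, j(v))_H \quad \text{for all $v\in V$},
\]
which, unwinding the definitions, means $a(u,v) = (f - \omega j(u), j(v))_H$ for all $v\in V$. By definition of the operator $A$, this is equivalent to $u\in D(A)$ and $Au = f - \omega j(u)$, i.e.\ $(A+\omega\id)u = f$. Uniqueness of $u$ gives injectivity of $A+\omega\id$, and existence for every $f\in H$ gives surjectivity, so $A+\omega\id\colon D(A)\to H$ is a bijection and its inverse $T\colon H\to D(A)\subset V$ is well-defined.

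Next I would check boundedness of $T\colon H\to V$. Taking $v=u=Tf$ in the defining identity yields
\[
c\norm{Tf}_V^2 \le \tilde a(Tf,Tf) = (f, j(Tf))_H \le \norm{f}_H \norm{j}\, \norm{Tf}_V,
\]
so $\norm{Tf}_V \le c^{-1}\norm{j}\,\norm{f}_H$. Finally, writing $(A+\omega\id)^{-1}\colon H\to H$ as the composition $j\circ T$ of the bounded map $T\colon H\to V$ with the compact embedding $j\colon V\hookrightarrow H$ shows that the resolvent is compact (and bounded) as a map $H\to H$.

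I do not anticipate a genuine obstacle: the argument is textbook. The only place where one has to be careful is checking that the element produced by Riesz representation lies in the abstract domain $D(A)$ with the claimed value of $(A+\omega\id)u$; this is just a matter of rewriting $\tilde a(u,v) = (f, j(v))_H$ in the form $a(u,v) = (f-\omega j(u), j(v))_H$ and invoking the definition of $A$. Density of $j(V)$ in $H$ is not needed for existence, but it guarantees that the uniqueness statement for $A$ makes sense, so no separate issue arises.
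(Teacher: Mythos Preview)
Your argument is correct and is essentially the same as the paper's: the paper introduces the shifted form $b=\tilde a$, applies Lax--Milgram to obtain the isomorphism $\mathcal B\colon V\to V^*$, and then identifies $(A+\omega\id)^{-1}$ with the composition $j\circ\mathcal B^{-1}\circ k$ where $k(y)=(y,j(\cdot))_H$, which is exactly your $j\circ T$. The only cosmetic difference is that you invoke Riesz representation directly (exploiting the symmetry of $a$) whereas the paper cites Lax--Milgram, and you spell out the boundedness estimate for $T$ explicitly.
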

\begin{proof}
We briefly sketch the proof.
Let us consider the operator $b\colon V\times V\to \R$ given by $b(u,v) \eqdef a(u,v) + \omega (j(u),j(v))_H$.
Since $b$ is bounded and coercive, we can apply Lax--Milgram theorem (cf. \cite{Brezis2011}*{Corollary 5.8}) and obtain that $\mathcal B\colon V\to V^*$ defined as $\mathcal Bu(v) \eqdef b(u,v)$ is an isomorphism. Finally one can prove that the operator $j\circ \mathcal B^{-1}\circ k$, where $k\colon H\to V^*$ is given by $k(y) = (y, j(\cdot))_H$, coincides with $(A+\omega\id)^{-1}$.
\end{proof}

\begin{theorem} \label{thm:DiscreteSpectrum}
Let $\Sigma^m$ be a compact Riemannian manifold with boundary $\partial\Sigma\not=\emptyset$, $G$ be a finite group of isometries of $\Sigma$ and $\operatorname{sgn}_\Sigma\colon G\to \{-1,1\}$ be a multiplicative function. In particular we can define $C^\infty_G(\Sigma), L^2_G(\Sigma), H^1_G(\Sigma)$ as in \cref{def:EquivFuncSpaces}. Let $\alpha\colon\Sigma\to\R$ and $\beta\colon\partial \Sigma\to\R$ be smooth $G$-equivariant functions, i.e., $\alpha\circ \selem = \alpha$ and $\beta\circ \selem = \beta$ for all $\selem\in G$.
Then there exists an orthonormal basis $(\varphi_k)_{k\ge 1}\subset C^\infty_G({\Sigma})$ of $L^2_G(\Sigma)$ and a nondecreasing sequence $(\lambda_k)_{k\ge 1}\subset \R$ diverging to $+\infty$ such that
\[
\begin{cases}
-\Delta \varphi_k - \alpha\varphi_k = \lambda_k \varphi_k & \text{in $\Sigma$}\\
\partial_\eta \varphi_k + \beta\varphi_k = 0 & \text{in $\partial\Sigma$}.
\end{cases}
\]
\end{theorem}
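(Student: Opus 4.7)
The plan is to apply \cref{lem:CompactResolvent} with $V=H^1_G(\Sigma)$, $H=L^2_G(\Sigma)$, and the symmetric bilinear form
\[
a(u,v) \eqdef \int_\Sigma (\scal{\grad u}{\grad v} - \alpha uv) \de \Haus^m + \int_{\partial \Sigma} \beta uv \de \Haus^{m-1}
\]
defined on $H^1_G(\Sigma)\times H^1_G(\Sigma)$. The inclusion $H^1_G(\Sigma)\hookrightarrow L^2_G(\Sigma)$ has dense image because $C^\infty_G(\Sigma)\subset H^1_G(\Sigma)$ is dense in $L^2_G(\Sigma)$, and it is compact since $L^2_G(\Sigma)$ is a closed subspace of $L^2(\Sigma)$ (equivariance is preserved under $L^2$-convergence) and the standard Rellich--Kondrachov embedding $H^1(\Sigma)\hookrightarrow L^2(\Sigma)$ is compact. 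Boundedness of $a$ is immediate from Cauchy--Schwarz in the interior and the usual trace inequality on the boundary. For the $H$-ellipticity, \cref{lem:ImprovedTraceIneq} combined with Young's inequality gives
\[
\abs*{\int_{\partial\Sigma} \beta u^2 \de \Haus^{m-1}} \le \norm{\beta}_\infty C \norm{u}_{L^2(\Sigma)}\norm{u}_{H^1(\Sigma)} \le \tfrac{1}{2}\norm{u}_{H^1(\Sigma)}^2 + C'\norm{u}_{L^2(\Sigma)}^2,
\]
so that for $\omega$ sufficiently large one obtains $a(u,u) + \omega\norm{u}_{L^2}^2 \ge \tfrac{1}{4}\norm{u}_{H^1}^2$.

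Then \cref{lem:CompactResolvent} produces a self-adjoint operator $A\colon D(A)\subset H^1_G(\Sigma)\to L^2_G(\Sigma)$ with compact self-adjoint resolvent $(A+\omega\id)^{-1}$ on $L^2_G(\Sigma)$. By the spectral theorem for compact self-adjoint operators there exists an $L^2_G$-orthonormal basis $(\varphi_k)_{k\ge 1}\subset D(A)\subset H^1_G(\Sigma)$ of eigenfunctions of $A$, with eigenvalues $\lambda_k\to +\infty$, satisfying the equivariant weak formulation
\[
a(\varphi_k, v) = \lambda_k(\varphi_k, v)_{L^2}\quad \text{for every } v\in H^1_G(\Sigma).
\]

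It remains to upgrade this equivariant weak formulation to a classical smooth solution of the Robin problem. The key step is a symmetrization argument: for any test function $\psi\in H^1(\Sigma)$, the average
\[
\bar\psi\eqdef \frac{1}{\abs{G}}\sum_{h\in G}\operatorname{sgn}_\Sigma(h)\,\psi\circ h
\]
lies in $H^1_G(\Sigma)$. Using that every $h\in G$ is an isometry and that $\alpha,\beta$ are $G$-invariant while $\varphi_k$ is $G$-equivariant with sign $\operatorname{sgn}_\Sigma$, a direct change-of-variables computation shows that $a(\varphi_k, \psi\circ h) = \operatorname{sgn}_\Sigma(h)\,a(\varphi_k,\psi)$ and $(\varphi_k, \psi\circ h)_{L^2} = \operatorname{sgn}_\Sigma(h)(\varphi_k,\psi)_{L^2}$, so $a(\varphi_k, \bar\psi) = a(\varphi_k,\psi)$ and similarly for the $L^2$-pairing. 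Hence $\varphi_k$ is a weak solution of the Robin boundary value problem in the ordinary (non-equivariant) sense, and standard elliptic regularity for Robin problems yields $\varphi_k\in C^\infty(\Sigma)$. Equivariance then upgrades from almost everywhere (inherited from $\varphi_k\in H^1_G$) to everywhere by continuity, so $\varphi_k\in C^\infty_G(\Sigma)$. I expect this symmetrization step to be the main obstacle, since without it the operator $A$ is tested only against equivariant functions, to which classical elliptic regularity does not directly apply.
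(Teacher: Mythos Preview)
Your proof is correct and follows essentially the same route as the paper: the same bilinear form $a$ on $H^1_G(\Sigma)$, the same verification of boundedness and $H$-ellipticity via \cref{lem:ImprovedTraceIneq}, the same appeal to \cref{lem:CompactResolvent} and the spectral theorem for compact self-adjoint operators. The one point where you go beyond the paper is the symmetrization step: the paper simply asserts that ``$\varphi_k\in C^\infty_G(\Sigma)$ follows from standard regularity theory'' after testing only against $v\in C^\infty_G(\Sigma)$, whereas you explicitly average an arbitrary test function over $G$ to show that the weak formulation in fact holds against all of $H^1(\Sigma)$, which is what elliptic regularity for the Robin problem actually requires. This is a genuine clarification rather than a different approach.
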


\begin{proof}
Let us consider the quadratic form $a\colon H^1_G(\Sigma)\times H^1_G(\Sigma)\to \R$ defined as 
\[
a(u,v) \eqdef \int_\Sigma (\grad u \cdot \grad v - \alpha u v ) \de \Haus^m + \int_{\partial \Sigma} \beta uv \de \Haus^{m-1}.
\]
Note that $a$ is continuous, because 
\begin{align*}
\abs{a(u,v)} &\le \norm{\grad u}_{L^2(\Sigma)} \norm{\grad v}_{L^2(\Sigma)} + \norm{\alpha}_{L^\infty(\Sigma)}\norm{u}_{L^2(\Sigma)} \norm{v}_{L^2(\Sigma)} + \norm{\beta}_{L^\infty(\partial \Sigma)}\norm{\tr u}_{L^2(\partial \Sigma)} \norm{\tr v}_{L^2(\partial \Sigma)},
\end{align*}
and the trace $\tr\colon H^1_G(\Sigma)\to L^2_G(\partial\Sigma)$ is continuous.
Moreover, $a$ is $L^2_G(\Sigma)$-elliptic, i.e.,
\[
a(u,u) +\omega \norm{u}_{L^2(\Sigma)}^2 \ge c \norm{u}^2_{H^1(\Sigma)}
\]
for all $u\in H^1_G(\Sigma)$, for some $c>0$ and $\omega\in\R$. Here we used that \cref{lem:ImprovedTraceIneq} implies
\[
\norm{\beta}_{L^\infty} \int_{\partial\Sigma} \abs{u}^2\de\Haus^{m-1} \le c_1 \norm{u}_{L^2(\Sigma)} \norm{u}_{H^1(\Sigma)} \le \frac 12\norm{u}_{H^1(\Sigma)}^2 + \frac {c_1}{2} \norm{u}_{L^2(\Sigma)}^2
\]
for some $c_1>0$. 

Hence, since the embedding $j\colon H^1_G(\Sigma)\to L^2_G(\Sigma)$ is compact, we can apply \cref{lem:CompactResolvent} with $H=L^2_G(\Sigma)$ and $V=H^1_G(\Sigma)$, and obtain that $A+\omega\id\colon D(A)\subset H^1_G(\Sigma)\to L^2_G(\Sigma)$ is invertible with bounded compact inverse $(A+\omega\id)^{-1}\colon L^2_G(\Sigma)\to D(A)\subset L^2_G(\Sigma)$, where $A\colon D(A)\subset L^2_G(\Sigma)\to L^2_G(\Sigma)$ is the operator associated with the symmetric form $a$, as in the statement of the lemma.

Now we want to prove that $A=-\Delta-\alpha$ and $D(A)=D_R$, where
\[
D_R\eqdef  \left\{u\in H^1_G(\Sigma)\st \Delta u \in L^2_G(\Sigma),\ \int_\Sigma (\Delta u) v + \grad u\cdot \grad v\de \Haus^m =- \int_{\partial \Sigma} \beta u v \de\Haus^{m-1}\ \forall v\in H^1_G(\Sigma) \right\}.
\]
Let $u\in D(A)$ and $f\in L^2_G(\Sigma)$ be such that $Au = f$, then
\[
\int_{\Sigma} (\grad u\cdot \grad v -\alpha uv) \de \Haus^m + \int_{\partial \Sigma} \beta uv \de \Haus^{m-1} = \int_\Sigma fv\de \Haus^m
\]
for all $v\in H^1_G(\Sigma)$. In particular, if we consider $v\in C^\infty_G(\Sigma)$ with compact support in $\operatorname{int}(\Sigma)$, we get that $\Delta u\in L^2_G(\Sigma)$ and $-\Delta u - \alpha u = f$. Exploiting this equation we obtain
\[
\int_\Sigma (\Delta u) v + \grad u\cdot \grad v\de \Haus^m =- \int_{\partial \Sigma} \beta u v \de\Haus^{m-1}
\]
for all $v\in H^1_G(\Sigma)$. This proves that $u\in D_R$ and $Au=-\Delta u-\alpha u$.
Conversely, given $u\in D_R$, we can easily show that $u\in D(A)$.

Now observe that the bounded compact operator $(A+\omega\id)^{-1}\colon L^2_G(\Sigma)\to L^2_G(\Sigma)$ is positive definite.
Therefore, by the spectral theorem, there exists an orthonormal basis $(\varphi_k)_{k\in\N}$ of $L^2_G(\Sigma)$ of eigenvectors of $(A+\omega\id)^{-1}$ with corresponding eigenvalues $(\mu_k)_{k\in\N}$ such that $\mu_0\ge \mu_1\ge\mu_2\ge \ldots\to0$. Then note that $\varphi_k\in D_R$ and it is an eigenvector of $A$ with corresponding eigenvalue $\lambda_k\eqdef 1/\mu_k-\omega$ for all $k\in\N$.
Finally, the fact that $\varphi_k\in C_G^\infty(\Sigma)$ follows from standard regularity theory and, from the fact that $\varphi_k\in D_R$, we obtain also that $\partial_\eta\varphi_k +\beta\varphi_k=0$ in $\partial\Sigma$.
\end{proof}

\section{Bumpyness is $G$-generic} \label{sec:BumpyIsGGeneric}

In this section, let $M^{m+1}$ be a smooth, compact, connected manifold with boundary and let $G$ be a finite group of diffeomorphisms of $M$.
Moreover, let $q$ be a positive integer $\ge 3$ or $q=\infty$.

\begin{definition} [{cf. \cite{AmbCarSha18Compactness}*{Theorem 9}}] \label{def:BumpyMetrics}
Denote by $\Gamma^q_G$ be the set of $G$-equivariant $C^q$ metrics on $M$ endowed with the $C^q$ topology. Moreover, let $\mathcal B^q_G\subset \Gamma^q_G$ be the subset of metrics $\smetric\in \Gamma^q_G$ such that no compact, smooth, $G$-equivariant manifolds with boundary that are $C^q$ properly embedded as free boundary minimal hypersurfaces in $(M,\smetric)$, and no finite covers of any such hypersurface, admit a nontrivial Jacobi field.
\end{definition}
\begin{remark}
Note that we \emph{do not} require the Jacobi field in the previous definition to be $G$-equivariant.
\end{remark}

\begin{proposition} \label{prop:CountableGequivSurfaces}
Let $\gamma\in \mathcal{B}_G^\infty$ be such that $(M,\smetric)$ satisfies property \hyperref[HypP]{$(\mathfrak{P})$}. Then $(M,\smetric)$ contains countably many $G$-equivariant free boundary minimal hypersurfaces.
\end{proposition}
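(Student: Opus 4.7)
The plan is to stratify the set of $G$-equivariant free boundary minimal hypersurfaces by area and Morse index, and then use the Ambrozio--Carlotto--Sharp compactness theorem \cite{AmbCarSha18Compactness} (whose applicability is guaranteed by property \hyperref[HypP]{$(\mathfrak{P})$}) together with the bumpiness assumption to show that each stratum is finite. Specifically, for $A, I \in \N$, let $\mathcal{F}_{A,I}$ denote the set of $G$-equivariant free boundary minimal hypersurfaces $\Sigma \subset M$ with $\Haus^m(\Sigma) \le A$ and (nonequivariant) Morse index $\ind(\Sigma) \le I$. Since $M$ is compact, any such $\Sigma$ has finite area and finite index, so the desired collection is exhausted by $\bigcup_{A,I \in \N} \mathcal{F}_{A,I}$, which is a countable union. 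It therefore suffices to prove that each $\mathcal{F}_{A,I}$ is finite.

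Suppose by contradiction that some $\mathcal{F}_{A,I}$ contains an infinite sequence of pairwise distinct hypersurfaces $\{\Sigma_k\}_{k\in\N}$. Applying the compactness theorem of \cite{AmbCarSha18Compactness}, after extracting a subsequence we have that $\Sigma_k$ converges locally graphically and smoothly, with some finite multiplicity $m \ge 1$, to a properly embedded free boundary minimal hypersurface $\Sigma_\infty \subset M$. Since each $\Sigma_k$ is $G$-equivariant and the isometries in $G$ act continuously on the space of varifolds, the limit $\Sigma_\infty$ is also $G$-equivariant.

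The concluding step is to produce a nontrivial Jacobi field, which is the main technical point. When $m = 1$, for $k$ large enough $\Sigma_k$ can be written as a graph over $\Sigma_\infty$ of a function $u_k$, and $u_k \not\equiv 0$ since the hypersurfaces are pairwise distinct; normalising $u_k/\norm{u_k}_{C^2}$ and passing to the limit via standard elliptic estimates gives a nontrivial solution of the linearised free boundary minimal surface equation on $\Sigma_\infty$, namely a Jacobi field with the appropriate Robin-type boundary condition. When $m \ge 2$, the same construction is performed on the $m$-fold cover of $\Sigma_\infty$: locally the sheets of $\Sigma_k$ are graphs over $\Sigma_\infty$, and differences between pairs of sheets, suitably renormalised, yield a nontrivial Jacobi field on a finite cover. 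Either way we contradict $\smetric \in \mathcal{B}_G^\infty$, since by \cref{def:BumpyMetrics} no compact $G$-equivariant free boundary minimal hypersurface, nor any finite cover thereof, admits a nontrivial Jacobi field. Hence $\mathcal{F}_{A,I}$ is finite for every $A, I \in \N$, and the proposition follows.

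The hard part is really just the construction of the Jacobi field in the multiplicity $m \ge 2$ case; this is the reason the definition of $\mathcal{B}_G^\infty$ explicitly forbids Jacobi fields on finite covers, and the argument follows the template of White's proof of the bumpy metrics theorem adapted to the free boundary setting (where, crucially, one does \emph{not} require the Jacobi field to be $G$-equivariant, which is why the bumpiness hypothesis is strong enough).
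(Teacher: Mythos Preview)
Your proof is correct and is exactly the argument the paper has in mind: the paper's one-line proof cites Theorem~5 in \cite{AmbCarSha18Compactness} and says the deduction is ``similarly to Corollary~8 therein,'' which is precisely your stratification by area and index followed by compactness plus the Jacobi-field contradiction. One small inaccuracy: in the multiplicity $m\ge 2$ case the Jacobi field built from differences of sheets lives on $\Sigma_\infty$ itself when $\Sigma_\infty$ is two-sided, and on its \emph{double} cover when $\Sigma_\infty$ is one-sided---there is no ``$m$-fold cover'' involved---but since $\mathcal{B}_G^\infty$ forbids Jacobi fields on any finite cover, your contradiction still goes through.
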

\begin{proof}
This follows from Theorem 5 in \cite{AmbCarSha18Compactness}, similarly to Corollary 8 therein.
\end{proof}

\begin{theorem} \label{thm:BumpyIsGGeneric}
The subset $\mathcal B^q_G$ defined in \cref{def:BumpyMetrics} is comeagre in $\Gamma^q_G$.
\end{theorem}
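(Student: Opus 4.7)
The plan is to adapt the classical bumpy metric theorem to the $G$-equivariant free boundary setting, following the overall strategy of \cite{AmbCarSha18Compactness}*{Theorem 9}.

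I would first decompose $\mathcal{B}^q_G$ as a countable intersection of open dense subsets. For each pair of positive integers $(a, \ell)$, let $\mathcal{B}^q_{G,a,\ell} \subset \Gamma^q_G$ be the set of metrics $\gamma$ such that no $G$-equivariant compact properly embedded free boundary minimal hypersurface in $(M,\gamma)$ of area at most $a$, nor any $\ell$-fold cover thereof, carries a nontrivial Jacobi field. Then $\mathcal{B}^q_G = \bigcap_{a,\ell \in \N} \mathcal{B}^q_{G,a,\ell}$, and since $\Gamma^q_G$ is a Baire space it suffices to show each $\mathcal{B}^q_{G,a,\ell}$ is open and dense.

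Openness would follow from a $G$-equivariant version of the Ambrozio--Carlotto--Sharp compactness theorem (invoked already in \cref{sec:ConvBoundGIndex}): a sequence $\gamma_n \to \gamma$ in $\Gamma^q_G$ with counterexamples $\Sigma_n \subset (M,\gamma_n)$ of area $\le a$ carrying unit-norm Jacobi fields $\phi_n$ would subconverge smoothly (possibly with multiplicity) away from a finite set to a $G$-equivariant free boundary minimal hypersurface $\Sigma \subset (M,\gamma)$, and standard elliptic estimates together with a removable-singularity argument promote the $\phi_n$ to a nonzero limit Jacobi field on $\Sigma$. For density, given $\gamma_0 \in \Gamma^q_G$ and a neighbourhood $\mathcal{U}$ of $\gamma_0$, the same compactness localizes the problem: up to shrinking $\mathcal{U}$, only finitely many $G$-equivariant free boundary minimal hypersurfaces of area $\le a$ in $(M,\gamma_0)$ can carry Jacobi fields, and their Jacobi kernels can be removed one at a time by an inductive perturbation. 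The heart of the matter is thus the following equivariant transversality lemma: given a $G$-equivariant free boundary minimal hypersurface $\Sigma \subset (M,\gamma)$ with $\ker L_\Sigma \neq 0$, there exists a $G$-equivariant $C^q$ symmetric $2$-tensor $h$ on $M$ such that along the path $\gamma + th$ the corresponding $G$-equivariant free boundary minimal hypersurface $\Sigma_t$ has strictly smaller Jacobi kernel for $t \neq 0$ small.

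The main obstacle is that $G$-equivariant metric perturbations commute with the $G$-action on $\ker L_\Sigma$ and preserve the $G$-isotypic decomposition $\ker L_\Sigma = \bigoplus_\rho (\ker L_\Sigma)_\rho$, so a priori they can only kill Jacobi fields in the trivial isotypic component $(\ker L_\Sigma)^G$, whereas \cref{def:BumpyMetrics} forbids \emph{all} Jacobi fields, including non-equivariant ones. I would overcome this via a representation-theoretic observation in the spirit of White's classical transversality argument: for an eigenvector $\phi \in (\ker L_\Sigma)_\rho$, the first-order eigenvalue perturbation of $L_\Sigma$ along $\gamma + th$ takes the form $\int_\Sigma \langle h, T(\phi, \phi) \rangle \de \Haus^m$ for an explicit $G$-equivariant quadratic tensor-valued expression $T$ (obtained from the variation of the second fundamental form and the induced metric). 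Pairing against a $G$-equivariant $h$ extracts the $G$-invariant component $\frac{1}{\abs G} \sum_{g \in G} T(g_*\phi, g_*\phi)$, which is nonzero: for any real irreducible $G$-representation $V_\rho$, $\operatorname{Sym}^2 V_\rho$ contains the trivial representation with multiplicity one (spanned by the $G$-invariant inner product), so the invariant projection of $\phi \otimes \phi$ is a nonzero multiple of $\abs{\phi}^2$ and is propagated to a nontrivial tensor on $\Sigma$ by the nondegeneracy of $T$. Taking $h$ to be a small $G$-equivariant bump supported near the $G$-orbit of a point where $\abs{\phi}^2 > 0$ then yields a nonzero pairing, moving the eigenvalue off zero. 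Combined with a Sard--Smale argument applied to the natural Banach manifold of pairs $(\gamma, \Sigma)$ with $\gamma \in \Gamma^q_G$ and $\Sigma$ a $G$-equivariant free boundary minimal hypersurface near a fixed basepoint, this yields the density conclusion and completes the proof.
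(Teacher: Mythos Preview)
Your overall architecture aligns with the paper's, which simply transplants the proof of \cite{AmbCarSha18Compactness}*{Theorem~9} to the equivariant setting, replacing $\Gamma^q,\mathcal S^q,\mathcal B^q$ by $\Gamma^q_G,\mathcal S^q_G,\mathcal B^q_G$ and invoking the equivariant Structure \cref{thm:StructureThm}. Both routes must bridge the gap between ``no $G$-equivariant Jacobi field'' (which is what Sard--Smale applied to $\pi^q_G$ yields, by \cref{thm:StructureThm}) and ``no Jacobi field whatsoever'' (which is what $\mathcal B^q_G$ demands); the paper does this by citing \cite{White2017}*{Lemma~2.6}, already written in the equivariant setting and unaffected by the presence of boundary.

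Your attempt to replace White's lemma by a self-contained representation-theoretic argument contains a genuine gap. First, a minor misstatement: a $G$-equivariant perturbation of $L_\Sigma$ acts block-diagonally on the isotypic decomposition but can move eigenvalues in \emph{every} block, not only the trivial one; the real issue is whether $G$-invariant \emph{metric} perturbations generate a rich enough family of such operator perturbations. More seriously, your conclusion that the $G$-average of $T(\phi,\phi)$ is a nontrivial tensor rests on an unproved ``nondegeneracy of $T$''. Knowing that $(\operatorname{Sym}^2 V_\rho)^G$ is one-dimensional tells you the invariant projection of $\phi\otimes\phi$ in the abstract representation is nonzero, but says nothing about whether the concrete tensor-valued quadratic map $T$ (built from $\nabla\phi$, $\nabla^2\phi$, $A$ and curvature) annihilates that line. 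White's actual argument bypasses this by exhibiting an \emph{explicit} family of $G$-invariant metric deformations whose first-order effect on $L_\Sigma$ is multiplication by an arbitrary $G$-invariant potential $f$; the eigenvalue then moves by $\int_\Sigma f\,\abs{\phi}^2$, and one takes $f$ to be a $G$-invariant bump where the manifestly nonnegative, not identically zero, $G$-invariant function $\tfrac{1}{\abs G}\sum_{g\in G}\abs{g_*\phi}^2$ is positive. Your abstract packaging does not replace this computation.

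A secondary issue: your openness argument for $\mathcal B^q_{G,a,\ell}$ appeals to the compactness theory of \cite{AmbCarSha18Compactness}, but those results need an index bound in addition to the area bound, which you have not secured for the offending $\Sigma_n$.
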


\begin{remark}
In \cite{White2017}*{Theorem 2.1}, White proved a stronger result in the closed case, namely that a generic, $G$-equivariant, $C^q$ Riemannian metric on a smooth closed manifold is bumpy in the following sense: no closed, minimal \emph{immersed} submanifold has a nontrivial Jacobi field. We decided to state \cref{thm:BumpyIsGGeneric} only for properly embedded $G$-equivariant submanifolds, since White's generalization requires more technical work and we do not need it here. However a similar proof should work also in the case with boundary.
\end{remark}

Before proceeding to the proof of \cref{thm:BumpyIsGGeneric}, we need to introduce some notation, which are the adaptations to the $G$-equivariant setting to the ones given in \cite{AmbCarSha18Compactness}*{pp. 22}.
Consider a compact, connected, smooth manifold $\Sigma^m$ with boundary and fix any $\alpha\in (0,1)$. For $w\in C^{q-1,\alpha}(\Sigma,M)$, let
\[
[w]\eqdef \{w\circ\varphi \st \varphi\in \operatorname{Diff}(\Sigma)\}.
\]
Then define
\[
\mathcal{PE}^q_G \eqdef \{[w] \st w\in C^{q-1,\alpha}(\Sigma,M) \text{ is a proper embedding with $G$-equivariant image}\}
\]
and
\[
\mathcal{S}^q_G \eqdef \{(\gamma,[w]) \in \Gamma^q_G\times \mathcal{PE}^q_G \st w \text{ is a free boundary minimal proper embedding w.r.t. $\gamma$} \}.
\]
Finally denote by $\pi^q_G\colon \mathcal{S}^q_G\to \Gamma^q_G$ the projector onto the first factor, namely $\pi^q(\gamma,[w]) = \gamma$.

Given these definitions, we can now state the main ingredient to prove \cref{thm:BumpyIsGGeneric}. This is a structure theorem, whose first version was discovered by White in \cite{White1987-Bumpy}, stating the relation between critical points of $\pi^q$ and degenerate minimal hypersurfaces that underlies any bumpy metric theorem.

\begin{theorem} [Structure Theorem, cf. \cite{White2017}*{Theorem 2.3} and \cite{AmbCarSha18Compactness}*{Theorem 35}] \label{thm:StructureThm}
In the setting described above, $\mathcal S^q_G$ is a separable $C^1$ Banach manifold and $\pi^q_G\colon \mathcal{S}^q_G\to \Gamma^q_G$ is a $C^1$ Fredholm map of Fredholm index $0$.
Furthermore, $(\gamma, [w])$ is a critical point of $\pi^q_G$ if and only if $w(\Sigma)$ admits a nontrivial $G$-equivariant Jacobi field.
\end{theorem}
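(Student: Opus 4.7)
The plan is to adapt to the $G$-equivariant setting the strategy of White~\cite{White2017} and Ambrozio--Carlotto--Sharp (cf.\ \cite{AmbCarSha18Compactness}*{Theorem 35}), based on applying the implicit function theorem to a suitable mean-curvature/free-boundary-condition map. Fix a reference pair $(\gamma_0, [w_0]) \in \mathcal{S}^q_G$ and set $\Sigma_0 \eqdef w_0(\Sigma)$. Using a $\gamma_0$-adapted tubular neighborhood of $\Sigma_0$ in $M$ that respects $\partial M$, one parameterizes nearby $G$-equivariant proper embeddings by small $G$-equivariant normal sections $X$ of $N\Sigma_0$ satisfying the tangency $X|_{\partial\Sigma_0}\in T\partial M$. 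Consider then the map
\[
F(\gamma, X) \eqdef \bigl(H_\gamma(w_X(\Sigma)),\ \tau_\gamma(w_X(\Sigma))\bigr),
\]
taking values in a product of $G$-equivariant Hölder spaces on $\Sigma_0$ and $\partial\Sigma_0$, where the first component is the mean curvature vector and $\tau_\gamma$ records the component of the unit conormal $\eta_{w_X}$ normal to $\partial M$ (so $\tau_\gamma \equiv 0$ if and only if $w_X(\Sigma)$ meets $\partial M$ orthogonally). Modulo $G$-equivariant reparametrizations of $\Sigma$, the zero set $F^{-1}(0)$ locally identifies with $\mathcal{S}^q_G$ near $(\gamma_0, [w_0])$, so that the Banach manifold structure on $\mathcal{S}^q_G$ will follow once the implicit function theorem is applied to $F$.

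First I would verify that $F$ is $C^1$ and that the partial differential $\partial_X F(\gamma_0, 0)$ coincides with the free boundary Jacobi operator of $\Sigma_0$ restricted to $G$-equivariant normal sections, i.e., the elliptic Robin problem of \cref{sec:EquivSpectrum}. By \cref{thm:DiscreteSpectrum} (applied on the double cover in the one-sided case, as in \cref{sec:EquivSpectrum}), this operator is self-adjoint and Fredholm of index $0$, with kernel equal to the space of $G$-equivariant normal Jacobi fields of $\Sigma_0$. Once the manifold structure is in place, this immediately yields Fredholmness and index $0$ for $\pi^q_G$, and reduces the critical-point characterization to an identification of $\ker(\partial_X F)$: namely, $(\gamma, [w])$ is a critical point of $\pi^q_G$ precisely when $\ker(\partial_X F(\gamma, 0)) \neq 0$, i.e., when $w(\Sigma)$ admits a nontrivial $G$-equivariant Jacobi field, as claimed.

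The main obstacle is verifying surjectivity of the full differential $DF(\gamma_0, 0)$ modulo $\operatorname{Im}(\partial_X F(\gamma_0,0))$, which amounts to realizing any prescribed $G$-equivariant datum on $\Sigma_0$ (orthogonal to the image of the Jacobi operator) as the first-order effect of a $G$-equivariant metric perturbation. In the nonequivariant case, this is achieved via compactly supported metric variations localized near generic points of $\Sigma_0$, combined with the explicit formulas for the linearized mean curvature and linearized boundary angle under metric perturbations. To equivariantize the argument, given a nonequivariant local variation $h$ supported in a small ball around a regular point $p \in \Sigma_0 \setminus \mathcal{S}$, I would symmetrize by setting $\overline h \eqdef \abs{G}^{-1} \sum_{g \in G} g^* h$. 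Since $p \notin \mathcal{S}$, the translates $\{g(\supp h)\}_{g \in G}$ are pairwise disjoint for sufficiently small support, so $\overline h$ coincides with $h$ on one fundamental domain around $p$ and with its $G$-images elsewhere; because the cokernel to test against consists of $G$-equivariant sections, the relevant $L^2$-pairing on $\Sigma_0$ factors as $\abs{G}$ times the analogous pairing on a single copy, and White's surjectivity computation transfers verbatim. Since $\Sigma_0 \setminus \mathcal{S}$ is open and dense in $\Sigma_0$ (the fixed-point set of any nontrivial orientation-preserving isometry of $M$ having codimension at least two along $\Sigma_0$), this suffices to conclude surjectivity, and hence the implicit function theorem produces the $C^1$ Banach manifold structure on $\mathcal{S}^q_G$. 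Separability of $\mathcal{S}^q_G$ is inherited from $\Gamma^q_G\times\mathcal{PE}^q_G$.
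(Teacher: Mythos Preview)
Your proposal is correct and follows essentially the same approach as the paper: both adapt the White/Ambrozio--Carlotto--Sharp implicit function theorem argument to the $G$-equivariant setting by restricting all function spaces and metric variations to their $G$-equivariant counterparts, with the paper simply pointing to the corresponding propositions in \cite{AmbCarSha18Compactness} while you spell out the surjectivity step via symmetrization of local metric perturbations. One small caveat: in this appendix the paper does not assume that $G$ consists of orientation-preserving isometries, so your density claim for $\Sigma_0\setminus\mathcal{S}$ needs a word of care (for instance, if $\Sigma_0$ lies entirely in a reflection hyperplane then every $G$-equivariant normal section vanishes and the surjectivity and kernel statements become vacuous).
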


\begin{remark}
Theorem 2.3 in \cite{White2017} is the version of the theorem in the case without boundary, while Theorem 35 in \cite{AmbCarSha18Compactness} is the case with boundary but where no group of symmetries is considered.
\end{remark}

\begin{proof}
As observed by White in the proof of \cite{White2017}*{Theorem 2.3}, the proof of the Structure Theorem works the same in the equivariant case replacing ``metric'' with ``$G$-equivariant metric'', ``functions'' with ``$G$-equivariant functions'' and working on a $G$-equivariant hypersurface instead of on any hypersurface. In fact, the reader can follow the proof of the Structure Theorem in the case with boundary contained in \cite{AmbCarSha18Compactness}*{Section 7.2} with these modifications. In particular note that:
\begin{itemize}
\item Here we used $M^{m+1}$ for the ambient manifold and $\Sigma^m$ for the embedded hypersurface, instead of $\mathcal N^{n+1}$ and $M^n$ as in \cite{AmbCarSha18Compactness}.
\item The background metric $\gamma_*$ shall be chosen $G$-equivariant. This way, the exponential map with respect to $\gamma_*$ induces a diffeomorphism between $V^r$ (as defined in \cite{AmbCarSha18Compactness}*{pp. 25}) and a $G$-equivariant open neighborhood of $w(\Sigma)$. Then a class $[w]$ such that $w\in C^{q-1,\alpha}(\Sigma,M)$ is a proper embedding with $G$-equivariant image corresponds to a function $u\in C_G^{q-1,\alpha}(\Sigma,V)$ with $G$-equivariant symmetry (cf. \cref{def:EquivFuncSpaces}). 
\item As observed in \cref{rem:stationaryGstationary}, being $G$-stationary is the same as being stationary. Hence \cite{AmbCarSha18Compactness}*{Proposition 41} does not require modifications.
\item The computations in \cite{AmbCarSha18Compactness}*{Proposition 45} are the same. The only change that we need is in point (2), where the operator $L(\gamma,u)$ should be consider between the spaces $C^{q-1,\alpha}_G(\Sigma,V)$ and $C^{q-3,\alpha}_G(\Sigma,V)\times C^{q-2,\alpha}_G(\partial\Sigma,V)$. Then, similarly as in \cite{AmbCarSha18Compactness}, one can prove that $L(\gamma,u)\colon C^{q-1,\alpha}_G(\Sigma,V)\to C^{q-3,\alpha}_G(\Sigma,V)\times C^{q-2,\alpha}_G(\partial\Sigma,V)$ is a Fredholm operator of Fredholm index $0$.
\item Finally, \cite{AmbCarSha18Compactness}*{Proposition 46} and the final proof of Structure Theorem itself (at \cite{AmbCarSha18Compactness}*{pp. 31}) can be modified accordingly to fit the equivariant setting. Just note that the functions in $\ker L(\gamma, u)$, where $L(\gamma,u)$ is seen as a map between the equivariant spaces, correspond exactly to the $G$-equivariant Jacobi fields.  \qedhere
\end{itemize}
\end{proof}

\begin{proof}[Proof of \cref{thm:BumpyIsGGeneric}]
Given \cref{thm:StructureThm}, the proof is exactly the same as the proof of Theorem 9 in \cite{AmbCarSha18Compactness}*{Section 7.1} substituting $\Gamma^q$, $\mathcal S^q$, $\mathcal B^q$ with $\Gamma^q_G$, $\mathcal S^q_G$, $\mathcal B^q_G$. 
In fact, the only difference in the proofs is hidden in the application of \cite{White2017}*{Lemma 2.6}, which is the point where the Structure Theorem (in the different variants: \cref{thm:StructureThm}, \cite{White2017}*{Theorem 2.3} or \cite{AmbCarSha18Compactness}*{Theorem 35}) is used. However, White in \cite{White2017} works in the equivariant setting as we do here, hence the argument there can be applied in our context without changes, since the presence of the boundary does not play a role in this lemma, as observed also in \cite{AmbCarSha18Compactness}*{pp. 24}.
As said above (in the proof of \cref{thm:StructureThm}), the reader should pay attention to the fact that here we used $M^{m+1}$ for the ambient manifold and $\Sigma^m$ for the embedded hypersurface, instead of $\mathcal N^{n+1}$ and $M^n$ as in \cite{AmbCarSha18Compactness}.
\end{proof}

\bibliography{biblio}

\end{document}